\let\ORIlabel\label
\let\ORIrefstepcounter\refstepcounter
\AddToHook{package/hyperref/before}{%
   \let\label\ORIlabel
   \let\refstepcounter\ORIrefstepcounter
}

\documentclass[onefignum,onetabnum]{siamart220329} 





\usepackage{amsmath}

\usepackage{amsfonts, psfrag, graphicx, mathtools, stackrel, geometry, amssymb,enumitem,indentfirst,float,color,enumerate,graphicx,subcaption,pifont,mathtools,mathrsfs}
\usepackage{pbox}
\usepackage{booktabs, cellspace, hhline}
\usepackage[numbered]{mcode}
\newcommand{\RG}[1]{{\color{black}{#1}}} 
\newcommand{\RGI}[1]{{\color{black}{#1}}} 


\usepackage[hyperpageref]{backref}

\usepackage{comment,multicol,xspace}
\usepackage{makecell}

 \usepackage{booktabs}
 \usepackage{multirow}
 \usepackage{makecell}
 \usepackage{threeparttable}

\numberwithin{equation}{section}
\numberwithin{figure}{section}

\usepackage{etoolbox}
\usepackage[title]{appendix}
\newtheorem{remark}{Remark}[section]
\newtheorem{asump}{Assumption}[section]


\newcommand{\commentout}[1]{{}} 


\newcommand{\bfb}{{\bf b}}

\newcommand{\bfH}{{\bf H}}

\newcommand{\bfn}{{\bf n}}

\newcommand{\bfv}{{\bf v}}

\newcommand{\bfx}{{\bf x}}

\newcommand{\bfsigma}{\boldsymbol{\sigma}}

\newcommand{\mV}{\mathcal V}

\newcommand{\mM}{{\mathcal M}}

\newcommand{\bV}{{\mathbb V}}
\newcommand{\bQ}{{\mathbb Q}}


\usepackage{comment,multicol,xspace}
\usepackage[all]{xy} 
\usepackage{tikz-cd}
\usepackage{adjustbox}
\usepackage{multirow}
\usepackage{array}

\makeatletter
\MHInternalSyntaxOn
\def\MT_leftarrow_fill:{%
  \arrowfill@\leftarrow\relbar\relbar}
\def\MT_rightarrow_fill:{%
  \arrowfill@\relbar\relbar\rightarrow}
\newcommand{\xrightleftarrows}[2][]{\mathrel{%
  \raise.55ex\hbox{%
    $\ext@arrow 0359\MT_rightarrow_fill:{\phantom{#1}}{#2}$}%
  \setbox0=\hbox{%
    $\ext@arrow 3095\MT_leftarrow_fill:{#1}{\phantom{#2}}$}%
  \kern-\wd0 \lower.55ex\box0}}
\MHInternalSyntaxOff
\makeatother

\newcommand{\dd}{\,{\rm d}}

\newcommand{\vertiii}[1]{{\left\vert\kern-0.25ex\left\vert\kern-0.25ex\left\vert #1
    \right\vert\kern-0.25ex\right\vert\kern-0.25ex\right\vert}}
    \newcommand{\vertii}[1]{{\left\vert\kern-0.25ex\left\vert #1
    \right\vert\kern-0.25ex\right\vert}}

\makeatletter
\def\munderbar#1{\underline{\sbox\tw@{$#1$}\dp\tw@\z@\box\tw@}}
\makeatother



\begin{document}

\headers{Inexact projected gradient descent}{}
\title{
Inexact projected preconditioned gradient methods\\
with variable metrics: 
\RGI{a Lyapunov convergence theory} \\
(extended version)
}
\author{
Ruchi Guo \thanks{School of Mathematics, Sichuan University (ruchiguo@scu.edu.cn).}
\and Jun Zou \thanks{Department of Mathematics, The Chinese University of Hong Kong (zou@math.cuhk.edu.hk).} 
\funding{
The work of the first author was supported by National Key R\&D Program of China 2025YFA1018700, NSFC grant 12571436, 
the Fundamental Research Funds for the Central Universities, NSF grant DMS-2012465.
The work of the second author was substantially supported by the Hong Kong RGC General Research Fund (projects 14306623 and 14310324) 
and NSFC/Hong Kong RGC Joint Research Scheme 2022/23 (project N$\_$CUHK465/22).}
}


\date{}
\maketitle
\begin{abstract}
Projected gradient methods are widely used for constrained optimization. 
A key application is for partial differential equations (PDEs), 
where the objective functional represents physical energy and the linear constraints enforce conservation laws.
However, computing the projections onto constraint sets generally requires solving large-scale ill-conditioned systems.
A common strategy is to relax projection accuracy and apply preconditioners, 
which leads to inexact preconditioned projected gradient descent (IPPGD) methods studied here.
Furthermore, variable preconditioners dynamically incorporating updated nonlinear information often enhance convergence rates.
However, due to the complex interplay between inexactness and adaptive preconditioners,
the theoretical analysis and the dynamic behavior of the IPPGD methods still remain quite open.
We propose an effective strategy for constructing the inexact projection operator 
and develop a gradient-type flow to model the resulting IPPGD methods.
Discretization of this flow not only recovers the original IPPGD method but also yields a potentially faster novel method. 
Furthermore, we apply Lyapunov analysis, designing a delicate Lyapunov function, to prove the exponential convergence at the continuous level and linear convergence at the discrete level  \RGI{under certain assumptions}. 
Finally, we validate our approach through numerical experiments, demonstrating computational efficiency.
\end{abstract}

\begin{keywords}
Inexact projected gradient, preconditioning, Lyapunov analysis, nonlinear PDEs, numerical methods for PDEs, nonlinear solver.
\end{keywords}


\section{Introduction}
Given two abstract Hilbert spaces $\bV$ and $\bQ$,
a nonlinear functional $f:\bV \rightarrow \mathbb{R}$, 
and a linear operator $B:\bV\rightarrow \bQ$, we study the constrained optimization problem:
\begin{equation}
\begin{split}
\label{minmax1} 
\min_{u\in \bV} ~ f(u)  ~~~~ \text{subject to} ~~ B u = 0.
\end{split}
\end{equation}
Here $\mathbb{V}$ is equipped with an inner product $\langle \cdot,\cdot \rangle_{\mathbb{V}}$,
but the subscript $\mathbb{V}$ is usually omitted for simplicity.
The gradient $\nabla f(u)$ is formally defined as a linear functional on $\mathbb{V}$, i.e.,
$$
\langle \nabla f(u), v \rangle := \lim_{\epsilon \rightarrow 0} \frac{f(u+\epsilon v) - f(u)}{\epsilon},
$$
given that the limit exists.
As Hilbert spaces are reflexive, $\nabla f$ is identified as an element in $\mathbb{V}$.

Gradient-based methods are widely used in optimization to find critical points, 
but they often converge slowly, 
particularly in ill-conditioned problems such as those arising in numerical PDEs.
To address this issue, two primary approaches have been developed to accelerate convergence.
The first one is to adjust the gradient direction by applying a symmetric positive definite (SPD) operator $M^{-1}$ to
$
\nabla f
$,
where $M$ is a metric and $M^{-1}$ is known as a preconditioner.
With a suitable $M$, the condition number may be significantly reduced; 
see the definition and related discussions around \eqref{CondNum}.
The trivial case $M = I$ simply leads to the standard gradient $\nabla f$, which is computationally straightforward but converges slowly.
Alternatively, $M$ can be chosen as the Hessian matrix of $f$, 
resulting in the projected Newton's method.
This approach is also closely related to the Sobolev gradient method \cite{2020PatrickDaniel},
where $M^{-1}\nabla f$ can be interpreted as the Riesz representative of $\nabla f$ within a subspace of $\mathbb{V}$. 
In the following discussion, we employ the notation of $\langle u, v \rangle_M : = \langle u,  M v \rangle$ and $\| u \|^2_M = \langle u, u \rangle_M$.

Constrained optimization problems are often addressed via Projected Gradient Descent (PGD) methods, 
a class of iterative methods that enforce constraints while descending along the gradient direction
\cite{1981Botsaris,2021GaoSonAbsilStykel,2020PatrickDaniel,2010PARIMAHMICHAEL,2009ShikhmanStein}.
When coupled with preconditioners such as $M^{-1}$,
the projections $P_M:\mathbb{V}\rightarrow \ker(B)$ are typically defined with respect to the metric $M$ to ensure convergence:
\begin{equation}
\label{soblev_grad_3}
\langle P_{M} u,v \rangle_M = \langle u,v \rangle_M, ~~~~~ \forall v \in \ker(B).
\end{equation}
It is well known that the first-order optimality condition tells
\begin{equation}
\label{opt_cond}
\langle \nabla f(u^{\star}), v \rangle = 0, ~~ \forall v \in \ker(B) ~~~~ \Longleftrightarrow ~~~~  P_M M^{-1} \nabla f(u^{\star})= 0.
\end{equation}
To fully capture the nonlinear system information, preconditioners should be updated dynamically.
Then, given a sequence of metrics $\{M_k\}_{k\ge 0}$, the Projected Preconditioned Gradient Descent (PPGD) method reads as
\begin{equation}
\label{PGD0}
u_{k+1} = P_{M_k} ( u_k - \alpha_k M^{-1}_k \nabla f(u_k) ).
\end{equation}
This classical method \eqref{PGD0} can be traced back to the early work in \cite{1964Goldstein,1966LevitinPolyak,1967Polyak}, 
often referred to as the Goldstein-Levitin-Polyak method,
which has been extensively studied in the literature \cite{1976Bertsekas,1960Rosen,1961Rosen,1981Dunn,1974Goldstein}.
Moreover, we highlight that \eqref{PGD0} can be regarded as a special case of the Spectral Projected Gradient methods with variable preconditioners \cite{2009BirginMartnezRaydan,2000BirginMartnezRaydan,2003BirginMartnezRaydan,2014BirginMartnezRaydan} in the sense that the search direction $d_k = -P_{M_k} M^{-1}_k \nabla f(u_k)$ is exactly the minimizer of the following subproblem:
\begin{equation}
\label{subproblem}
\min_{d\in \ker(B)} Q_k(d): = \frac{\| d \|^2_{M_k}}{2} + \langle \nabla f(u_k), d \rangle.
\end{equation}
Additionally, we also refer readers to the Lagrange multiplier methods, such as primal-dual methods or Uzawa-type methods \cite{2006Bacuta,2000BramblePasciakVassilev,uzawa1958iterative}.
In particular, inexact Uzawa methods have also been extensively investigated for accelerating the convergence in the literature; 
see \cite{1997BramblePasciakVassilev,chen1998global,cheng2003inexact,2001HuZou,2002HuZou,2007HuZou} for instance.

Although appropriate preconditioners can significantly reduce the condition number, 
solving several large-scale linear systems at each iteration is still computationally expensive. 
Indeed, it is often impractical and unnecessary to compute the exact projection at every iteration,
which motivates the idea of the inexact preconditioned projected gradient descent (IPPGD) methods.
A closely related concept, the Inexact Spectral Projected Gradient methods, has been explored in the literature \cite{2000BirginMartnezRaydan,2005AndreaniBirginMartnez, 2022DouglasMaxTiago, 2009GomesSantos, 2006WangLiu}.
The inexact oracle method described in \cite{2014DevolderGlineurNesterov} serves as another significant methodology closely related to our current study.
However, the analysis of all these methods can be challenging; see the discussions around \eqref{diffi} below and Section \ref{sec:flow}.

Inexact projections are typically implemented by solving the subproblem \eqref{subproblem} through iterative methods with a limited number of inner iterations;
see the semi-smooth Newton-CG method \cite{2012JiangSunToh} and
Dykstra's algorithm \cite{1983Dykstra,2003BirginMartnezRaydan} for instance.
\RGI{In this study, we develop a convergence theory based on a delicate inexact projection operator $\widetilde{P}_{\mM}$.}
It is constructed via Schur complement approximation:
$\mM$ denotes a metric set $\mM=\{ M, \widetilde{S} \}$ for the projection metric $M$ and the Schur complement approximation $\widetilde{S}$, i.e., $\widetilde{S}\approx S:= B M^{-1}B^T$, with $M$ and $\widetilde{S}$ 
\RGI{both being} symmetric positive definite (SPD) linear operators.
See Section \ref{subsec:ipp_operator} for the detailed discussion.
This operator can collectively integrate the preconditioning and inexactness mechanism.
Given a sequence of \RG{metric sets} $\{\mM_k\}_{k\ge 0}$ and time steps $\{\alpha_k\}_{k\ge 0}$, 
mimicking \eqref{PGD0}, we obtain a natural IPPGD method:
\begin{equation}
\label{PPGD0}
u_{k+1} = \widetilde{P}_{\mM_k} ( u_k - \alpha_k M^{-1}_k \nabla f(u_k) ).
\end{equation}
But our theoretical analysis and numerical experiments both suggest that this choice may not be optimal.
With the tool of  ordinary differential equations (ODEs), by studying the dynamics at the continuous level,
we propose a novel method given in \eqref{PPGD1_disc} that admits faster convergence.



While inexactness can enhance computational efficiency, its analysis often presents substantial challenges.
\RGI{Convergence theory regarding variable preconditioners for inexact projected preconditioned gradient methods remains relatively limited.}
For instance, global linear convergence was achieved in \cite{2018PatrascuNecoara}, but only under the assumption of a fixed trivial identity metric.
The research in \cite{2023AguiarFerreiraPrudente,2022FerreiraLemesPrudente} 
studied a feasible inexact projection, 
proving the sublinear convergence also for the identity metric.
To the best of the authors' knowledge, 
existing theoretical frameworks for these methods are inherently restricted to non-variable metrics. 
For general applications, variable metrics are needed to achieve better preconditioning effects, 
but the analysis would then be much more involved.
In \cite{2003BirginMartnezRaydan,2005AndreaniBirginMartnez}, 
the authors considered an inexact spectral PGD method
with the variable metric approximating the Hessian matrix, i.e., the quasi-Newton method,
but their analysis relies on exact projections.
The authors in \cite{2022DouglasMaxTiago,2006WangLiu} proved the global convergence, 
yet the rate of convergence remains open.
In \cite{2012JiangSunToh},
the optimal rates for non-smooth problems were established under two critical assumptions:
 (i) monotonic Loewner-order decrease of the metric operators and (ii) corresponding decay of projection inexactness.
These conditions are often unattainable for many PDE-related problems.

\begin{comment}
In summary, the IPPGD method introduces several questions that the current theories have yet to adequately address: 

\begin{itemize}

\item The first question concerns the ODE model of the IPPGD method. 
What is the appropriate flow to effectively describe and analyze the algorithms?
How can Lyapunov analysis help in estimating convergence at both the continuous and discrete levels, 
particularly when dealing with variable metrics? 

\item Our numerical experiments suggest that the accuracy of the IPPGD method can be controlled by the step size, 
even while maintaining a substantially large inexactness level $\delta$. 
Can we give a precise mathematical description of the point that the sequence converges to?
 
\end{itemize}
The last item above could be particularly advantageous in numerical solutions of PDEs. 
Here, a larger step size might facilitate quicker convergence while permitting certain error in the solution,
which can be adjusted to align with the mesh size.
\end{comment}

In this work, we resort to ODE models and Lyapunov analysis to show the optimal linear convergence of the IPPGD method.
The approach of analyzing optimization algorithms through ODEs has gained wide attention, 
especially for acceleration methods \cite{2022LuoChen,2022ShiDuJordanSu,2016SuBoydCandes}.
Concurrently, Lyapunov analysis is increasingly recognized as a critical tool in the optimization community,
as it can offer a systematic path to quantify stability and convergence \cite{chen2023transformed, 2025PaulJesusKonstantinos,2023Luo, 2022LuoChen, 2017PolyakShcherbakov, 2021SanzZygalakis,2021WilsonRechtJordan}.
The application of Lyapunov analysis to saddle point systems can be found in \cite{2024ChenGuoWei,2026ZengZhanGuoWei}.
However, to the best of our knowledge, 
little on the potential of these approaches for projected preconditioned methods has been explored in the literature.
Designing an appropriate flow and a suitable Lyapunov function usually remains a significant challenge,
which, together with the difficulties in analysis, is discussed in detail in Section \ref{subsec:prelimin}.

Our contributions in this work are multifold.
\RGI{We first modify an existing ODE model in the literature \cite{1973TANABE,1980Tanabe} such that the resulting flow can capture the dynamics of the IPPGD method in \eqref{PPGD0},
appropriately handling inexactness and variable metrics.
Moreover, discretizing this flow not only recovers the original IPPGD method but also produces a faster novel method benefiting from a relaxation parameter.
By systematically integrating this flow and the inexact projection with variable metrics, 
we rigorously establish a so-called strong Lyapunov property at both the continuous and discrete levels (see Theorems \ref{thm_str_cont_lya} and \ref{thm_lyapunov_disc}). The notion of a strong Lyapunov function has long been established in the dynamical-systems and control-theory literature \cite{1998ClarkeLedyaevStern,2004MazencNesic}, and related strong Lyapunov-type properties have subsequently been used to analyze optimization algorithms \cite{2022LuoChen,2021SanzZygalakis,2022ShiDuJordanSu,2016SuBoydCandes}.
}
Furthermore, our theoretical analysis and numerical experiments suggest that the accuracy of the IPPGD method can be controlled by not only the inexactness level but also the step size.
Specifically, it converges to a solution that retains certain approximation accuracy, even while accommodating a significantly large inexactness level $\delta$. 
It could be particularly advantageous in numerical solutions of Partial Differential Equations (PDEs):
a larger step size may promote faster convergence, 
allowing for a manageable error in the solution that can be tailored to match the mesh size.

We defer the literature review of the related studies regarding the conventional PGD flow to the next section
along with an introduction to the proposed inexact projection operator.
In Section \ref{sec:inexact_est}, we prepare some useful estimates regarding the inexact projection \RG{operators
and show} the existence and uniqueness of the equilibrium.
In Sections \ref{sec:conv_contin} and \ref{sec:conv_disc} respectively, we prove exponential convergence at the continuous level and the linear convergence at the discrete level.
In Section \ref{subsex_exam}, we present some application of IPPGD methods and numerical results.
We conclude in the last section.

\section{The proposed flow and inexact projection}
\label{sec:flow}

In this section, we discuss the existing work for projected gradient flow and propose our flow to deal with the inexactness and variable preconditioners.
Then, we introduce a special inexact projection operator that is suitable for nonlinear PDEs.

\subsection{The flow for the IPPGD method}
 
Continuous flows often provide a deeper understanding of the mechanism and dynamics underlying discrete iterations. 
There is a long history of studying optimization algorithms through the lens of ODEs;
see the early work in \cite{1987Amaya,1978Botsaris}.
\RG{In particular, ODE models have been widely used to analyze projection-type methods, but 
only for the exact PGD methods \cite{1973TANABE,1980Tanabe, 
2020PatrickDaniel,2010PARIMAHMICHAEL}. 
For more practical and more efficient numerical realizations, it would be natural to incorporate 
preconditioning and inexact gradient projections in the PGD-type methods (IPPGDs). 
But then the convergence analysis, especially the rate of convergence, of the IPPGDs would be 
much more technical and challenging, compared to the analysis of exact PGD methods, 
as some effective variable metrics must be introduced 
in the analysis. This may be why there are still no (rigorous) studies in the literature despite the significance 
of the topic in both mathematical understanding and numerical solutions of constrained optimization problems and 
PDEs, which forms the main motivation and focus of this work.
To do so, a natural and fundamental question arises: What is the appropriate and effective ODE 
to model the dynamics of the projected gradient flow when preconditioning, inexact gradient projection 
and variable metrics are incorporated in IPPGDs?
}

\RG{
It is clear that such a flow should meet two requirements: (1) it should effectively model the preconditioning and 
inexactness involved, and (2) it should recover the IPPGD method of interest in the discrete setting.}
For the case of exact projections, the most natural choice is \cite{1973TANABE,1980Tanabe}
\begin{equation}
\label{soblev_grad_2}
u' = - P_{M(t)} M^{-1}(t) \nabla f(u).
\end{equation}
This flow has also been employed in \cite{2020PatrickDaniel,2010PARIMAHMICHAEL}
for solving Gross–Pitaevskii eigenvalue problems, 
where $P_M$ is a \RG{projection onto the surface of a sphere}, admitting a simple closed form for computation.
We also refer readers to the related discussions in \cite{1981Botsaris,2021GaoSonAbsilStykel,2009ShikhmanStein}.
When the trajectory initiates from an infeasible point, 
Tanabe in \cite{1973TANABE,1980Tanabe} slightly modified the flow by adding a term involving the constraint,
making the trajectory gradually move towards the feasible manifold,
resulting in
\begin{equation}
\label{PGD1}
u'(t) + u(t) - P_{M}( u(t) -  M^{-1} \nabla f(u(t))) = 0,
\end{equation}
which was then studied by Yamashita in \cite{1980Hiroshi}, 
Evtushenko-Zhang in \cite{1994EvtushenkoZhadan}
and Schropp-Singer in \cite{2000SchroppSinger}.
We refer readers to a comprehensive review article in \cite{1989BrownBartholomew}.
Among all the aforementioned works, to our best knowledge, 
only \cite{2020PatrickDaniel,1980Hiroshi} include variable projection metrics in their studies, i.e., $M=M(t)$ in \eqref{PGD1}.
However, there appears to be no existing research that uses the ODE \eqref{PGD1} to examine inexact projection methods,
even though Tanabe's work is close to this topic.
In fact, we will see later that \eqref{PGD1} is not very suited for this purpose.

The original flow in \eqref{soblev_grad_2} is certainly not suitable for modeling inexact projection methods.
To see this, let us replace $ P_{M}$ by $\widetilde{P}_{\mM}$.
Notice that $\widetilde{P}_{\mM}$ may be even invertible, provided with only a tiny perturbation to $P_{M}$.
Then the equilibrium point of the flow \eqref{soblev_grad_2} simply satisfies $\nabla f = 0$, certainly not the true minimizer.
Furthermore, \eqref{PGD1} also fails to model inexact projections.
To see this, let us employ a forward Euler method to discretize \eqref{PGD1} and obtain
\begin{equation}
\label{PGD1_disc}
u_{k+1} = (1-\alpha_k)u_k +  \alpha_k\widetilde{P}_{\mM_k} u_k - \alpha_k \widetilde{P}_{\mM_k} M^{-1} \nabla f(u_k)
\end{equation}
with a step size $\alpha_k$,
which unfortunately does not recover \eqref{PPGD0} as $u_k \neq \widetilde{P}_{\mM}u_k$.



\RGI{Inspired by \eqref{PGD1}, one key contribution of the present work is to recast \eqref{soblev_grad_2} into the following ODE} to investigate the dynamics of the IPPGD method in \eqref{PPGD0}:
\begin{equation}
\label{PPGD1}
u'(t) + u(t) - \widetilde{P}_{\mM(t)}( u(t) - \alpha(t) M^{-1}(t) \nabla f(u(t))) = 0,
\end{equation}
where we highlight that the projection metric $\mM(t)$ is time-dependent.
This model integrates the step size $\alpha(t)$ directly at the continuous level, 
setting it distinguished from existing dynamical models.
Comparing \eqref{PPGD1} with \eqref{soblev_grad_2},
there is an extra term $(\widetilde{P}_{\mM} - I ) u(t)$ in \eqref{PPGD1}
precisely designed to accommodate the inexactness. 
Specifically, it allows for a precise definition of the limit of the IPPGD method and facilitates the estimation of its accuracy relative to the true minimizer in terms of the step size and the inexactness level, which will all be discussed in Section \ref{subsec:equilibrium} in detail.

Let us discretize \eqref{PPGD1} by a simple forward Euler method with the step size $\tau_k$:
\begin{equation}
\label{PPGD1_disc}
u_{k+1} = (1-\tau_k)u_k + \tau_k \widetilde{P}_{\mM_k} (u_k -  \alpha_k M^{-1}_k  \nabla f(u_k)).
\end{equation}
Then, it is not hard to see that \eqref{PPGD1_disc} with $\tau_k=1$ recovers the original method in \eqref{PPGD0},
and this is a result not achievable by \eqref{PGD1_disc}.
In fact, \eqref{PPGD1_disc} produces a novel algorithm by incorporating the new parameter $\tau_k$.
Such a flow in \eqref{PPGD1_disc} was originally developed in \cite{1989Antipin} by Antipin
and subsequently analyzed in a series of works \cite{2003Antipin,2019May};
but in all these works, $\widetilde{P}_{\mM(t)}$ is selected as the exact projection $P_{M}$ for a fixed metric $M$.
To the best of our knowledge, its benefits for analyzing inexact projection methods have not been fully recognized by the community.
Surprisingly, we are able to give an explicit bound for $\tau_k$ in terms of $\alpha_k$ and inexactness parameters,
which can recover the case of $\tau_k=1$.
See the main Theorem \ref{thm_lyapunov_disc} for details.
Thus, with the ODE tool, we actually obtain the novel faster method \eqref{PPGD1_disc} while keeping the same computational cost. 
Our findings suggest that ODE is not only a theoretical tool for analysis but also produces more stable and efficient algorithms.

\subsection{The inexact projection operator}
\label{subsec:ipp_operator}
In this subsection, we develop our inexact projection operator for linearly constrained optimization problems 
which may be particularly beneficial for the numerical solution and analysis of nonlinear PDEs.
Recall the exact projection $P_M$:
\begin{equation}
\label{soblev_grad_4}
P_{M} = I - M^{-1}B^TS^{-1}B, ~~~ \text{with} ~ S = B {M}^{-1}B^T ~ \text{being the Schur complement}.
\end{equation}
Computing $P_M$ requires solving at least three large-scale linear systems: two $M^{-1}$ and one $S^{-1}$.
In general, the structure of $S$ could be quite complicated, 
making the computation of $S$, let alone its inverse, significantly expensive.
Many PDE-related problems can be written into constrained optimization where $B$ represents a differential operator;
see the example in Section \ref{subsex_exam},
making $S$ an elliptic-type differential operator. 
In numerical PDEs, there are many approaches to approximate its inverse, known as preconditioners \cite{1990BramblePasciakXu,2006HiptmairWidmerZou,2017XuZikatanov}.
This consideration naturally suggests the development of the following inexact projection operator:
\begin{equation}
\label{soblev_grad_5}
\RG{\widetilde{P}_{\mM} := I - {M}^{-1}B^T\widetilde{S}^{-1}B,} 
\end{equation} 
\RGI{where $\widetilde{S}^{-1}$ is a linear SPD operator approximating $S^{-1}$.
As both $\widetilde{S}$ and $S$ are SPD, $\widetilde{S}$ itself also results in an approximation to $S$, 
which can be explicitly estimated by \eqref{spd_inequa}.
}
\RG{The metric set $\mM=\{M,\widetilde{S}\}$ emphasizes that the inexactness mainly comes from the approximation $\widetilde{S}$ of the Schur complement $S$; 
as illustrated in more detail below.}

\RG{Generally speaking, there are mainly two sources of inexactness: the inexact computation of $M^{-1}$ and $S^{-1}$.
But we shall treat these two approximations in a unified manner, 
namely it suffices to treat the approximation of $S^{-1}$ only. 
Let $\widetilde{S}^{-1}$ be an approximation of $S^{-1}$.
Then we argue that the inexactness in $M^{-1}$ can be subsumed into the approximation of 
$S^{-1}$ by $\widetilde{S}^{-1}$ for analysis. 
To see this, consider an approximation of $M^{-1}$ by $\widetilde{M}^{-1}$, then 
the Schur complement corresponding to $\widetilde{M}^{-1}$ would be 
$B\widetilde{M}^{-1}B^T$, and we readily have the following estimate by the triangle inequality
\begin{equation}
\label{SMApprox}
\| \widetilde{S} - B\widetilde{M}^{-1}B^T \| 
\le \| \widetilde{S} - S \| + \| B (M^{-1} - \widetilde{M}^{-1})B^T \|
\end{equation}
for any matrix norm.
As $\widetilde{S}$ already approximates $S$,
\eqref{SMApprox} indicates that $\widetilde{S}$ serves also as an approximation to $B\widetilde{M}^{-1}B^T$ given that $\widetilde{M}$ approximates ${M}$ well.
Therefore, from the perspective of analysis, 
the algorithm can be recast into using $\widetilde{M}$ as the projection metric, 
with $\widetilde{S}$ serving as an approximation to the exact Schur complement $B\widetilde{M}^{-1}B^T$.
Hence, it is sufficient to use the Schur complement to capture all the inexactness.
Under this setting, the exact projection is achieved when $S= \widetilde{S}$.
In practice, one may employ direct or iterative methods to approximate $M^{-1}$ and $\widetilde{S}^{-1}$ up to certain accuracy;
see Section \ref{subsex_exam_subs2} for more details and an illustration with a specific example.
}

In addition, it is not hard to see the following properties:
\begin{subequations}
  \begin{align}
&P_M \widetilde{P}_{\mM} = \widetilde{P}_{\mM} P_M = P_M, \label{PPi} \\
&M {\widetilde{P}}_{\mM}  = {\widetilde{P}}^T_{\mM} M, ~~~~~ M P_M  = P^T_{M} M. \label{proj_Pi_eq1} 
  \end{align}
\end{subequations}
For simplicity of notation, we introduce the notation 
\RG{\begin{equation}
\label{lem_proj_id1_eq02}
\widetilde{\nabla}_{\mM}:= \widetilde{P}_{\mM} M^{-1}\nabla
\end{equation}
which can be understood as a modified gradient,
and differs from the exact projected gradient owing to the inexact projection $\widetilde{P}_{\mM}$.}
With \eqref{proj_Pi_eq1}, it is not hard to see
\begin{equation}
\label{lem_proj_id1_eq01}
\langle \widetilde{\nabla}_{\mM} f(u), v \rangle_{M}  = \langle \nabla f(u), \widetilde{P}_M v \rangle, ~~~~~ \forall u,v \in \bV.
\end{equation}
Such a way to construct the inexact projection is different from simply solving \eqref{subproblem} by iterative methods to limited accuracy, 
and it also takes advantage of the operator’s structure.

\section{Preliminaries for the analysis}
\label{subsec:prelimin}
In this section, we provide some basic estimates for the subsequent analysis and also outline the main developments 
in the analysis.

\subsection{Convexity and Lipschitz properties with preconditioners}

For any proper closed convex and $C^1$ function $f: \bV \rightarrow \mathbb{R}$, 
we define the Bregman divergence of $f$ as
$$D_{f}(u,v) : = f(u)-f(v)- \langle  \nabla f(v), u-v\rangle.$$
Then, the Lipschitz continuity and convexity with respect to the $M$-norm can be characterized as
\begin{subequations}
\begin{align}
    & \frac{\mu_{f,M}}{2}\left\|u-v\right\|^{2}_{M} \le D_{f}(u, v) \le \frac{L_{f,M}}{2}\left\|u-v\right\|^{2}_{M},
 \label{conv_Lip}  \\
    &  \langle \nabla f(u) - \nabla f(v), u-v \rangle \ge \frac{1}{L_{f,M}+\mu_{f,M}} \| \nabla f(u) - \nabla f(v) \|^2_{M^{-1}} + \frac{L_{f,M}\mu_{f,M}}{L_{f,M}+\mu_{f,M}} \| u - v \|^2_{M},
     \label{conv_Lip_new}
\end{align}
\end{subequations}
$\forall u, v  \in \bV$;
see the detailed discussions in \cite{2003Nesterov,2006Micheal}.
With the convexity and Lipschitz constants $\mu_{f,M}$ and $L_{f,M}$, we can define a condition number for $f$:
\begin{equation}
\label{CondNum}
\kappa_{f,M} = \frac{L_{f,M}}{\mu_{f,M}}.
\end{equation}
Furthermore, we have the following bounds \cite{2003Nesterov,2006Micheal}:
\begin{equation}
\label{eq: Breg bound}
     \frac{1}{2L_{f,M}} \|\nabla f(u) - \nabla f(v)\|^2_{M^{-1}} \leq D_f(u,v) \leq   \frac{1}{2\mu_{f,M}} \|\nabla f(u) - \nabla f(v)\|^2_{M^{-1}}, \quad \forall  u, v \in \bV.
\end{equation}

\RG{
\subsection{Outline of proof}
First, we summarize some key difficulties in the analysis.
The inexactness arising from the approximation 
$\widetilde{S}$ of the Schur complement $S$ diminishes many desired properties of projections, such as 
\begin{equation}
  \label{diffi}
\mathcal{R}(\widetilde{P}_{\mM}) \not\subset \ker(B), ~~~~~~ \widetilde{P}_{\mM}^2 \neq \widetilde{P}_{\mM}, ~~~~~~ \langle \widetilde{P}_{\mM}v -v, w \rangle_M \neq 0, ~~ \forall w\in \mV,
\end{equation}
where $\mathcal{R}$ denotes the image of an operator.
\eqref{diffi} may cause the trajectory of the governing ODE flow to deviate from the constraint set.
Moreover, the inexact projections interplaying with variable preconditioners significantly complicate the dynamics of the iterative algorithm. 
To see this, we note that the metric $M_k = M_k(u_k)$ is usually constructed according to the most recent iterate $u_k$ at each step,
and, consequently the limit of the IPPGD iteration \eqref{PPGD0} inevitably depends on $\lim_{k\rightarrow \infty} \mM_k$,
yet the sequence $\{\mM_k\}$ lacks a priori guarantees of convergence.
This inner dependence between $\{u_k\}$ and $\{M_k\}$ poses significant challenges in analysis,
as their convergence properties are inextricably linked.

To address these technical issues, we resort to Lyapunov analysis by designing a special Lyapunov function.
Given the interplay between inexactness and variable preconditioning metrics involved, 
an effective Lyapunov function for convergence analysis must exhibit two essential characteristics:
(i) independence from the variable metric to avoid some intractable complications during differentiation, 
and (ii) the ability to manage the trajectory's deviation from the constraint set. 

More specifically, let us suppose that the proposed flow \eqref{PPGD1} admits a unique stationary point, $u^{\star}_{\phi}$, which may be different from the true minimizer $u^{\star}$.
One straightforward Lyapunov function would be $\| u - u^{\star}_{\phi} \|^2_M$, but this appears to be undesirable here, 
as its derivative inevitably involves $M'(t)$ that is hard to manipulate while further complicating the analysis process.
Instead, we shall consider the following Lyapunov function
\begin{equation}
  \begin{split}
\label{cont_Lyap}
&\mathcal{E}(t)  =  \lambda \alpha \mathcal{E}^{(1)}(u(t)) + \mathcal{E}^{(2)}(u(t)), \\
\text{with} ~~ & \mathcal{E}^{(1)}(u) := D_f(u,u^{\star}_\phi)~~ \text{and} ~~ \mathcal{E}^{(2)}(u): = \frac{1}{2}\| (I - \widetilde{P}_{\mM_{\star}})(u - u^{\star}_{\phi}) \|^2_{M_{\star}} ,
  \end{split}
\end{equation}
where $\lambda$ is a positive constant to be specified later.
Notice that $\mathcal{E}^{(1)}$ replaces the usual $\| u - u^{\star}_{\phi} \|^2_M$ without involving the variable metric,
while the second term $\mathcal{E}^{(2)}$ exhibits a highly atypical nature, specially designed for handling the inexact projections.
In particular, for exact projections, it is not hard to see that $\mathcal{E}^{(2)}$ vanishes,
which reduces the algorithm and analysis into the standard scenario.
In fact, due to this vanishing property, we may expect that $\mathcal{E}^{(2)}$ is of a small higher-order quantity compared to $\mathcal{E}^{(1)}$.
But to appropriately balance these two terms, 
we introduce two scaling coefficients $\lambda$ and $\alpha$ for $\mathcal{E}^{(1)}$.

The analysis below is built to make the intuitive observations above rigorous.
In Assumption \ref{asump_M}, we shall introduce some quantities $\delta(t)$ to measure the inexactness and $\Theta_m(t)$ to measure how the variable metrics $M(t)$ and $\widetilde{S}(t)$ deviate from the final ones $M_{\star}$ and $\widetilde{S}_{\star}$. Then, the goal is to establish the following inequalities:
\begin{subequations}
\label{cont_Lyap_eq1}
\begin{align}
    & \frac{\dd }{\dd t}\mathcal{E}^{(1)}  \le   C\left( -\alpha \mathcal{E}^{(1)} 
+  \alpha^{-1} \mathcal{E}^{(2)} +  \alpha \delta^2 \Theta^2_m \right),  \label{cont_Lyap_eq11} \\
    &  \frac{\dd }{\dd t}\mathcal{E}^{(2)}  \le  C \left( -\mathcal{E}^{(2)} + (\alpha\delta)^2\mathcal{E}^{(1)} + \alpha^2 \delta^2 \Theta^2_m \right),
\end{align}
\end{subequations}
where $C$ represents some generic positive constants and its dependence on Lipschitz and convexity constants will be specified clearly during detailed analysis.
Given \eqref{cont_Lyap_eq1} with $\lambda$ and $\delta$ small enough, 
it is not hard to see that the Lyapunov function \eqref{cont_Lyap_eq1} admits the decay property
\begin{equation}
\label{cont_Lyap_eq2}
\frac{\dd}{\dd t}\mathcal{E}  \le   - \omega  \mathcal{E}
\end{equation}
for a suitable positive constant $\omega$ depending on the generic constants in \eqref{cont_Lyap_eq1}.

With this framework in place, our analysis concentrates on the following questions:
\begin{itemize}

\item Does $u^{\star}_\phi$ uniquely exist, and how does it differ from the true minimizer $u^{\star}$?

\item How can we establish the key inequalities in \eqref{cont_Lyap_eq1}?

\item How can the continuous Lyapunov analysis be extended to the discrete setting?

\end{itemize}
These questions will be addressed in the next three sections.
Through the analysis, we shall obtain explicit relations among $\delta$, $\Theta_m$, $\lambda$ and $\alpha$ for ensuring the desirable convergence.

}



\section{Inexactness estimates and equilibrium}
\label{sec:inexact_est}
This section provides fundamental estimates for the inexact projection, 
then shows a unique equilibrium point for the ODE model \eqref{PPGD1}.

\subsection{Inexactness estimates}
To begin with, for two symmetric linear operators $Q$ and $R$, we denote $Q \preccurlyeq R$ by $R - Q$ being positive semidefinite.
If $Q$ and $R$ are SPD, we have the well-known property: 
\begin{equation}
\label{spd_inequa}
c_1 Q \preccurlyeq R \preccurlyeq c_2 Q ~~~~ \Longleftrightarrow ~~~~~ \lambda(Q^{-1}R) \in [c_1,c_2].
\end{equation}
The following result will be frequently used throughout this work.
\begin{lemma}
\label{lem_SPD_general}
For two linear SPD operators $Q$ and $R$ with $c_1 Q \preccurlyeq R \preccurlyeq c_2 Q$, $c_1,c_2>0$, we have 
\begin{equation}
\label{ds}
(Q^{-1} - R^{-1}) R (Q^{-1} - R^{-1}) \preccurlyeq \max\{(1-c_1)^2,(1-c_2)^2\} R^{-1}.
\end{equation}
\end{lemma}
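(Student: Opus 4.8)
The plan is to reduce the operator inequality \eqref{ds} to a purely spectral statement via a symmetric congruence with $R^{1/2}$. Since $R$ is SPD its square root $R^{1/2}$ exists and is SPD, so writing $A := (Q^{-1}-R^{-1})R(Q^{-1}-R^{-1})$ and $c := \max\{(1-c_1)^2,(1-c_2)^2\}$, the claim $A \preccurlyeq c R^{-1}$ is equivalent, after conjugating both sides by $R^{1/2}$, to $R^{1/2} A R^{1/2} \preccurlyeq c\, I$.

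The key algebraic observation I would exploit is that this conjugated left-hand side is a perfect square. Inserting $R = R^{1/2}R^{1/2}$ between the two outer factors of $A$ gives
\[
R^{1/2} A R^{1/2} = \big( R^{1/2}(Q^{-1}-R^{-1})R^{1/2} \big)^2 = B^2, \qquad B := R^{1/2} Q^{-1} R^{1/2} - I,
\]
where $B$ is symmetric. Hence it suffices to bound the spectrum of $B$: for a symmetric operator, $B^2 \preccurlyeq c\, I$ holds precisely when all spectral values of $B$ lie in $[-\sqrt{c},\sqrt{c}]$.

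Next I would locate the spectrum of $C := R^{1/2} Q^{-1} R^{1/2}$. The point is that $C$ is similar to $Q^{-1}R$ through $C = R^{1/2}(Q^{-1}R) R^{-1/2}$, so the two share the same spectrum, while $C$ itself is SPD and thus has real spectrum. By the equivalence \eqref{spd_inequa}, the hypothesis $c_1 Q \preccurlyeq R \preccurlyeq c_2 Q$ yields $\lambda(Q^{-1}R)\in[c_1,c_2]$, hence $\lambda(C)\in[c_1,c_2]$ and $\lambda(B)\in[c_1-1,c_2-1]$. Consequently every spectral value of $B$ has modulus at most $\max\{|1-c_1|,|1-c_2|\} = \sqrt{c}$, which gives $B^2 \preccurlyeq c\, I$. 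Conjugating this back by $R^{-1/2}$ on both sides restores $A \preccurlyeq c R^{-1}$, which is exactly \eqref{ds}.

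I do not expect a genuine obstacle; the argument is short once the right congruence is chosen. The one point requiring care is that $Q^{-1}R$ is generally not self-adjoint, so one cannot speak of its spectrum lying in an interval through an orthogonal diagonalization directly. This is why I route the spectral argument through the self-adjoint operator $C$ and use similarity only to transfer the spectral inclusion supplied by \eqref{spd_inequa}. In an infinite-dimensional Hilbert space I would phrase the eigenvalue statements as statements about the spectrum via the functional calculus, but the reasoning is otherwise unchanged.
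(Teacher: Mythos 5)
Your proof is correct and follows essentially the same route as the paper's: both recognize that the left-hand side of \eqref{ds} is a perfect square---the paper writes $RA=(RQ^{-1}-I)^2$ for $A=(Q^{-1}-R^{-1})R(Q^{-1}-R^{-1})$, you write $R^{1/2}AR^{1/2}=\bigl(R^{1/2}Q^{-1}R^{1/2}-I\bigr)^2$---and both reduce the claim to the spectral inclusion $\lambda(Q^{-1}R)\in[c_1,c_2]$ supplied by \eqref{spd_inequa}. The only difference is one of execution rather than idea, though it is a welcome one: your congruence by $R^{1/2}$ keeps every operator self-adjoint, so the bound $B^2\preccurlyeq \max\{(1-c_1)^2,(1-c_2)^2\}\,I$ follows at once from the spectral theorem, whereas the paper manipulates the non-self-adjoint product $RA$, tacitly invokes the spectral mapping argument for it, and then applies \eqref{spd_inequa} once more to return to the operator inequality (its line ``$\lambda(RA)\in\max\{(1-c_1)^2,(1-c_2)^2\}$'' should read ``$\lambda(RA)\le\max\{(1-c_1)^2,(1-c_2)^2\}$'').
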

\begin{proof}
Let $A = (Q^{-1} - R^{-1}) R (Q^{-1} - R^{-1})$, and thus $RA = (RQ^{-1} - I)^2$. 
As $c_1 Q \preccurlyeq R \preccurlyeq c_2 Q$, the property \eqref{spd_inequa} yields $\lambda(RQ^{-1})\in [c_1,c_2]$.
Then, we obtain $\lambda(RA) \in \max\{(1-c_1)^2,(1-c_2)^2\}$ which leads to the desired result by the property \eqref{spd_inequa} again.
\end{proof}

The next lemma provides comprehensive estimates regarding the inexact and exact projections.
\begin{lemma}
\label{lem_Pi_est}
Given a set of SPD metrics $\mM=\{M,\widetilde{S}\}$, there uniformly holds
\begin{subequations}
\label{lem_Pi_est_eq0}
\begin{align}
  &   \| P_M  u \|^2_M \le \| \widetilde{P}_{\mM}  u \|^2_M, ~~~~ \forall u\in \bV,  \label{lem_Pi_est_eq01} \\ 
  &   \langle u, \widetilde{P}_{\mM} u \rangle_M \le \| u \|^2_M, ~~~~ \forall u\in \bV.  \label{lem_Pi_est_eq011}
\end{align}
If $ {S} \preccurlyeq  \widetilde{S}$ is assumed, 
then $\langle \cdot, \widetilde{P}_{\mM} \cdot \rangle_M$ forms an inner product and
\begin{align}
\label{lem_Pi_est_eq02}
\| \widetilde{P}_{\mM}  u \|^2_M \le \langle u, \widetilde{P}_{\mM} u \rangle_M , ~~~~ \forall u\in \bV.
\end{align}
Furthermore, if $(1-\epsilon)\widetilde{S} \preccurlyeq  {S} \preccurlyeq  \widetilde{S}$, 
with $\epsilon \in (0,1)$, then
\begin{align}
&(1-\epsilon)  \| ( I - P_M) u \|_M \le \| (I - \widetilde{P}_{\mM} )u \|_M \le  \| ( I - P_M) u \|_M , ~~~~ \forall u\in \bV,  \label{lem_Pi_est_eq03}\\
& \| (\widetilde{P}_{\mM} - P_M )^T u \|_{M^{-1}} \le \epsilon \| u \|_{M^{-1}}, ~~~~ \forall u\in \bV. \label{lem_Pi_est_eq04}
\end{align}
In addition, given two metric sets $\mM_1=\{M_1, \widetilde{S}_1 \}$ and $\mM_2=\{M_2, \widetilde{S}_2 \}$, 
assume 
$M_2 \preccurlyeq c M_1$,
and $(1-\epsilon_i) \widetilde{S}_i \preccurlyeq {S}_i \preccurlyeq  \widetilde{S}_i$, $i=1,2$, with $\epsilon_i \in (0,1)$. 
Then, there hold for any $ u\in \bV$ that
\begin{align}
& \| ( \widetilde{P}_{\mM_1} - \widetilde{P}_{\mM_2} \widetilde{P}_{\mM_1} ) u \|_{M_2} \le \min\{ \sqrt{c}\epsilon_1 \| u \|_{M_1},
c\epsilon_1 \| u \|_{M_2} \},  \label{lem_Dproj_new_eq0} \\
& \| ( \widetilde{P}_{\mM_1} - \widetilde{P}_{\mM_2} \widetilde{P}_{\mM_1} ) u \|_{M_2} 
\le \min\Big\{ \sqrt{c}\frac{\epsilon_1}{1-\epsilon_1} \| (I - \widetilde{P}_{\mM_1}) u \|_{M_1},
c\frac{\epsilon_1}{1-\epsilon_2} \| (I - \widetilde{P}_{\mM_2})u \|_{M_2}\Big\}. \label{lem_Dproj_new_eq01}
\end{align}
\end{subequations}
\end{lemma}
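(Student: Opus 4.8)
The plan is to reduce everything to the Schur-complement forms $P_M = I - M^{-1}B^TS^{-1}B$ and $\widetilde{P}_{\mM} = I - M^{-1}B^T\widetilde{S}^{-1}B$, evaluating every $M$- or $M^{-1}$-inner product by peeling off $B$ and $B^T$ so that only operators on $\bQ$ remain. For the single-metric inequalities I would first record, with $w = Bu$, the identities $\|\widetilde{P}_{\mM}u\|_M^2 = \|u\|_M^2 - 2\langle w,\widetilde{S}^{-1}w\rangle + \langle\widetilde{S}^{-1}w,S\widetilde{S}^{-1}w\rangle$, $\|P_Mu\|_M^2 = \|u\|_M^2 - \langle w,S^{-1}w\rangle$, and $\langle u,\widetilde{P}_{\mM}u\rangle_M = \|u\|_M^2 - \langle w,\widetilde{S}^{-1}w\rangle$. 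Then \eqref{lem_Pi_est_eq011} is immediate from $\langle w,\widetilde{S}^{-1}w\rangle\ge0$; \eqref{lem_Pi_est_eq01} follows because $\|\widetilde{P}_{\mM}u\|_M^2-\|P_Mu\|_M^2$ collapses to the manifestly nonnegative $\langle(S^{-1}-\widetilde{S}^{-1})w,\,S(S^{-1}-\widetilde{S}^{-1})w\rangle$; and \eqref{lem_Pi_est_eq02} follows from $\langle u,\widetilde{P}_{\mM}u\rangle_M-\|\widetilde{P}_{\mM}u\|_M^2 = \langle\widetilde{S}^{-1}w,(\widetilde{S}-S)\widetilde{S}^{-1}w\rangle\ge0$ once $S\preccurlyeq\widetilde{S}$, the symmetry and positivity of $\langle\cdot,\widetilde{P}_{\mM}\cdot\rangle_M$ (the inner-product claim) coming directly from \eqref{proj_Pi_eq1} and this same computation.

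For \eqref{lem_Pi_est_eq03} I would use $\|(I-\widetilde{P}_{\mM})u\|_M^2 = \langle\widetilde{S}^{-1}w,S\widetilde{S}^{-1}w\rangle$ and $\|(I-P_M)u\|_M^2 = \langle w,S^{-1}w\rangle$, then diagonalize via $v = S^{-1/2}w$ and $T = S^{-1/2}\widetilde{S}S^{-1/2}$: the hypothesis $(1-\epsilon)\widetilde{S}\preccurlyeq S\preccurlyeq\widetilde{S}$ gives $\lambda(T^{-1})\in[1-\epsilon,1]$ and the two-sided bound drops out. Estimate \eqref{lem_Pi_est_eq04} is where Lemma \ref{lem_SPD_general} enters: writing $(\widetilde{P}_{\mM}-P_M)^T = B^T(S^{-1}-\widetilde{S}^{-1})BM^{-1}$ and setting $z = BM^{-1}u$, the squared $M^{-1}$-norm equals $\langle z,(S^{-1}-\widetilde{S}^{-1})S(S^{-1}-\widetilde{S}^{-1})z\rangle$, which Lemma \ref{lem_SPD_general} (with $Q=\widetilde{S}$, $R=S$, $c_1=1-\epsilon$, $c_2=1$) bounds by $\epsilon^2\langle z,S^{-1}z\rangle = \epsilon^2\|(I-P_M)M^{-1}u\|_M^2\le\epsilon^2\|u\|_{M^{-1}}^2$.

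The two-metric estimates \eqref{lem_Dproj_new_eq0}--\eqref{lem_Dproj_new_eq01} (with $\Pi_{\mM}=\widetilde{P}_{\mM}$) rest on one structural observation: since $\mathcal R(P_{M_1})\subset\ker B$ and $I-\widetilde{P}_{\mM_2} = M_2^{-1}B^T\widetilde{S}_2^{-1}B$ annihilates $\ker B$, we have $(I-\widetilde{P}_{\mM_2})P_{M_1}=0$, hence the exact factorization $\Pi_{\mM_1}-\Pi_{\mM_2}\Pi_{\mM_1} = (I-\widetilde{P}_{\mM_2})\widetilde{P}_{\mM_1} = (I-\widetilde{P}_{\mM_2})(\widetilde{P}_{\mM_1}-P_{M_1})$. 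This replaces the full projection by the small perturbation $\widetilde{P}_{\mM_1}-P_{M_1}$ already controlled above. The first entry of each minimum is then the ``$M_1$-side'' estimate: using $\|(I-\widetilde{P}_{\mM_2})\,\cdot\,\|_{M_2}\le\|\cdot\|_{M_2}$ (from \eqref{lem_Pi_est_eq03} together with $I-P_{M_2}$ being an $M_2$-orthogonal projection) and $\|\cdot\|_{M_2}\le\sqrt{c}\,\|\cdot\|_{M_1}$ from $M_2\preccurlyeq cM_1$, followed by $\|(\widetilde{P}_{\mM_1}-P_{M_1})u\|_{M_1}\le\epsilon_1\|(I-P_{M_1})u\|_{M_1}$ (again Lemma \ref{lem_SPD_general}) and the conversion $\|(I-P_{M_1})u\|_{M_1}\le(1-\epsilon_1)^{-1}\|(I-\widetilde{P}_{\mM_1})u\|_{M_1}$ from \eqref{lem_Pi_est_eq03}.

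The main obstacle is the second, ``$M_2$-side'' entry of each minimum, where the perturbation must be measured in the target metric $M_2$ rather than in its native metric $M_1$. Here I would not pass to $M_1$: expanding directly, $\|(\Pi_{\mM_1}-\Pi_{\mM_2}\Pi_{\mM_1})u\|_{M_2}^2 = \langle\widetilde{S}_2^{-1}h,\,S_2\widetilde{S}_2^{-1}h\rangle$ with $h = B\widetilde{P}_{\mM_1}u = (I-S_1\widetilde{S}_1^{-1})Bu$, and then bound this using $S_2\preccurlyeq\widetilde{S}_2$ together with the induced Schur relation $S_2^{-1}\preccurlyeq cS_1^{-1}$ that $M_2\preccurlyeq cM_1$ yields on $\bQ$. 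Reconciling the two distinct operator families $\{M_1,S_1,\widetilde{S}_1\}$ and $\{M_2,S_2,\widetilde{S}_2\}$ through a single one-sided comparison constant $c$ — so that the $\epsilon_1$ smallness of $h$ relative to $Bu$ survives the change of metric and reproduces the factor $c\,\epsilon_1/(1-\epsilon_2)$ against $\|(I-\widetilde{P}_{\mM_2})u\|_{M_2}$ — is the delicate step, and the place where I expect the bulk of the technical care to be required.
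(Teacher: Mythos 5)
Your handling of \eqref{lem_Pi_est_eq01}, \eqref{lem_Pi_est_eq011}, \eqref{lem_Pi_est_eq02}, \eqref{lem_Pi_est_eq03} and \eqref{lem_Pi_est_eq04} is correct and is essentially the paper's own argument: the paper likewise works from $M\widetilde{P}_{\mM}=M-B^T\widetilde{S}^{-1}B$, $M\widetilde{P}_{\mM}^2=M-B^T(2\widetilde{S}^{-1}-\widetilde{S}^{-1}S\widetilde{S}^{-1})B$, $MP_M^2=M-B^TS^{-1}B$, obtains \eqref{lem_Pi_est_eq01} from $(S^{-1}-\widetilde{S}^{-1})S(S^{-1}-\widetilde{S}^{-1})\succcurlyeq 0$, \eqref{lem_Pi_est_eq02} from $\widetilde{S}^{-1}(\widetilde{S}-S)\widetilde{S}^{-1}\succcurlyeq 0$, and \eqref{lem_Pi_est_eq04} from Lemma \ref{lem_SPD_general} together with $M^{-1}B^TS^{-1}BM^{-1}\preccurlyeq M^{-1}$. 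Your factorization $\Pi_{\mM_1}-\Pi_{\mM_2}\Pi_{\mM_1}=(I-\Pi_{\mM_2})(\Pi_{\mM_1}-P_{M_1})$ is the same algebra as the paper's identity $\widetilde{P}^T_{\mM_1}(I-\widetilde{P}^T_{\mM_2})M_2(I-\widetilde{P}_{\mM_2})\widetilde{P}_{\mM_1}=B^T(I-\widetilde{S}_1^{-1}S_1)\widetilde{S}_2^{-1}S_2\widetilde{S}_2^{-1}(I-S_1\widetilde{S}_1^{-1})B$, and your derivation of the first entry of each minimum (the $\sqrt{c}$-weighted, $M_1$-side bounds) is complete and matches the paper's chain $\widetilde{S}_2^{-1}S_2\widetilde{S}_2^{-1}\preccurlyeq\widetilde{S}_2^{-1}\preccurlyeq S_2^{-1}\preccurlyeq cS_1^{-1}$ followed by Lemma \ref{lem_SPD_general} and $B^TS_1^{-1}B\preccurlyeq M_1$.

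The genuine gap is precisely the step you deferred: the second ($M_2$-side) entries of the minima in \eqref{lem_Dproj_new_eq0}--\eqref{lem_Dproj_new_eq01} are never proved, and the route you sketch cannot prove them. Applying $S_2^{-1}\preccurlyeq cS_1^{-1}$ to the middle factor, as you propose, lands you back on $c\epsilon_1^2\langle Bu,S_1^{-1}Bu\rangle$, i.e.\ an $M_1$-quantity; to trade this for $\|u\|_{M_2}$ or $\|(I-\Pi_{\mM_2})u\|_{M_2}$ you need the \emph{reverse} relation $S_1^{-1}\preccurlyeq cS_2^{-1}$, equivalently $M_1\preccurlyeq cM_2$, which the stated one-sided hypothesis $M_2\preccurlyeq cM_1$ does not provide. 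This is not just a missing computation: under the hypothesis as written the $M_2$-side bounds are false. Take $B=I$ (so $P_{M_i}=0$), $\widetilde{S}_2=S_2$ (so $\Pi_{\mM_2}=0$), $S_1=\mathrm{diag}(1,\mu)$, $\widetilde{S}_1=\mathrm{diag}(1,\mu/(1-\epsilon_1))$, and $S_2=\left(\begin{smallmatrix}100 & 9.9\\ 9.9 & 1\end{smallmatrix}\right)$; for small $\mu$ one has $S_1\preccurlyeq S_2$, i.e.\ $M_2\preccurlyeq M_1$ with $c=1$, yet $\Pi_{\mM_1}-\Pi_{\mM_2}\Pi_{\mM_1}=\mathrm{diag}(0,\epsilon_1)$ and
\begin{equation*}
\sup_{u\neq 0}\frac{\|\mathrm{diag}(0,\epsilon_1)\,u\|_{M_2}}{\|u\|_{M_2}}
=\epsilon_1\sqrt{(S_2)_{22}\,(S_2^{-1})_{22}}\approx 7\,\epsilon_1,
\end{equation*}
violating $c\epsilon_1\|u\|_{M_2}$. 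So the second entries genuinely require a two-sided comparison of $M_1$ and $M_2$ (with it, $S_1^{-1}\preccurlyeq cS_2^{-1}$ plus $\langle Bu,S_2^{-1}Bu\rangle\le(1-\epsilon_2)^{-2}\|(I-\Pi_{\mM_2})u\|^2_{M_2}$ closes the step with exactly the stated constants); such two-sided control is available in the paper's applications through \textbf{(H1)}, which is why the later lemmas that invoke the $M_2$-side bound remain sound. To be fair, the paper's own appendix is incomplete at the same place — it stops at $c\epsilon_1^2B^TS_1^{-1}B$, which yields only the first entries — so your instinct about where the difficulty sits was right; but your proposal, as written, leaves the claim unproved, and unprovable without strengthening the hypothesis.
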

\begin{proof}
As the proof is a little technical, we put it in Appendix \ref{append_lem_Pi_est}.
\end{proof}



Next, we present the convexity and Lipschitz properties of the inexact projected gradient operator.
\begin{lemma}
\label{lem_convLip}
Under \eqref{conv_Lip}, there holds
\begin{subequations}
\label{lem_convLip2_eq0}
\begin{align}
     D_f(u,v) \leq   \frac{1}{2\mu_{f,M}} \|  \widetilde{\nabla}_{\mM}( f(u) -  f(v)) \|^2_{M}  , \quad \forall  u, v \in \ker(B),  \label{lem_convLip2_eq01} 
\end{align}
\RG{and if $\widetilde{S} \succcurlyeq  {S}$, then it holds that}
\begin{align}
D_f(u,v) \geq   \frac{1}{2L_{f,M}} \|  \widetilde{\nabla}_{\mM}( f(u) -  f(v)) \|^2_{M}  , \quad \forall  u, v \in \bV. \label{lem_convLip2_eq02}
\end{align}
\end{subequations}
\end{lemma}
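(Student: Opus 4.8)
The plan is to reduce both estimates to the gradient-based Bregman bounds in \eqref{eq: Breg bound} and then transfer them through the mapping properties of $\widetilde{P}_{\mM}$ collected in Lemma \ref{lem_Pi_est}. Throughout I write $g := \nabla f(u) - \nabla f(v)$ and $w := M^{-1} g$, so that $\|g\|_{M^{-1}}^2 = \langle g, M^{-1} g\rangle = \|w\|_M^2$ and, by linearity, $\nabla_{\mM}(f(u) - f(v)) = \widetilde{P}_{\mM} M^{-1} g = \widetilde{P}_{\mM} w$. Hence the right-hand side of both \eqref{lem_convLip2_eq01} and \eqref{lem_convLip2_eq02} equals $\|\widetilde{P}_{\mM} w\|_M^2$, and the task is to sandwich this quantity against $\|w\|_M^2$ or $\|P_M w\|_M^2$ using Lemma \ref{lem_Pi_est}, exploiting the universal ordering $\|P_M w\|_M \le \|\widetilde{P}_{\mM} w\|_M \le \|w\|_M$ furnished by \eqref{lem_Pi_est_eq01}, \eqref{lem_Pi_est_eq011}, and \eqref{lem_Pi_est_eq02}.

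For the lower bound \eqref{lem_convLip2_eq02}, I would start from the smoothness half of \eqref{eq: Breg bound}, valid on all of $\bV$, giving $D_f(u,v) \ge \frac{1}{2L_{f,M}}\|g\|_{M^{-1}}^2 = \frac{1}{2L_{f,M}}\|w\|_M^2$. Under the stated hypothesis $\widetilde{S} \succcurlyeq S$, chaining \eqref{lem_Pi_est_eq02} with \eqref{lem_Pi_est_eq011} gives the contraction $\|\widetilde{P}_{\mM} w\|_M^2 \le \langle w, \widetilde{P}_{\mM} w\rangle_M \le \|w\|_M^2$. Combining the two yields $D_f(u,v) \ge \frac{1}{2L_{f,M}}\|w\|_M^2 \ge \frac{1}{2L_{f,M}}\|\widetilde{P}_{\mM} w\|_M^2$, which holds for all $u,v \in \bV$ as claimed.

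The upper bound \eqref{lem_convLip2_eq01} is the main obstacle, and it is precisely where the restriction $u, v \in \ker(B)$ is indispensable. The naive route fails: the strong-convexity half of \eqref{eq: Breg bound} only gives $D_f(u,v) \le \frac{1}{2\mu_{f,M}}\|w\|_M^2$, and since $\|\widetilde{P}_{\mM} w\|_M^2 \le \|w\|_M^2$ this does not imply the claim. Instead I would apply \eqref{eq: Breg bound} inside the Hilbert space $(\ker(B), \langle\cdot,\cdot\rangle_M)$: as $u - v \in \ker(B)$ the Bregman divergence is unchanged, and \eqref{conv_Lip} certifies that $f$ restricted to $\ker(B)$ keeps the constants $\mu_{f,M}, L_{f,M}$. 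The crux is identifying the intrinsic gradient on this subspace; since $P_M$ is the $M$-orthogonal projection onto $\ker(B)$ by \eqref{soblev_grad_3}, the Riesz representative of the restricted gradient is $P_M M^{-1}\nabla f$, so the dual-norm term in \eqref{eq: Breg bound} sharpens to $\|P_M w\|_M$ and yields $D_f(u,v) \le \frac{1}{2\mu_{f,M}}\|P_M w\|_M^2$. Applying \eqref{lem_Pi_est_eq01} then enlarges $\|P_M w\|_M^2 \le \|\widetilde{P}_{\mM} w\|_M^2$, completing \eqref{lem_convLip2_eq01}. The delicate ordering here is essential: the subspace restriction must first shrink the bound from $\|w\|_M^2$ to $\|P_M w\|_M^2$ so that \eqref{lem_Pi_est_eq01} can legitimately grow it back to $\|\widetilde{P}_{\mM} w\|_M^2$. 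I would also verify that $v$ minimizes $x \mapsto f(x) - \langle \nabla f(v), x\rangle$ over $\ker(B)$, which is immediate because its projected gradient $P_M M^{-1}(\nabla f(v) - \nabla f(v))$ vanishes while $f$ remains strongly convex, so that the minimizer-gap form of \eqref{eq: Breg bound} applies cleanly on the subspace.
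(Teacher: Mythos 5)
Your proof is correct, and the hypotheses are consumed exactly where they are needed; the comparison with the paper's own proof is as follows. For the upper bound \eqref{lem_convLip2_eq01}, both arguments rest on the same two ingredients --- the identification of $P_M M^{-1}\nabla f$ as the $M$-Riesz gradient of $f$ restricted to $\ker(B)$, followed by the enlargement $\|P_M w\|_M^2 \le \|\widetilde{P}_{\mM}w\|_M^2$ from \eqref{lem_Pi_est_eq01}, with $w = M^{-1}(\nabla f(u)-\nabla f(v))$ in your notation --- but the paper does not invoke \eqref{eq: Breg bound} abstractly on the subspace; it re-derives that inequality by hand, introducing $\phi(\cdot) = f(\cdot) - \langle\nabla f(v),\cdot\rangle$ and explicitly minimizing the strong-convexity lower bound over trial points $w = u + P_M\xi$, solving for the critical point. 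Your version compresses that computation into a single citation of the known dual estimate on the Hilbert space $(\ker(B),\langle\cdot,\cdot\rangle_M)$, which is legitimate precisely because you verify the two facts the transfer needs: invariance of the Bregman divergence under restriction and inheritance of the constants from \eqref{conv_Lip}. The genuine divergence is in the lower bound \eqref{lem_convLip2_eq02}: the paper again runs a descent-lemma argument, inserting the trial point $u - \nabla_{\mM}\phi(u)/L_{f,M}$ into the smoothness estimate and using \eqref{lem_Pi_est_eq02} in the form $\langle\nabla\phi(u),\nabla_{\mM}\phi(u)\rangle \ge \|\nabla_{\mM}\phi(u)\|_M^2$, whereas you take the $\tfrac{1}{2L_{f,M}}$ half of \eqref{eq: Breg bound} on all of $\bV$ and then shrink by the non-expansiveness $\|\widetilde{P}_{\mM}w\|_M \le \|w\|_M$, obtained by chaining \eqref{lem_Pi_est_eq02} with \eqref{lem_Pi_est_eq011}. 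Both routes use \eqref{lem_Pi_est_eq02} and hence the same hypothesis $\widetilde{S}\succcurlyeq S$, so the assumptions agree; your route is shorter and makes transparent that the lower bound has nothing to do with the constraint set, at the cost of treating \eqref{eq: Breg bound} as a black box, while the paper's argument is self-contained given only \eqref{conv_Lip}.
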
 
\begin{proof}
The proof follows from the techniques in \cite{2003Nesterov} with the properties in \eqref{lem_Pi_est_eq01}-\eqref{lem_Pi_est_eq02}.
Fixing a $v\in \ker(B)$, we introduce an auxiliary function: $\phi(u)= f(u) - \langle \nabla f(v),u \rangle$ satisfying
\begin{equation}
\label{lem_convLip_eq4}
D_{\phi}(w,u) = \phi(w) - \phi(u) - \langle \nabla \phi(u), w-u \rangle =  D_{f}(w,u). 
\end{equation}
Then, \eqref{conv_Lip} leads to $D_{\phi}(w,u) \geq \frac{\mu_{f,M}}{2}\| w - u \|^2_M$.
Thus, as a strongly-convex function, $\phi$ achieves the minimum at $v$ where $\nabla \phi(v) = 0$. 
Then, we \RG{obtain from \eqref{lem_convLip_eq4}} that
\begin{equation}
\label{lem_convLip_eq5}
\begin{split}
\phi(v) &= \min_{w\in \ker(B)} \phi(w) \geq \min_{w \in \ker(B)} \left[  \phi(u) + \langle \nabla \phi(u), w-u \rangle + \frac{\mu_{f,M}}{2}\| w - u \|^2_M \right] .
\end{split}
\end{equation}
To minimize the right-hand side of \eqref{lem_convLip_eq5} over $w \in \ker(B)$, 
let us take $u\in \ker(B)$ and $w = u + P_M \xi $ for any $\xi\in \bV$. Then, the direct computation yields
$$
\text{the right-hand side of \eqref{lem_convLip_eq5}} =: g(\xi) = \phi(u) + \langle \nabla \phi(u), P_M \xi \rangle + \frac{\mu_{f,M}}{2}\| P_M \xi \|^2_M.
$$
Establishing the equation for the critical point: 
$$
\nabla g(\xi) = \mu_{f,M} P^T_M M P_M \xi + P^T_M \nabla \phi(u) = \mu_{f,M} M  P_M \xi + P^T_M \nabla \phi(u) = 0
$$ 
where \eqref{proj_Pi_eq1} is used. 
Then, we have $ P_M \xi = - M^{-1 }P^T_M \nabla \phi(u) /  \mu_{f,M} $ 
which leads to the minimizer $w = u - P_M M^{-1 } \nabla \phi(u) / \mu_{f,M}$. 
We also note that $P_M M^{-1}\nabla \phi(u) = P_M M^{-1} \nabla(f(u) - f(v))$.
Putting this into \eqref{lem_convLip_eq5}, we then obtain
\begin{equation*}
\label{lem_convLip_eq5_1}
\frac{1}{2\mu_{f,M}} \| P_M M^{-1} \nabla(f(u) - f(v)) \|^2_M \geq \phi(u) - \phi(v)  = D_f(u,v).
\end{equation*}
Then, \eqref{lem_convLip2_eq01} follows from \eqref{lem_Pi_est_eq01} in Lemma \ref{lem_Pi_est}.
Next, we proceed to show \eqref{lem_convLip2_eq02}. 
By \eqref{lem_convLip_eq4} and \eqref{conv_Lip} we have $D_{\phi}(w,u) \le \frac{L_{f,M}}{2} \| w - u \|^2_M$.
Inputting $u - \widetilde{\nabla}_{\mM} \phi(u)/L_{f,M}$ into $w$ in \eqref{lem_convLip_eq5} and using that $v$ is the minimizer of $\phi$, we obtain
\begin{equation*}
\begin{split}
\label{lem_convLip_eq6}
\phi(v) 
  \leq \phi(u) - \langle \nabla \phi(u) , \widetilde{\nabla}_{\mM} \phi(u) \rangle/L_{f,M} + \frac{1}{2L_{f,M}}\| \widetilde{\nabla}_{\mM} \phi(u) \|^2_M 
 \le  \phi(u) -  \frac{1}{2L_{f,M}}\| \widetilde{\nabla}_{\mM} \phi(u) \|^2_M,
\end{split}
\end{equation*}
where we have used $(\nabla \phi(u) , \widetilde{\nabla}_{\mM} \phi(u)) \ge \| \widetilde{\nabla}_{\mM} \phi(u) \|^2_M$ from \eqref{lem_Pi_est_eq02} in the last inequality.
The proof is finished by using $\phi(u) - \phi(v)  = D_f(u,v)$.
\end{proof}

Notably, \eqref{lem_convLip2_eq01} above basically states that the modified gradient $\widetilde{\nabla}_{\mM}$ preserves the convexity property of $ f$ on $\ker(B)$. 
But the trajectory produced by \eqref{PPGD1} may not lie in the constraint set, 
and thus the estimate may not hold either. 
These differences will make the analysis more involved.
So we generalize \eqref{lem_convLip2_eq01} to the case of $u,v \notin \ker(B)$ in the next result.

\begin{lemma}
\label{lem_convLip3}
If $ \widetilde{S} \succcurlyeq  {S}$, then $\forall  u, v \in \bV$, there holds
\begin{equation}
\label{lem_convLip3_eq0}
 \langle  \nabla ( f(u) -  f(v)) , \widetilde{\nabla}_{\mM}  ( f(u) -  f(v)) \rangle \ge \mu^2_{f,M} /2 \| u-v \|^2_M -  L^2_{f,M} \| (\widetilde{P}_{\mM}- I) (u-v)  \|^2_M.
\end{equation}
\end{lemma}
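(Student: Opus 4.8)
The plan is to recognize the left-hand side as a quadratic form in a positive-semidefinite bilinear form and then extract the lower bound via a completing-the-square (equivalently, Cauchy--Schwarz) argument. First I would set $g := \nabla f(u) - \nabla f(v)$ and $a := M^{-1}g$, so that $\nabla_{\mM}(f(u)-f(v)) = \widetilde{P}_{\mM}a$. Using the $M$-self-adjointness of $\widetilde{P}_{\mM}$ from \eqref{proj_Pi_eq1}, the left-hand side rewrites as
\[
\langle \nabla(f(u)-f(v)),\, \nabla_{\mM}(f(u)-f(v))\rangle = \langle g, \widetilde{P}_{\mM}a\rangle = \langle a, \widetilde{P}_{\mM}a\rangle_M.
\]
Under the hypothesis $\widetilde{S}\succcurlyeq S$, Lemma \ref{lem_Pi_est} guarantees that $[\,\cdot\,,\,\cdot\,] := \langle\cdot,\widetilde{P}_{\mM}\cdot\rangle_M$ is a symmetric positive-semidefinite bilinear form (indeed an inner product), which is the structural fact that makes the whole argument run.

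Next I would exploit this positivity: for every $t\ge 0$, expanding the nonnegative quantity $[a-t(u-v),\,a-t(u-v)]\ge 0$ yields
\[
\langle a,\widetilde{P}_{\mM}a\rangle_M \ge 2t\,\langle a,\widetilde{P}_{\mM}(u-v)\rangle_M - t^2\,\langle u-v,\widetilde{P}_{\mM}(u-v)\rangle_M.
\]
I would bound the last term by $\langle u-v,\widetilde{P}_{\mM}(u-v)\rangle_M \le \|u-v\|^2_M$ using \eqref{lem_Pi_est_eq011}, and evaluate the cross term through the identity \eqref{lem_proj_id1_eq01}, namely $\langle a,\widetilde{P}_{\mM}(u-v)\rangle_M = \langle g, \widetilde{P}_{\mM}(u-v)\rangle$. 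Splitting this as $\langle g,u-v\rangle - \langle g,(I-\widetilde{P}_{\mM})(u-v)\rangle$, the strong monotonicity implied by \eqref{conv_Lip} gives $\langle g,u-v\rangle \ge \mu_{f,M}\|u-v\|^2_M$, while the Lipschitz bound $\|g\|_{M^{-1}} \le L_{f,M}\|u-v\|_M$ (from \eqref{conv_Lip} together with \eqref{eq: Breg bound}) and the $M^{-1}$--$M$ Cauchy--Schwarz inequality control the defect term. Writing $X := \|u-v\|_M$ and $Y := \|(\widetilde{P}_{\mM}-I)(u-v)\|_M$, this produces $\langle a,\widetilde{P}_{\mM}(u-v)\rangle_M \ge \mu_{f,M}X^2 - L_{f,M}XY$.

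Finally I would substitute these estimates and optimize over $t\ge 0$. When $\mu_{f,M}X \ge L_{f,M}Y$ the choice $t = (\mu_{f,M}X^2 - L_{f,M}XY)/X^2 \ge 0$ gives $\langle a,\widetilde{P}_{\mM}a\rangle_M \ge (\mu_{f,M}X - L_{f,M}Y)^2$, and the target follows from the elementary identity
\[
(\mu_{f,M}X - L_{f,M}Y)^2 - \Big(\tfrac{\mu^2_{f,M}}{2}X^2 - L^2_{f,M}Y^2\Big) = \tfrac12\big(\mu_{f,M}X - 2L_{f,M}Y\big)^2 \ge 0;
\]
in the complementary regime $\mu_{f,M}X < L_{f,M}Y$ (and the degenerate case $X=0$) the right-hand side of the claim is already nonpositive while $\langle a,\widetilde{P}_{\mM}a\rangle_M \ge 0$, so the inequality holds trivially. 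I expect the main obstacle to lie in the second step: one must resist discarding the inexactness and instead track the $(I-\widetilde{P}_{\mM})(u-v)$ defect cleanly, which is precisely why the self-adjointness \eqref{proj_Pi_eq1} and the positivity furnished by $\widetilde{S}\succcurlyeq S$ (i.e.\ \eqref{lem_Pi_est_eq02}) are both indispensable; the sign bookkeeping in the final optimization is the only remaining delicate point.
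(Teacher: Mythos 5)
Your proof is correct and takes essentially the same approach as the paper's: both identify the left-hand side with the quadratic form $\langle M^{-1}g, \widetilde{P}_{\mM}M^{-1}g\rangle_M$ (with $g = \nabla f(u)-\nabla f(v)$), both lower-bound the cross term by $\mu_{f,M}\|u-v\|_M^2 - L_{f,M}\|u-v\|_M\|(I-\widetilde{P}_{\mM})(u-v)\|_M$ using \eqref{lem_proj_id1_eq01} with the convexity and Lipschitz bounds, and both finish with a Cauchy--Schwarz-type argument yielding the squared bound $\bigl(\mu_{f,M}\|u-v\|_M - L_{f,M}\|(I-\widetilde{P}_{\mM})(u-v)\|_M\bigr)^2$ followed by the same elementary algebra; the only difference is that you prove the Cauchy--Schwarz step by completing the square in the semidefinite form $\langle\cdot,\widetilde{P}_{\mM}\cdot\rangle_M$ (invoking \eqref{lem_Pi_est_eq011}), whereas the paper routes it through the $M$-norm of $\nabla_{\mM}$ and \eqref{lem_Pi_est_eq02}. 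Your explicit handling of the degenerate regime where $\mu_{f,M}\|u-v\|_M < L_{f,M}\|(I-\widetilde{P}_{\mM})(u-v)\|_M$, which the paper leaves implicit when squaring its one-sided bound, is a welcome extra bit of rigor but not a different method.
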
 
\begin{proof}
Let us denote $w:=u-v$. 
By \eqref{lem_proj_id1_eq01}, \eqref{conv_Lip} and \eqref{eq: Breg bound}, we have
\begin{equation}
\label{lem_convLip3_eq1}
\begin{split}
\langle w,  \widetilde{\nabla}_{\mM}  ( f(u) -  f(v)) \rangle_M & =  \langle \widetilde{P}_{\mM} w,  \nabla ( f(u) -  f(v)) \rangle  \\
& = \langle w,  \nabla ( f(u) -  f(v)) \rangle + \langle \widetilde{P}_{\mM} w - w ,  \nabla ( f(u) -  f(v)) \rangle  \\
&\ge \mu_{f,M} \| w \|^2_M -  L_{f,M} \| \widetilde{P}_{\mM} w - w \|_M \| w \|_M.
\end{split}
\end{equation}
Next, by H\"older's inequality, we obtain $(  w , \widetilde{\nabla}_{\mM}  ( f(u) -  f(v)) )_M \le \| w \|_M \|  \widetilde{\nabla}_{\mM}  ( f(u) -  f(v))  \|_{M}$
which yields, with \eqref{lem_convLip3_eq1}, that
 $$
  \|  \widetilde{\nabla}_{\mM}  ( f(u) -  f(v))  \|_{M} \ge \mu_{f,M}  \| w \|_M -  L_{f,M} \| \widetilde{P}_{\mM} w - w \|_M .
 $$
 Then, the desired result is concluded by \eqref{lem_Pi_est_eq02} in Lemma \ref{lem_Pi_est}
\end{proof}

\begin{remark}
    \label{rem_grad0}
    A direct corollary of \eqref{opt_cond} and \eqref{lem_convLip2_eq02} in Lemma \ref{lem_convLip} with the exact projection is 
    \begin{equation}
    \label{rem_grad0_eq1}
    f(u) - f(u^{\star}) \ge \frac{1}{2L_{f,M}} \| P_{M} M^{-1} \nabla  ( f(u) -  f(u^{\star})) \|^2_{M}, ~~~ \forall u\in \ker(B).
    \end{equation}
    When applying Lyapunov analysis to PGD with exact projections, 
    one natural choice of Lyapunov functions is $f(u)-f(u^{\star}) = D_f(u,u^{\star})$ due to \eqref{opt_cond} if $u^{\star}\in \ker(B)$. 
    Notice that \eqref{rem_grad0_eq1} makes it a positive function.
    However, the corresponding optimality condition ${\nabla}_{\mM} f(u^{\star})=0$ is generally not true if $u^{\star}\notin \ker(B)$.
    We shall design a delicate and effective Lyapunov function in \eqref{cont_Lyap} below.
    One key motivation actually comes from the term $\| (\widetilde{P}_{\mM}- I) (u-v)  \|^2_M$ in \eqref{lem_convLip3_eq0} above
    which is precisely attributed to $u,v \notin \ker(B)$;
otherwise it will vanish.
\end{remark}

According to \eqref{lem_Pi_est_eq02} above,
we need $\widetilde{S} \succcurlyeq  S$ to ensure that $\langle \cdot, \widetilde{P}_{\mM} \cdot \rangle_M$ qualifies as an inner product.
This condition is also needed in \eqref{lem_convLip2_eq02} of Lemma \ref{lem_convLip} for \RG{making the Bregman divergence positive}.
So, it will be consistently assumed in subsequent discussions. 
Scaling $\widetilde{S}$ can achieve this requirement.


\subsection{Existence, uniqueness and estimates of equilibrium solutions}
\label{subsec:equilibrium}
In this subsection, \RG{we consider the equilibrium of \eqref{PPGD1}, i.e., the stationary point of this ODE.} 
It can be identified as a fixed point of the following function:
\begin{equation}
\label{phi_rhs}
\phi(u;\mM_{\star},\alpha^{\star}) := \widetilde{P}_{\mM_{\star}}( u - \alpha^{\star} M^{-1}_{\star} \nabla f(u)),
\end{equation}
where $\mM_{\star} = \{ M_{\star}, \widetilde{S}_{\star} \}$ is a given metric set and $\alpha^{\star}>0$.
We will show the existence and uniqueness of the fixed point $u^{\star}_{\phi}$ of $\phi$ in Lemma \ref{lem_fixpt} below.
Apparently, different $\mM_{\star}$ and $\alpha^{\star}$ lead to different $u^{\star}_{\phi}$.
We then give in Lemma \ref{lem_u_diff} its error to the true minimizer 
which can be effectively controlled by the step size $\alpha$ and inexactness $\delta$.
At this stage we have not made any assumptions on the relation between the metric sequence $\{\mM(t)\}_{t\ge 0}$ and $\mM_{\star}$.
If $\mM(t)$ is assumed to be convergent to $\mM_{\star}$, 
it becomes both intuitive to see and more straightforward to prove that $u(t)$ also converges to $u^{\star}_{\phi}$.
However, this assumption may result in a circular argument as
constructing $M(t)$ usually relies on $u(t)$ in practice, i.e., $M(t) = M(u(t))$. 
Assuming convergence for the former without established convergence for the latter presents a substantial risk 
and dilemma; also see Remark \ref{rem_assump} for the related discussion.

The following result shows that $\phi$ in \eqref{phi_rhs} is a contraction.
\begin{lemma}
\label{lem_fixpt}
Assume ${S}_{\star} \preccurlyeq \widetilde{S}_{\star}$,
then the function $\phi$ defined in \eqref{phi_rhs} satisfies
$$
\| \phi(u) - \phi(v) \|_{M_{\star}} \le \max\{ |1- \alpha^{\star}L_{f,M_{\star}}|,  |1- \alpha^{\star}\mu_{f,M_{\star}}| \} \| u - v \|_{M_{\star}}.
$$ 
\RG{Therefore, for each $\alpha^{\star}\in (0, 2L^{-1}_{f,M_{\star}})$, there exists a unique fixed point of $\phi$,
which depends on $\alpha^{\star}$.}
\end{lemma}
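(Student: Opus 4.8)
The plan is to combine the non-expansiveness of the inexact projection in the $M_\star$-norm with the classical contraction estimate for a preconditioned gradient step. Throughout write $M:=M_{\star}$, $\alpha:=\alpha^{\star}$, $L:=L_{f,M_{\star}}$, $\mu:=\mu_{f,M_{\star}}$, and set $w:=u-v$, $g:=\nabla f(u)-\nabla f(v)$. Since $\widetilde{P}_{\mM_\star}$ is linear, subtracting the two values of $\phi$ in \eqref{phi_rhs} gives
\[
\phi(u)-\phi(v)=\widetilde{P}_{\mM_\star}\big(w-\alpha M^{-1}g\big),
\]
so it suffices to bound the $M$-norm of the bracketed vector and to show that $\widetilde{P}_{\mM_\star}$ does not increase it.

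First I would establish that $\widetilde{P}_{\mM_\star}$ is non-expansive in the $M$-norm. Under the standing assumption $S_\star\preccurlyeq\widetilde{S}_\star$, chaining \eqref{lem_Pi_est_eq02} and \eqref{lem_Pi_est_eq011} yields $\|\widetilde{P}_{\mM_\star}z\|_M^2\le\langle z,\widetilde{P}_{\mM_\star}z\rangle_M\le\|z\|_M^2$ for every $z\in\bV$, hence $\|\widetilde{P}_{\mM_\star}z\|_M\le\|z\|_M$. This is the one genuinely structural ingredient --- the place where the specific properties of the inexact operator and the scaling condition $S_\star\preccurlyeq\widetilde{S}_\star$ enter --- so I regard it as the crux; everything afterward is a standard gradient-step computation. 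Applying it gives $\|\phi(u)-\phi(v)\|_M\le\|w-\alpha M^{-1}g\|_M$.

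Next I would expand, using $\langle w,M^{-1}g\rangle_M=\langle w,g\rangle$ and $\|M^{-1}g\|_M^2=\|g\|_{M^{-1}}^2$,
\[
\|w-\alpha M^{-1}g\|_M^2=\|w\|_M^2-2\alpha\langle w,g\rangle+\alpha^2\|g\|_{M^{-1}}^2 .
\]
Writing $a:=\|w\|_M^2$ and $b:=\|g\|_{M^{-1}}^2$ and inserting the co-coercivity bound \eqref{conv_Lip_new} for $\langle w,g\rangle$, the right-hand side is dominated by
\[
\Big(1-\tfrac{2\alpha L\mu}{L+\mu}\Big)a+\Big(\alpha^2-\tfrac{2\alpha}{L+\mu}\Big)b .
\]
The bounds \eqref{conv_Lip} force $\mu^2 a\le b\le L^2 a$, so dividing by $a$ and setting $t:=b/a\in[\mu^2,L^2]$ makes the quotient an \emph{affine} function of $t$; its maximum over the interval is therefore attained at an endpoint. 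A short computation shows the two endpoint values are exactly $(1-\alpha\mu)^2$ (at $t=\mu^2$) and $(1-\alpha L)^2$ (at $t=L^2$), giving the claimed factor $\max\{|1-\alpha\mu|,|1-\alpha L|\}$.

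Finally, for $\alpha\in(0,2L^{-1})$ we have $0<\alpha\mu\le\alpha L<2$, so both $|1-\alpha\mu|<1$ and $|1-\alpha L|<1$ and the factor is strictly less than one. Since $(\bV,\|\cdot\|_{M})$ is complete (the $M$-norm being equivalent to the Hilbert norm as $M$ is SPD), the Banach fixed-point theorem yields the existence and uniqueness of the fixed point $u_\phi^\star$. Beyond the non-expansiveness step already flagged, the only delicate point is matching the two-sided bound $\mu^2 a\le b\le L^2 a$ with the sign of the coefficient $\alpha^2-\tfrac{2\alpha}{L+\mu}$; the ``affine in $t$'' observation disposes of this cleanly and avoids an explicit case split.
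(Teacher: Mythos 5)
Your proof is correct and follows essentially the same route as the paper's: non-expansiveness of $\widetilde{P}_{\mM_\star}$ in the $M_\star$-norm via \eqref{lem_Pi_est_eq02} and \eqref{lem_Pi_est_eq011}, then the classical contraction estimate combining \eqref{conv_Lip_new} with the two-sided bound $\mu_{f,M_\star}^2\|w\|_{M_\star}^2\le\|g\|_{M_\star^{-1}}^2\le L_{f,M_\star}^2\|w\|_{M_\star}^2$, your affine-in-$t$ endpoint argument being just a cleaner phrasing of the paper's $\min\{L^2(\cdot),\mu^2(\cdot)\}$ computation. If anything, your appeal to the Banach fixed-point theorem is more apt than the paper's citation of Brouwer, since a contraction on a complete space yields existence and uniqueness directly.
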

\begin{proof}
By the assumption with \eqref{lem_Pi_est_eq02} and \eqref{lem_Pi_est_eq011},
we have  
\begin{equation*}
\begin{split}
\label{lem_fixpt_eq1}
& \| \phi(u) - \phi(v) \|^2_{M_{\star}}  \le \| (u - v) - \alpha^{\star} M^{-1}_{\star} ( \nabla f(u) -  \nabla f(v) )  \|^2_{M_{\star}} \\
 = & \| u - v \|^2_{M_{\star}} - 2\alpha^{\star} \langle u-v, \nabla f(u) -  \nabla^{\star} f(v) \rangle + (\alpha^{\star})^2 \| \nabla f(u) -  \nabla f(v) \|^2_{M^{-1}_{\star}} \\
 \le & \left(1 - \frac{2L_{f,M_{\star}}\mu_{f,M_{\star}}}{L_{f,M_{\star}}+\mu_{f,M_{\star}}}  \alpha^{\star} \right) \| u - v \|^2_{M_{\star}}
 - \left( \frac{2 }{L_{f,M_{\star}}+\mu_{f,M_{\star}}} - \alpha^{\star}  \right) \alpha^{\star}  \| \nabla f(u) -  \nabla f(v) \|^2_{M^{-1}_{\star}} \\
\le &   \left(1 - \frac{2L_{f,M_{\star}}\mu_{f,M_{\star}}}{L_{f,M_{\star}}+\mu_{f,M_{\star}}}  \alpha^{\star} \right) \| u - v \|^2_{M_{\star}} \\
 &   - \min\Bigg\{ L_{f,M_{\star}}^2 \left( \frac{2 }{L_{f,M_{\star}}+\mu_{f,M_{\star}}} - \alpha^{\star} \right), \mu_{f,M_{\star}}^2 \left( \frac{2 }{L_{f,M_{\star}}+\mu_{f,M_{\star}}} - \alpha^{\star}  \right) \Bigg\} \alpha^{\star} \| u - v \|^2_{M_{\star}},\\
 & =  \max\{ |1- \alpha^{\star}L_{f,M_{\star}}|^2,  |1- \alpha^{\star}\mu_{f,M_{\star}}|^2 \} \| u - v \|^2_{M_{\star}},
\end{split}
\end{equation*}
where in the last inequality, we have used \eqref{conv_Lip} and \eqref{eq: Breg bound}.
It yields the desired estimate by the assumption of $\alpha^{\star}$.
By Banach Fixed Point Theorem, the fixed point of $\phi$ exists.
\end{proof}
By Lemma \ref{lem_fixpt},
we know that $u^{\star}_{\phi}$ is well-defined, hence we obtain the existence and uniqueness of the equilibrium point.
Remarkably, Lemma \ref{lem_fixpt} is independent of the inexactness level $\delta$.


\begin{remark}
\label{rem_PMustar}
\eqref{opt_cond} shows $P_M M^{-1} \nabla f(u^{\star})=0$ for any SPD operator $M$.
However, the first-order optimality condition $P_{M_{\star}} M^{-1}_{\star} \nabla f(u^{\star}_{\phi})= 0$ only holds for $M_{\star}$.
To see this, we only need to apply $P_{M_{\star}}$ to each side of $u^{\star}_{\phi} = \widetilde{P}_{\mM_{\star}}(u^{\star}_{\phi} - \alpha^{\star} M_{\star}^{-1} \nabla f(u^{\star}_{\phi}) )$ with $P_{M_{\star}} \widetilde{P}_{\mM_{\star}} = P_{M_{\star}}$. 
\end{remark}


\begin{lemma}
\label{lem_u_diff}
\begin{subequations}
\label{lem_u_diff_eq0}
Assume $\alpha^{\star} \le 2 L_{f,M_{\star}}^{-1}$ such that the fixed point $u^{\star}_{\phi}$ uniquely exists,
and assume $(1-\delta^{\star}) \widetilde{S}_{\star} \preccurlyeq {S}_{\star} \preccurlyeq \widetilde{S}_{\star}$ with an inexactness level $\delta^{\star}$,
then there holds
\begin{align}
\label{lem_u_diff_eq00}
\| (I - \widetilde{P}_{\mM_{\star}}) (u^{\star} - u^{\star}_{\phi} ) \|_{M_{\star}}   \le 2\alpha^{\star} \delta^{\star}  \| \nabla f(u^{\star}) \|_{M^{-1}_{\star}} .
\end{align}
Additionally, if $\alpha^{\star} \le L_{f,M_{\star}}^{-1}$ and $\delta^{\star} \le ( 4 \kappa_{f,M_{\star}})^{-1}$, then there holds
\begin{align}
& \| u^{\star}_{\phi} - u^{\star} \|_{M_{\star}} \le 3\sqrt{\kappa_{f,M_{\star}}} \mu_{f,M_{\star}}^{-1/2} \delta^{\star} \sqrt{\alpha^{\star}} \| \nabla f(u^{\star}) \|_{M^{-1}_{\star}},\label{lem_u_diff_eq01}    \\
&   \| \nabla f(u^{\star}_{\phi}) - \nabla f(u^{\star}) \|_{M^{-1}_{\star}} \le 3 \sqrt{ L_{f,M_{\star}}} \kappa_{f,M_{\star}}  \delta^{\star} \sqrt{\alpha^{\star}} \| \nabla f(u^{\star}) \|_{M^{-1}_{\star}},
\label{lem_u_diff_eq02}  \\
&  \| \nabla f(u^{\star}_{\phi})  \|_{M^{-1}_{\star}} \le 2 \| \nabla f(u^{\star}) \|_{M^{-1}_{\star}}. \label{lem_u_diff_eq03} 
\end{align}
\end{subequations}
\end{lemma}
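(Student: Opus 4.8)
The plan is to track the error $e:=u^{\star}_{\phi}-u^{\star}$ through its infeasible and feasible components separately, exploiting two optimality identities together with the inexactness estimate \eqref{lem_Pi_est_eq04}. Throughout I abbreviate $P=P_{M_{\star}}$, $\widetilde P=\widetilde P_{\mM_{\star}}$, $g^{\star}=\nabla f(u^{\star})$, $g_{\phi}=\nabla f(u^{\star}_{\phi})$, and write $L,\mu,\kappa$ for $L_{f,M_{\star}},\mu_{f,M_{\star}},\kappa_{f,M_{\star}}$. The two structural facts I would use repeatedly are: (i) $u^{\star}\in\ker(B)$, so $(I-\widetilde P)u^{\star}=M_{\star}^{-1}B^{T}\widetilde S_{\star}^{-1}Bu^{\star}=0$ and, by \eqref{opt_cond}, $PM_{\star}^{-1}g^{\star}=0$; and (ii) at the equilibrium, Remark \ref{rem_PMustar} gives $PM_{\star}^{-1}g_{\phi}=0$. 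Feeding the fixed-point relation $u^{\star}_{\phi}=\widetilde P(u^{\star}_{\phi}-\alpha^{\star}M_{\star}^{-1}g_{\phi})$ into (i)--(ii) yields the backbone identity
\[
(I-\widetilde P)e=(I-\widetilde P)u^{\star}_{\phi}=-\alpha^{\star}\widetilde P M_{\star}^{-1}g_{\phi}=-\alpha^{\star}(\widetilde P-P)M_{\star}^{-1}g_{\phi},
\]
where the last step discards $\alpha^{\star}PM_{\star}^{-1}g_{\phi}=0$. Thus the infeasible part of the error is exactly the inexactness operator $\widetilde P-P$ applied to the gradient.

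First I would establish \eqref{lem_u_diff_eq03}, on which the factor $2$ and the appearance of $g^{\star}$ in \eqref{lem_u_diff_eq00} rely. Since $\widetilde P u^{\star}=u^{\star}$, the map $\phi$ from \eqref{phi_rhs} satisfies $\phi(u^{\star})-u^{\star}=-\alpha^{\star}(\widetilde P-P)M_{\star}^{-1}g^{\star}$, whose $M_{\star}$-norm is at most $\alpha^{\star}\delta^{\star}\|g^{\star}\|_{M_{\star}^{-1}}$ by \eqref{lem_Pi_est_eq04} (after translating it to $\|(\widetilde P-P)M_{\star}^{-1}g\|_{M_{\star}}\le\delta^{\star}\|g\|_{M_{\star}^{-1}}$ using the $M_{\star}$-self-adjointness \eqref{proj_Pi_eq1}). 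Applying the contraction estimate of Lemma \ref{lem_fixpt} to $u^{\star}_{\phi}=\phi(u^{\star}_{\phi})$ gives $(1-q)\|e\|_{M_{\star}}\le\alpha^{\star}\delta^{\star}\|g^{\star}\|_{M_{\star}^{-1}}$ with $1-q=\alpha^{\star}\mu$ when $\alpha^{\star}\le L^{-1}$, hence $\|e\|_{M_{\star}}\le(\delta^{\star}/\mu)\|g^{\star}\|_{M_{\star}^{-1}}$. The Lipschitz bound \eqref{eq: Breg bound} then yields $\|g_{\phi}-g^{\star}\|_{M_{\star}^{-1}}\le L\|e\|_{M_{\star}}\le\kappa\delta^{\star}\|g^{\star}\|_{M_{\star}^{-1}}\le\tfrac14\|g^{\star}\|_{M_{\star}^{-1}}$ under $\delta^{\star}\le(4\kappa)^{-1}$, which is \eqref{lem_u_diff_eq03}. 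Inserting $\|g_{\phi}\|_{M_{\star}^{-1}}\le2\|g^{\star}\|_{M_{\star}^{-1}}$ and the translated \eqref{lem_Pi_est_eq04} into the backbone identity proves \eqref{lem_u_diff_eq00}.

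For the sharper bounds \eqref{lem_u_diff_eq01}--\eqref{lem_u_diff_eq02}, I would convert the infeasible bound to the exact projection via \eqref{lem_Pi_est_eq03}, giving $\|(I-P)e\|_{M_{\star}}\le(1-\delta^{\star})^{-1}\|(I-\widetilde P)e\|_{M_{\star}}=O(\alpha^{\star}\delta^{\star}\|g^{\star}\|_{M_{\star}^{-1}})$, and then control the feasible part $Pe$. The crucial observation is that both optimality conditions in (i)--(ii) say that $g^{\star}$ and $g_{\phi}$ annihilate $\ker(B)$, so $\langle g_{\phi}-g^{\star},Pe\rangle=0$ and therefore $\langle g_{\phi}-g^{\star},e\rangle=\langle g_{\phi}-g^{\star},(I-P)e\rangle$. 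Plugging this into the strong-monotonicity inequality \eqref{conv_Lip_new} (in its squared form) and writing $a=\|e\|_{M_{\star}}$, $b=\|g_{\phi}-g^{\star}\|_{M_{\star}^{-1}}$, $c=\|(I-P)e\|_{M_{\star}}$ produces $\tfrac{L\mu}{L+\mu}a^{2}+\tfrac{1}{L+\mu}b^{2}\le bc$; solving this quadratic gives $b\le(L+\mu)c$ and $a\le\tfrac{L+\mu}{\sqrt{L\mu}}c$. Substituting the bound on $c$ and using $\alpha^{\star}\le L^{-1}$ to rewrite one factor of $\alpha^{\star}$ as $\sqrt{\alpha^{\star}}\cdot\sqrt{\alpha^{\star}}\le\sqrt{\alpha^{\star}}L^{-1/2}$ recovers the stated $\sqrt{\alpha^{\star}}$, $\sqrt{\kappa}$ and $\mu^{-1/2}$ scalings, with the numerical constants coming from $\delta^{\star}\le(4\kappa)^{-1}$ and $1/(1-\delta^{\star})\le 4/3$.

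The main obstacle is the feasible component $Pe$: the projection identity pins down only $(I-\widetilde P)e$ algebraically, so $Pe$ carries no direct handle and must be extracted from the nonlinearity through strong convexity. The linchpin is the orthogonality $\langle g_{\phi}-g^{\star},Pe\rangle=0$, which turns the monotonicity lower bound into a product with the already-controlled infeasible part $c$ rather than an uncontrolled $\|e\|$. A secondary difficulty is the apparent circularity between \eqref{lem_u_diff_eq03} and the error bounds; I would resolve it by deriving \eqref{lem_u_diff_eq03} first from the coarse contraction estimate, which needs nothing about $e$ beyond Lemma \ref{lem_fixpt}. The remaining work---translating \eqref{lem_Pi_est_eq04} into the $M_{\star}$-norm and tracking the constants under the smallness conditions---is routine.
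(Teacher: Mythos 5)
Your proposal is correct in substance but takes a genuinely different route from the paper's proof. Writing $L=L_{f,M_{\star}}$, $\mu=\mu_{f,M_{\star}}$, $\kappa=\kappa_{f,M_{\star}}$, $e=u^{\star}_{\phi}-u^{\star}$: both arguments start from the same backbone identity and the bound $\|(I-\widetilde{P}_{\mM_{\star}})e\|_{M_{\star}}\le\alpha^{\star}\delta^{\star}\|\nabla f(u^{\star}_{\phi})\|_{M_{\star}^{-1}}$ via \eqref{lem_Pi_est_eq04}, but then diverge. The paper pairs its rewritten identity \eqref{lem_u_diff_eq2} with $\eta=u^{\star}-u^{\star}_{\phi}$, uses $\langle(I-\widetilde{P}_{\mM_{\star}})\eta,\eta\rangle_{M_{\star}}\ge0$ and the range orthogonality $P_{M_{\star}}(\widetilde{P}_{\mM_{\star}}-P_{M_{\star}})=0$, and closes a self-bounding loop with Young's inequality and the absorption $\|\nabla f(u^{\star}_{\phi})\|_{M_{\star}^{-1}}\le L\|\eta\|_{M_{\star}}+\|\nabla f(u^{\star})\|_{M_{\star}^{-1}}$, obtaining \eqref{lem_u_diff_eq01} first and \eqref{lem_u_diff_eq03} last. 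You do the reverse: \eqref{lem_u_diff_eq03} comes first from a contraction-perturbation argument (comparing $u^{\star}_{\phi}=\phi(u^{\star}_{\phi})$ with $\phi(u^{\star})$, using $1-q=\alpha^{\star}\mu$ from Lemma \ref{lem_fixpt}), which avoids the absorption step entirely; the refined bounds then follow from the gradient-annihilation orthogonality $\langle\nabla f(u^{\star}_{\phi})-\nabla f(u^{\star}),P_{M_{\star}}e\rangle=0$ (valid since $P_{M_{\star}}M_{\star}^{-1}\nabla f$ vanishes at both $u^{\star}$ and $u^{\star}_{\phi}$) combined with the squared form of \eqref{conv_Lip_new}, giving a quadratic inequality in $a=\|e\|_{M_{\star}}$, $b=\|\nabla f(u^{\star}_{\phi})-\nabla f(u^{\star})\|_{M_{\star}^{-1}}$, $c=\|(I-P_{M_{\star}})e\|_{M_{\star}}$. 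Your version is more modular and makes transparent that only the infeasible component of $e$ drives the error; you also correctly observe that \eqref{conv_Lip_new} must be read with squared norms (the paper's display omits the squares).

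Two constant-level caveats. First, your stated bound $a\le\frac{L+\mu}{\sqrt{L\mu}}\,c$ is the lossy consequence of first taking $b\le(L+\mu)c$; tracked through, it yields $\|e\|_{M_{\star}}\le\frac{16}{3}\mu^{-1/2}\delta^{\star}\sqrt{\alpha^{\star}}\|\nabla f(u^{\star})\|_{M_{\star}^{-1}}$, which implies \eqref{lem_u_diff_eq01} only when $3\sqrt{\kappa}\ge\frac{16}{3}$, i.e.\ $\kappa\ge(16/9)^{2}\approx3.16$. To recover the stated constants for all $\kappa\ge1$, complete the square in $b$: $\frac{L\mu}{L+\mu}a^{2}\le\max_{b}\bigl(bc-\frac{b^{2}}{L+\mu}\bigr)=\frac{(L+\mu)c^{2}}{4}$, so $a\le\frac{L+\mu}{2\sqrt{L\mu}}c$, which gives the constant $\frac{8}{3}\le3\sqrt{\kappa}$; then deduce \eqref{lem_u_diff_eq02} from $b\le La$ (constant $\frac{8}{3}\sqrt{\kappa}\le3\kappa$) rather than from $b\le(L+\mu)c$. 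Second, your proof of \eqref{lem_u_diff_eq00} routes through \eqref{lem_u_diff_eq03} and therefore uses the stronger hypotheses $\alpha^{\star}\le L^{-1}$ and $\delta^{\star}\le(4\kappa)^{-1}$, whereas the lemma asserts \eqref{lem_u_diff_eq00} under $\alpha^{\star}\le2L^{-1}$ alone; the paper's own proof has exactly the same feature (its intermediate estimate \eqref{lem_u_diff_eq1_new} carries $\|\nabla f(u^{\star}_{\phi})\|_{M_{\star}^{-1}}$, not $2\|\nabla f(u^{\star})\|_{M_{\star}^{-1}}$), so this is not a deficiency relative to the paper, but it is worth flagging.
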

\begin{proof}
For simplicity, we denote $\eta = u^{\star} - u^{\star}_{\phi}$ and $\eta_f = \nabla f(u^{\star} ) - \nabla f(u^{\star}_{\phi})$.
It follows from $P_{M_{\star}} M_{\star}^{-1} \nabla f(u^{\star}_{\phi}) = 0$ and Remark \ref{rem_PMustar} that
\begin{equation}
\label{lem_u_diff_eq1}
\eta = \widetilde{P}_{\mM_{\star}} \eta +  \alpha^{\star} (\widetilde{P}_{\mM_{\star}}M^{-1}_{\star} - P_{M_{\star}} M^{-1}_{\star} )\nabla f(u^{\star}_{\phi}).
\end{equation}
Then, from \eqref{lem_Pi_est_eq04} in Lemma \ref{lem_Pi_est}, we obtain 
\begin{equation}
\label{lem_u_diff_eq1_new}
\| (I - \widetilde{P}_{\mM_{\star}}) \eta \|_{M_{\star}}   \le \alpha^{\star} \delta^{\star}  \| \nabla f(u^{\star}_{\phi}) \|_{M^{-1}_{\star}} .
\end{equation}
Next, we rewrite \eqref{lem_u_diff_eq1} to the following identity by using the definition of $\eta_f$:
\begin{equation}
\label{lem_u_diff_eq2}
\eta = \widetilde{P}_{\mM_{\star}} \eta - \alpha^{\star} \widetilde{P}_{\mM_{\star}}M^{-1}_{\star}\eta_f +  \alpha^{\star} (\widetilde{P}_{\mM_{\star}}M^{-1}_{\star} - P_{M_{\star}} M^{-1}_{\star} )\nabla f(u^{\star}).
\end{equation}
Noticing that $\langle (I - \widetilde{P}_{\mM_{\star}})\eta, \eta \rangle_{M_{\star}} \ge 0 $ by \eqref{lem_Pi_est_eq011}, and taking the $M_{\star}$-inner product of \eqref{lem_u_diff_eq2} with $\eta$,
we obtain
$\langle \eta_f , \widetilde{P}_{\mM_{\star}} \eta \rangle \le \langle (\widetilde{P}_{\mM_{\star}}M^{-1}_{\star} - P_{M_{\star}} M^{-1}_{\star} )\nabla f(u^{\star}), \eta \rangle_{M_{\star}} $,
which implies
\begin{equation}
\begin{split}
\label{lem_u_diff_eq3}
\mu_{f,M_{\star}} \| \eta \|^2_{M_{\star}} \le & \langle \eta_f, \eta \rangle 
\le  \underbrace{ \langle  \eta_f , \eta -  \widetilde{P}_{\mM_{\star}} \eta \rangle }_{R_1} 
+ \underbrace{ \langle (  \widetilde{P}_{\mM_{\star}} - P_{M_{\star}}  )M^{-1}_{\star} \nabla f(u^{\star}), \eta \rangle_{M_{\star}} }_{R_2},
\end{split}
\end{equation}
where we have also used \eqref{conv_Lip}.
Employing \eqref{lem_u_diff_eq1_new} with \eqref{conv_Lip} and \eqref{eq: Breg bound}, we have 
$$
R_1 \le  \alpha^{\star} \delta^{\star} L_{f,M_{\star}} \| \nabla f(u^{\star}_{\phi}) \|_{M^{-1}_{\star}} \| \eta \|_{M_{\star}}.
$$
As for $R_2$, we use \eqref{lem_Pi_est_eq03} in Lemma \ref{lem_Pi_est} and $\delta^{\star}\le 1/4$ to conclude 
$\| (I - P_{M_{\star}})\eta \|_{M_{\star}} \le \frac{4}{3} \| (I - \widetilde{P}_{\mM_{\star}})\eta \|_{M_{\star}}$.
Then, using \eqref{lem_Pi_est_eq04} with \eqref{lem_u_diff_eq1_new}, we have
\begin{equation}
\begin{split}
\label{lem_u_diff_eq4}
R_2 
& = \langle (  \widetilde{P}_{\mM_{\star}} - P_{M_{\star}}  )M^{-1}_{\star} \nabla f(u^{\star}), \eta - P_{M_{\star}} \eta \rangle_{M_{\star}} \\
& \le \| ( \widetilde{P}_{\mM_{\star}} - P_{M_{\star}}  )M^{-1}_{\star} \nabla f(u^{\star}) \|_{M_{\star}} \| \eta - P_{M_{\star}} \eta  \|_{M_{\star}} \\
& \le \frac{4(\delta^{\star})^2 \alpha^{\star}}{3} \| \nabla f(u^{\star}) \|_{M^{-1}_{\star}} \| \nabla f(u^{\star}_{\phi}) \|_{M^{-1}_{\star}}.
\end{split}
\end{equation}
Putting these estimates into \eqref{lem_u_diff_eq3} and using Young's inequality, we obtain
\begin{equation}
\begin{split}
\label{lem_u_diff_eq5}
\mu_{f,M_{\star}} \| \eta \|^2_{M_{\star}} & \le 
(\delta^{\star}\alpha^{\star} )^2 L^2_{f,M_{\star}} \mu^{-1}_{f,M_{\star}}   \| \nabla f(u^{\star}_{\phi}) \|^2_{M^{-1}_{\star}} 
 + \frac{\mu_{f,M_{\star}}}{4} \| \eta \|^2_{M_{\star}} \\
& + \frac{2(\delta^{\star})^2 \alpha^{\star}}{3} \left( \| \nabla f(u^{\star}_{\phi}) \|^2_{M^{-1}_{\star}} +  \| \nabla f(u^{\star}) \|^2_{M^{-1}_{\star}} \right).
\end{split}
\end{equation}
Notice 
$$
\| \nabla f(u^{\star}_{\phi}) \|_{M^{-1}_{\star}} \le \| \eta_f \|_{M^{-1}_{\star}} + \| \nabla f(u^{\star}) \|_{M^{-1}_{\star}} \le L_{f,M_{\star}} \| \eta \|_{M_{\star}} + \| \nabla f(u^{\star}) \|_{M^{-1}_{\star}}
$$
Using this estimate in \eqref{lem_u_diff_eq5} with $\kappa_{f,M_{\star}} \ge 1$ and $\alpha^{\star} \le L^{-1}_{f,M_{\star}}$, we have 
\begin{equation}
\begin{split}
\label{lem_u_diff_eq6}
\frac{3\mu_{f,M_{\star}}}{4}  \| \eta \|^2_{M_{\star}} &
 \le \frac{ (\delta^{\star})^2 \alpha^{\star}(3\kappa_{f,M_{\star}} + 2 ) }{3}  \| \nabla f(u^{\star}_{\phi}) \|^2_{M^{-1}_{\star}} 
 + \frac{2(\delta^{\star})^2 \alpha^{\star}}{3}  \| \nabla f(u^{\star}) \|^2_{M^{-1}_{\star}} \\
& \le \frac{10(\delta^{\star})^2 \alpha^{\star} \kappa_{f,M_{\star}} L^2_{f,M_{\star}}}{3} \| \eta \|^2_{M_{\star}} 
+ 4( \delta^{\star})^2 \alpha^{\star} \kappa_{f,M_{\star}}\| \nabla f(u^{\star}) \|^2_{M^{-1}_{\star}}.
\end{split}
\end{equation}
We conclude \eqref{lem_u_diff_eq01} from \eqref{lem_u_diff_eq6}
with $10(\delta^{\star})^2 \alpha^{\star} \kappa_{f,M_{\star}} L^2_{f,M_{\star}}/3 \le 5\mu_{f,M_{\star}}/24 \le \mu_{f,M_{\star}}/4$ 
by $\delta^{\star} \le (4\kappa_{f,M_{\star}})^{-1}$ and $\alpha^{\star} \le L^{-1}_{f,M_{\star}}$.
At last, \RG{\eqref{lem_u_diff_eq02} follows from} \eqref{eq: Breg bound} and \eqref{conv_Lip}, which yields \eqref{lem_u_diff_eq03}.
\end{proof}


In the next two sections, we shall proceed to prove the convergence of the IPPGD method \eqref{PPGD0}.
As $u^{\star}_{\phi}$ relies on the final step size $\alpha^{\star}$, it is reasonable that $\alpha$ cannot keep oscillating to the end.
In fact, our proof can handle the case of variable step size by assuming $\alpha$ exponentially converging to $\alpha^{\star}$;
namely, for some positive constants $r_1$ and $r_2$, there holds
\begin{equation*}
\label{asump_lam_eq1}
|\alpha(t) - \alpha^{\star}| \le r_1e^{-r_2 t}.
\end{equation*}
However, to facilitate the ease of exposition but without loss of generality, 
we only consider the fixed step size in the subsequent convergence analysis.


\section{The convergence analysis at the continuous level}
\label{sec:conv_contin}

In this section, we address the exponential convergence at the continuous level.




\subsection{Assumptions on the metric sequence}
\label{assum_metric}


The following assumptions are introduced to establish the behavior of the metric sequence and the impact of inexactness levels. 

\begin{asump}
\label{asump_M}
Given a time-dependent sequence of metrics $\mM(t) = \{ M(t), \widetilde{S}(t) \}$, assume:
\begin{itemize}
\item[\textbf{(H1)}] \label{asump_M_h1} 
\RG{There exists a metric set $\mM_{\star} = \{ M_{\star}, \widetilde{S}_{\star} \}$ and upper-bounded functions $\Theta(t)$ and $\widetilde{\Theta}(t)$,
i.e., $\Theta(t) \in [0,\theta]$ and $\widetilde{\Theta}(t) \in [0,\tilde{\theta}]$, $\forall t\in [0,\infty)$, for positive constants $\theta$ and $\tilde{\theta}$, such that }
\begin{subequations}
\label{asump_M_eq2}
\begin{align}
 & M(t) - M_{\star} \preccurlyeq  \Theta(t) M_{\star}, ~~~~~~ M_{\star} - M(t) \preccurlyeq  \Theta(t) M(t), ~~~ \forall t\in [0,\infty], \label{asump_M_eq21} \\
& \widetilde{S}^{-1}(t) - \widetilde{S}^{-1}_{\star} \preccurlyeq \widetilde{\Theta}(t) \widetilde{S}^{-1}_{\star}, ~~~ 
\widetilde{S}^{-1}_{\star} - \widetilde{S}^{-1}(t) \preccurlyeq \widetilde{\Theta}(t) \widetilde{S}^{-1}(t), ~~~~  \forall t\in [0,\infty].\label{asump_M_eq22}
\end{align} 
\end{subequations}
Denote $\Theta_m(t): = \max\{ \Theta(t), \widetilde{\Theta}(t) \}$ and $\theta_m = \max\{ \theta, \tilde{\theta} \}$.
\item[\textbf{(H2)}] \label{asump_M_h2} 
There is a time-dependent sequence $\delta(t)$ to describe the inexactness level with a uniform upper bound $\delta_{\max}$, i.e., $\delta(t)\le \delta_{\max}$, $\forall t\ge 0$, such that
\begin{equation}
\label{asump_M_eq3}
(1-\delta(t)) \widetilde{S}(t) \preccurlyeq S(t) \preccurlyeq \widetilde{S}(t), ~~~ \forall t \in[0,\infty],
\end{equation}
where $t=\infty$ corresponds to the case of $\widetilde{S}_{\star}$, $S_{\star}$, and $\delta^{\star}$.
\item[\textbf{(H3)}] \label{asump_M_h2_new}  
There exists a uniform constant $K_S$ independent of $t$ such that
\begin{equation}
\label{asump_M_eq4}
-K_S \Theta_m(t) \delta_{\max} S^{-1}_{\star} \preccurlyeq (\widetilde{S}^{-1}-S^{-1}) - (\widetilde{S}^{-1}_{\star} - S^{-1}_{\star})   \preccurlyeq K_S \Theta_m(t) \delta_{\max} S^{-1}_{\star}.
\end{equation}
\item[\textbf{(H4)}] \label{asump_M_h3} 
Let $u^{\star}_{\phi}$ be the fixed point of $\phi$ in \eqref{phi_rhs}. 
\RG{The} sequence $\Theta(t)$ in \hyperref[asump_M_h1]{\textbf{(H1)}} is assumed to satisfy
\begin{equation}
\label{Thetat_bound}
\RG{\Theta_m(t) \le K_{\theta} \sqrt{ D_f(u(t),u^{\star}_{\phi}) },}
\end{equation}
where $K_{\theta}$ is a uniform constant independent of $t$.
\end{itemize}
\end{asump}

\begin{remark}
\label{rem_assump}
\RGI{
We shall see that the convergence analysis below highly relies on these assumptions.
It is still unclear whether the convergence still holds without such assumptions.
Indeed, their verification is problem-dependent; see Section \ref{sec:num} for some examples.
Here, let us mention several notable remarks regarding these assumptions.}
\begin{itemize}

\item
\RGI{
Assumptions \hyperref[asump_M_h1]{\textbf{(H1)}} and \hyperref[asump_M_h3]{\textbf{(H4)}} together appear slightly restrictive,
as they yield
\begin{equation}
  \label{eq_Mu}
M(t) - M_{\star} \preccurlyeq   K_{\theta} \sqrt{ D_f(u(t),u^{\star}_{\phi}) } M_{\star}.
\end{equation}
This inequality trivially shows that if $u$ converges, then $M$ converges. 
The non-trivial aspect, however, is that it cannot directly imply the convergence of $u$ itself. 
The convergence of $u$ certainly represents one of the main challenges in analysis.
In addition, Assumption \hyperref[asump_M_h2]{\textbf{(H2)}} is standard in the sense that it just describes the preconditioning effectiveness of $\widetilde{S}$ and $S$.
Moreover, Assumption \hyperref[asump_M_h3]{\textbf{(H3)}} is a technical Lipschitz-type condition on the difference between
the inverse approximate and exact Schur complements. 
Its verification is also the most difficult one for the PDE case and relies on certain critical properties of specific preconditioners,
which is discussed in Appendix \ref{append_assump_verify}.
}


\item Notice that these conditions \textbf{DO NOT} require $M$ and $\widetilde{M}$ to converge to $M_{\star}$ and $\widetilde{M}_{\star}$,
i.e., $\Theta(t)$ and $\widetilde{\Theta}(t)$ are not assumed to converge to zero.
Notice that we \textbf{DO NOT} assume $\lim_{t\rightarrow \infty} \delta(t) = \delta^{\star}$.
In fact, we do not assume any continuity for $\delta$.

\item All these assumptions are scale-invariant; 
namely all the constants in those inequalities stay unchanged if the $\{\mM(t)\}$ 
is replaced by $\{\beta \mM(t)\}$ with a scaling factor $\beta$. 

\item Constructing $\widetilde{S}(t)$ should rely on $u(t)$ in practice, i.e., $\widetilde{S}(t) = \widetilde{S}(u(t))$. 
Thus, Assumption \hyperref[asump_M_h3]{\textbf{(H3)}} actually states Lipschitz continuity of $(\widetilde{S}^{-1}-S^{-1})$ in a certain sense.

\item \RG{All the estimates are made carefully so that the key constants $\theta_m$, $\delta$, $K_S$ and $K_{\theta}$ 
will appear explicitly in the main Theorems \ref{thm_lyapunov_disc}-\ref{thm_str_cont_lya2_disc}, indicating 
how they affect the convergence rate.}
 
\end{itemize}
\end{remark}

With Assumption \ref{asump_M}, we prepare the following results.
\begin{lemma}
\label{lem_spect}
Under \hyperref[asump_M_h1]{\textbf{(H1)}} in Assumption \ref{asump_M}, there holds
\begin{subequations}
\label{lem_spect_eq0}
\begin{align}
    &  \lambda(M(t) M_{\star}^{-1}) \in \left[ (1+\Theta(t))^{-1}, (1+\Theta(t)) \right],  ~~~~\lambda(\widetilde{S}(t) \widetilde{S}_{\star}^{-1})   \in \left[ (1+\widetilde{\Theta}(t))^{-1}, (1+\widetilde{\Theta}(t)) \right], \label{lem_spect_eq02}  \\
   &   M(t) \preccurlyeq  (1+ \theta) M_{\star}, ~~~~~~~~\widetilde{S}(t) \preccurlyeq  (1+ \tilde{\theta}) \widetilde{S}_{\star},
   ~~~~~~~~ \forall t \ge 0. 
   \label{lem_pi_est_eq01} 
 \end{align}
\end{subequations}
Further assume $ {S}(t) \preccurlyeq \widetilde{S}(t)$, $\forall t \ge 0$, then 
\begin{equation}
\label{lem_pi_est_eq02}
\| (\widetilde{P}_{\mM(t)} - \widetilde{P}_{\mM_{\star}})u  \|_{M(t)} \le 2\sqrt{1+\theta_m} \Theta_m(t) \| (I-\widetilde{P}_{\mM_{\star}}) u \|_{M_{\star}}.
\end{equation}
Under Assumption \hyperref[asump_M_h2_new]{\textbf{(H3)}}, there holds
\begin{equation}
\label{lem_pi_est_eq03}
\| \left[ (\widetilde{P}_{\mM(t)} - P_{M(t)}) - (\widetilde{P}_{\mM_{\star}} - P_{M_{\star}}) \right]u \|_{M(t)} \le 2\sqrt{1+\theta_m} K_S \delta_{\max} \Theta_m(t) \| u \|_{M_{\star}}.
\end{equation}
\end{lemma}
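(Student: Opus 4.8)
The plan is to derive the four claims in order, systematically converting each Loewner-order hypothesis into a weighted-norm estimate through the equivalence \eqref{spd_inequa} and Lemma \ref{lem_SPD_general}, and exploiting the identity $\|M^{-1}B^{T}z\|_{M}=\|z\|_{S}$ (with $S=BM^{-1}B^{T}$) to trade $B^{T}$-weighted norms for Schur-weighted ones. The spectral bounds \eqref{lem_spect_eq02} come first: I would rewrite the two inequalities for $M$ in \textbf{(H1)} as $M(t)\preccurlyeq(1+\Theta(t))M_{\star}$ and $M_{\star}\preccurlyeq(1+\Theta(t))M(t)$ and read the interval off \eqref{spd_inequa}; for $\widetilde{S}$ I apply the same reasoning to \eqref{asump_M_eq22}, noting that the eigenvalues of $\widetilde{S}(t)\widetilde{S}^{-1}_{\star}$ are the reciprocals of those of $\widetilde{S}^{-1}(t)\widetilde{S}_{\star}$. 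Then \eqref{lem_pi_est_eq01} is immediate from $\Theta(t)\le\theta$, $\widetilde{\Theta}(t)\le\tilde{\theta}$ and positivity of $M_{\star},\widetilde{S}_{\star}$.

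For the core estimate \eqref{lem_pi_est_eq02} I would use \eqref{soblev_grad_5} to split the difference along the two metric variations,
\[
\widetilde{P}_{\mM(t)}-\widetilde{P}_{\mM_{\star}}=(M^{-1}_{\star}-M^{-1}(t))B^{T}\widetilde{S}^{-1}_{\star}B+M^{-1}(t)B^{T}(\widetilde{S}^{-1}_{\star}-\widetilde{S}^{-1}(t))B.
\]
For the first summand, Lemma \ref{lem_SPD_general} with $(Q,R)=(M_{\star},M(t))$ — whose hypothesis is the first interval of \eqref{lem_spect_eq02} — gives $(M^{-1}_{\star}-M^{-1}(t))M(t)(M^{-1}_{\star}-M^{-1}(t))\preccurlyeq\Theta^{2}(t)M^{-1}(t)$, so its $M(t)$-norm is at most $\Theta(t)\|\widetilde{S}^{-1}_{\star}Bu\|_{S(t)}$ (by the Schur identity), and $S(t)\preccurlyeq(1+\Theta(t))S_{\star}$ converts this into $\sqrt{1+\theta_m}\,\Theta(t)\|\widetilde{S}^{-1}_{\star}Bu\|_{S_{\star}}=\sqrt{1+\theta_m}\,\Theta(t)\|(I-\widetilde{P}_{\mM_{\star}})u\|_{M_{\star}}$. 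The second summand is handled symmetrically with $(Q,R)=(\widetilde{S}_{\star},\widetilde{S}(t))$, producing a factor $\widetilde{\Theta}(t)\le\Theta_m(t)$ and, after \textbf{(H2)} and \eqref{lem_spect_eq02} are used to transport the Schur-weighted norm back to $\|(I-\widetilde{P}_{\mM_{\star}})u\|_{M_{\star}}$, the same right-hand side. Summing the two contributions yields the coefficient $2\sqrt{1+\theta_m}$.

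The estimate \eqref{lem_pi_est_eq03} follows the same pattern once I record from \eqref{soblev_grad_4}–\eqref{soblev_grad_5} that $\widetilde{P}_{\mM}-P_{M}=M^{-1}B^{T}(S^{-1}-\widetilde{S}^{-1})B$. Writing $F(t)=S^{-1}(t)-\widetilde{S}^{-1}(t)$ and $F_{\star}=S^{-1}_{\star}-\widetilde{S}^{-1}_{\star}$, I telescope the double difference as
\[
(M^{-1}(t)-M^{-1}_{\star})B^{T}F(t)B+M^{-1}_{\star}B^{T}(F(t)-F_{\star})B.
\]
In the first term the $M$-difference contributes $\Theta(t)$ via Lemma \ref{lem_SPD_general}, while \textbf{(H2)} at time $t$ forces $0\preccurlyeq F(t)\preccurlyeq\tfrac{\delta(t)}{1-\delta(t)}\widetilde{S}^{-1}(t)$, so $F(t)$ carries a factor $O(\delta(t))$; in the second term \textbf{(H3)} bounds $F(t)-F_{\star}$ directly by $K_{S}\Theta_m(t)\delta(t)S^{-1}_{\star}$. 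The residual $B$-weighted factors are reduced to $\|u\|_{M_{\star}}$ through $\|z\|_{S}=\|M^{-1}B^{T}z\|_{M}$ together with the standard inequality $B^{T}S^{-1}_{\star}B\preccurlyeq M_{\star}$ (the $M_{\star}$-orthogonal projection bound), so both terms are $O(K_{S}\Theta_m(t)\delta(t)\|u\|_{M_{\star}})$, giving \eqref{lem_pi_est_eq03}.

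The spectral translations are routine; the main obstacle is the bookkeeping in \eqref{lem_pi_est_eq02} and \eqref{lem_pi_est_eq03}, where three distinct metrics — $M$, $\widetilde{S}$ and $S=BM^{-1}B^{T}$ — and their starred counterparts must all be reconciled against a single right-hand-side norm. The two devices that keep this tractable are Lemma \ref{lem_SPD_general}, which turns every inverse-difference into a clean spectral factor ($\Theta$, $\widetilde{\Theta}$ or, via \textbf{(H3)}, $K_{S}\Theta_m\delta$), and the identity $\|M^{-1}B^{T}z\|_{M}=\|z\|_{S}$, which lets each $B^{T}$-weighted term be read as a Schur-weighted one and then transported between metrics by \eqref{lem_spect_eq02} and \textbf{(H2)}.
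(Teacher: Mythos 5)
Your proposal is correct and takes essentially the same route as the paper's own proof: the identical two-term splittings of $\widetilde{P}_{\mM(t)}-\widetilde{P}_{\mM_{\star}}$ and of the double difference in \eqref{lem_pi_est_eq03}, with Lemma \ref{lem_SPD_general}, \textbf{(H2)}, \textbf{(H3)} and the projection bound $B^{T}S^{-1}B\preccurlyeq M$ doing exactly the same work. Your bookkeeping through the identity $\|M^{-1}B^{T}z\|_{M}=\|z\|_{S}$ (in particular $\|\widetilde{S}_{\star}^{-1}Bu\|_{S_{\star}}=\|(I-\widetilde{P}_{\mM_{\star}})u\|_{M_{\star}}$, which handles the $M$-difference term exactly) is marginally tidier than the paper's chain of Loewner bounds, but it is the same argument.
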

\begin{proof}
The first one in \eqref{lem_spect_eq02} follows from \eqref{asump_M_eq21} and \eqref{spd_inequa}.
The second one follows from a similar argument, and \eqref{lem_pi_est_eq01} is trivial.

We then proceed to estimate \eqref{lem_pi_est_eq02}. 
To simplify the notations, we shall ignore the dependence of those quantities on $t$.
We write down
\begin{equation}
 \label{lem_pi_est_eq1}
 \begin{split}
\widetilde{P}_{\mM} -  \widetilde{P}_{\mM_{\star}}  =  -\underbrace{M^{-1} (B^T \widetilde{S}^{-1} B - B^T \widetilde{S}_{\star}^{-1} B) }_{=: R_1} 
- \underbrace{ (M^{-1} - M_{\star}^{-1}) B^T \widetilde{S}_{\star}^{-1} B }_{=:R_2}.
\end{split}
\end{equation}
We first estimate $R_1$ above. 
Using Lemma \ref{lem_SPD_general}, we obtain
\begin{equation}
\begin{split}
\label{lem_pi_est_eq2}
R^T_1 M R_1 & =  B^T( \widetilde{S}^{-1} - \widetilde{S}_{\star}^{-1} ) \widetilde{S} ( \widetilde{S}^{-1} - \widetilde{S}_{\star}^{-1} ) B 
\preccurlyeq  \widetilde{\Theta}^2 B^T \widetilde{S}^{-1} B \preccurlyeq  (1+\tilde{\theta}) \widetilde{\Theta}^2 B^T \widetilde{S}^{-1}_{\star} B,
\end{split}
\end{equation}
which gives the estimates of $R_1$.
As for $R_2$ in \eqref{lem_pi_est_eq1}, by Lemma \ref{lem_SPD_general} with \eqref{lem_spect_eq02}, we have
\begin{equation}
\label{lem_pi_est_eq5}
(M^{-1} - M_{\star}^{-1}) M (M^{-1} - M_{\star}^{-1}) \preccurlyeq  \Theta^2  M^{-1}. 
\end{equation}
Then, due to \eqref{lem_pi_est_eq01} and $S_{\star} \preccurlyeq  \widetilde{S}_{\star}$, we obtain
\begin{equation}
\begin{split}
\label{lem_pi_est_eq6}
R^T_2 M R_2 
& \preccurlyeq \Theta^2  B^T \widetilde{S}_{\star}^{-1} B  M^{-1} B^T \widetilde{S}_{\star}^{-1} B
=  \Theta^2  B^T \widetilde{S}_{\star}^{-1} S \widetilde{S}_{\star}^{-1} B 
 \preccurlyeq   \Theta^2  (1+\theta) B^T \widetilde{S}_{\star}^{-1}  B .
\end{split}
\end{equation}
Combining \eqref{lem_pi_est_eq2} and \eqref{lem_pi_est_eq6} finishes the proof.

At last, for \eqref{lem_pi_est_eq03} we notice 
\begin{equation}
\begin{split}
\label{lem_pi_est_eq7}
 \left[ (\widetilde{P}_{\mM} - P_{M}) - (\widetilde{P}_{\mM_{\star}} - P_{M_{\star}}) \right] & =  - (M^{-1} - M^{-1}_{\star})B^T(\widetilde{S}^{-1}-S^{-1})B \\
 - M^{-1}_{\star}B^T & \left( (\widetilde{S}^{-1}-S^{-1}) - (\widetilde{S}^{-1}_{\star} - S^{-1}_{\star}) \right) B 
 =: - R_3 - R_4.
 \end{split}
\end{equation}
Using Lemma \ref{lem_SPD_general} with a similar argument to \eqref{lem_pi_est_eq5}, we have
\begin{equation}
\label{lem_pi_est_eq8}
R^T_3 M R_3 \preccurlyeq \Theta^2 B^T(\widetilde{S}^{-1}-S^{-1}) S (\widetilde{S}^{-1}-S^{-1})B 
 \preccurlyeq \delta^2 \Theta^2 M  \preccurlyeq (1+\theta) \delta^2 \Theta^2 M_{\star}.
\end{equation}
In addition, using Assumption \hyperref[asump_M_h2_new]{\textbf{(H3)}} with a similar argument to Lemma \ref{lem_SPD_general}, 
we obtain 
\begin{equation}
\begin{split}
\label{lem_pi_est_eq9}
R^T_4 M R_4  & \preccurlyeq (1+\theta) B^T \left( (\widetilde{S}^{-1}-S^{-1}) - (\widetilde{S}^{-1}_{\star} - S^{-1}_{\star}) \right) S_{\star} \left( (\widetilde{S}^{-1}-S^{-1}) - (\widetilde{S}^{-1}_{\star} - S^{-1}_{\star}) \right) B \\
& \preccurlyeq (1+\theta) K_S^2 \delta^2_{\max} \Theta^2_m B^T S_{\star}^{-1} B  \preccurlyeq (1+\theta) K_S^2 \delta^2_{\max} \Theta^2_m M_{\star}.
\end{split}
\end{equation}
\end{proof}

\subsection{Lyapunov analysis}
\label{subsec:main_results}

To facilitate the discussion, we also introduce the following notation
\begin{equation}
\begin{split}
\label{xi_notat}
& \xi(t) = u(t) - \alpha M^{-1} \nabla f(u(t)), 
 ~~~~ \xi^{\star} = u^{\star}_{\phi} - \alpha M^{-1}_{\star} \nabla f(u^{\star}_{\phi}) \\
& \zeta(t) = u(t) - u^{\star}_{\phi}, 
~~~~~ \zeta_f(t) = \nabla f(u(t)) - \nabla f(u^{\star}_{\phi}) 
\end{split}
\end{equation}
which will be frequently used.
Notice that $\xi(t) \rightarrow \xi^{\star}$ and $ \zeta(t) \rightarrow 0$ if $u(t) \rightarrow u^{\star}_{\phi}$. 
In the following discussion, for simplicity, 
we shall drop ``$(t)$" if there is no danger of causing confusion.

Let us recall the following trivial result: 
given two linear symmetric operators $Q$ and $R$ satisfying $c_1 Q \preccurlyeq  R \preccurlyeq c_2 Q$, there holds
\begin{equation} 
\label{Lmu_M_ineq}
L_{f,Q} \le c_2 L_{f,R}, ~~~~  \mu_{f,R}  \le c_1^{-1} \mu_{f,Q}, ~~~~ \kappa_{f,Q} \le c_2/c_1 \kappa_{f,R}.
\end{equation}
Assumption  \hyperref[asump_M_h1]{\textbf{(H1)}} with \eqref{Lmu_M_ineq} enables us to unify the potential metrics to be $M_{\star}$ up to a constant depending only on $\theta_m$:
\begin{equation}
\label{Lmu_M_ineq2}
\mu_{f,M} \ge (1+\theta_m)^{-1} \mu_{f,M_{\star}} ,~~~ L_{f,M}\le (1+\theta_m) L_{f,M_{\star}}, ~~~  \kappa_{f,M(t)} \le (1+\theta_m)^2 \kappa_{f,M_{\star}}.
\end{equation}


With these preparations, 
we then present some useful estimates.
\begin{lemma}
\label{lem_uxi}
Under \hyperref[asump_M_h1]{\textbf{(H1)}}, \hyperref[asump_M_h2]{\textbf{(H2)}} and \hyperref[asump_M_h2_new]{\textbf{(H3)}} in Assumption \ref{asump_M}, 
and $\alpha^{\star} \le L_{f,M_{\star}}^{-1}$ and $\delta^{\star}< (4\kappa_{f,M_{\star}})^{-1}$,
there hold
\begin{subequations}
\label{lem_uxi_eq0}
\begin{align}
  &    \| u^{\star}_{\phi} - \widetilde{P}_{\mM} ( u^{\star}_{\phi} - \alpha M^{-1}(t) \nabla f(u^{\star}_\phi) ) \|_{M(t)}  
   \le  \sqrt{1+\theta_m} \alpha p  \| \nabla f(u^{\star}) \|_{M^{-1}_{\star}} \Theta_m , \label{lem_uxi_eq01}  \\
  & \| (  \widetilde{P}_{\mM_{\star}} M_{\star}^{-1} -   \widetilde{P}_{\mM} M^{-1} ) \nabla f(u^{\star}_\phi)  \|_{M_{\star}} \le  
 \sqrt{1+\theta_m}  p  \| \nabla f(u^{\star}) \|_{M^{-1}_{\star}} \Theta_m;   \label{lem_uxi_eq03}
\end{align}
where the function $p$ is given by
\begin{align}
\label{lem_uxi_eq04}
   p(\delta_{\max},\delta^{\star};\kappa_{f,M_{\star}},K_S) := (9\kappa_{f,M_{\star}}+4)\delta^{\star} 
   + (1 + 2K_S) \delta_{\max} .
\end{align}
\end{subequations}
\end{lemma}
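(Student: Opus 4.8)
The plan is to reduce both estimates to the fixed-point identity $u^{\star}_{\phi} = \widetilde{P}_{\mM_{\star}}(u^{\star}_{\phi} - \alpha M_{\star}^{-1}\nabla f(u^{\star}_{\phi}))$ (with the fixed step size $\alpha=\alpha^{\star}$) together with one structural observation. Writing $g:=\nabla f(u^{\star}_{\phi})$, Remark \ref{rem_PMustar} gives $P_{M_{\star}}M_{\star}^{-1}g=0$, which is equivalent to $g\perp\ker(B)$. Since this condition is metric-independent, it forces $P_{M}M^{-1}g=0$ for \emph{every} SPD metric $M$, in particular for $M=M(t)$. This is the crucial fact that makes the exact projected-gradient contributions cancel, leaving only inexactness/metric-deviation remainders.

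I would prove \eqref{lem_uxi_eq03} first. Inserting exact projections,
\begin{equation*}
(\widetilde{P}_{\mM_{\star}}M_{\star}^{-1} - \widetilde{P}_{\mM}M^{-1})g = \underbrace{(P_{M_{\star}}M_{\star}^{-1} - P_{M}M^{-1})g}_{=\,0} + (\widetilde{P}_{\mM_{\star}} - P_{M_{\star}})M_{\star}^{-1}g - (\widetilde{P}_{\mM} - P_{M})M^{-1}g,
\end{equation*}
where the first bracket vanishes by the observation above. The remaining difference I split as $[(\widetilde{P}_{\mM_{\star}} - P_{M_{\star}}) - (\widetilde{P}_{\mM} - P_{M})]M_{\star}^{-1}g + (\widetilde{P}_{\mM} - P_{M})(M_{\star}^{-1} - M^{-1})g$. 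The first term is controlled directly by \eqref{lem_pi_est_eq03} in Lemma \ref{lem_spect} (with $u=M_{\star}^{-1}g$, so $\|u\|_{M_{\star}}=\|g\|_{M_{\star}^{-1}}$), contributing the $K_{S}\delta\Theta_{m}$ factor. For the second term I would record the $M$-norm contraction $\|(\widetilde{P}_{\mM}-P_{M})v\|_{M}\le\delta\|v\|_{M}$, which follows from
\begin{equation*}
(\widetilde{P}_{\mM}-P_{M})^{T}M(\widetilde{P}_{\mM}-P_{M}) = B^{T}(\widetilde{S}^{-1}-S^{-1})S(\widetilde{S}^{-1}-S^{-1})B \preccurlyeq \delta^{2}B^{T}S^{-1}B \preccurlyeq \delta^{2}M
\end{equation*}
via Lemma \ref{lem_SPD_general} and \textbf{(H2)}; combining it with the metric-gap bound $\|(M_{\star}^{-1}-M^{-1})g\|_{M}\le\Theta\|g\|_{M^{-1}}$ obtained from Lemma \ref{lem_SPD_general} (as in \eqref{lem_pi_est_eq5}) gives the $\delta\Theta$ factor. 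Finally I convert $\|g\|_{M^{-1}}$ to $\|g\|_{M_{\star}^{-1}}$ through \eqref{lem_spect_eq02} and bound $\|g\|_{M_{\star}^{-1}}=\|\nabla f(u^{\star}_{\phi})\|_{M_{\star}^{-1}}\le 2\|\nabla f(u^{\star})\|_{M_{\star}^{-1}}$ by \eqref{lem_u_diff_eq03}; collecting the $\theta_{m}$-dependent constants yields \eqref{lem_uxi_eq03}.

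For \eqref{lem_uxi_eq01} I would use the fixed-point identity to rewrite the argument as
\begin{equation*}
u^{\star}_{\phi} - \widetilde{P}_{\mM}(u^{\star}_{\phi} - \alpha M^{-1}g) = (\widetilde{P}_{\mM_{\star}} - \widetilde{P}_{\mM})u^{\star}_{\phi} - \alpha(\widetilde{P}_{\mM_{\star}}M_{\star}^{-1} - \widetilde{P}_{\mM}M^{-1})g.
\end{equation*}
The second piece is exactly $\alpha$ times \eqref{lem_uxi_eq03}, already estimated. For the first piece I apply \eqref{lem_pi_est_eq02}, which leaves $\|(I-\widetilde{P}_{\mM_{\star}})u^{\star}_{\phi}\|_{M_{\star}}$; the key point is that this residual is $O(\alpha\delta^{\star})$ rather than $O(1)$, since the fixed-point identity gives $(I-\widetilde{P}_{\mM_{\star}})u^{\star}_{\phi} = -\alpha\widetilde{P}_{\mM_{\star}}M_{\star}^{-1}g = -\alpha(\widetilde{P}_{\mM_{\star}}-P_{M_{\star}})M_{\star}^{-1}g$ (again using $P_{M_{\star}}M_{\star}^{-1}g=0$), whose $M_{\star}$-norm is at most $\alpha\delta^{\star}\|g\|_{M_{\star}^{-1}}$ by the same $\delta$-contraction. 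Multiplying by the $\Theta$ from \eqref{lem_pi_est_eq02} produces the $\delta^{\star}\Theta$ contribution, and adding the $\alpha\times$\eqref{lem_uxi_eq03} term gives the combined coefficient $p$.

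The main obstacle is recognizing and exploiting the two cancellations: that $P_{M}M^{-1}\nabla f(u^{\star}_{\phi})=0$ holds for \emph{all} $M$ (metric-independence of $g\perp\ker B$), and that $(I-\widetilde{P}_{\mM_{\star}})u^{\star}_{\phi}$ is itself an inexactness residual of size $O(\delta^{\star})$. Without these, the naive estimates of $(\widetilde{P}_{\mM_{\star}} - \widetilde{P}_{\mM})u^{\star}_{\phi}$ and of the projected-gradient difference would only be $O(1)$, and neither the $\Theta_{m}$ prefactor nor the smallness encoded in $p$ would appear. The remainder is careful bookkeeping of the $(1+\theta_{m})$ norm-equivalence constants, which I would keep loose and absorb into the stated $\sqrt{1+\theta_{m}}$ prefactor and the common upper bound $p$ shared by both inequalities.
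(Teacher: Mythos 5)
Your proposal is correct in its essentials, but it takes a genuinely different route from the paper's for the core estimate \eqref{lem_uxi_eq03}. The paper splits $\nabla f(u^{\star}_{\phi}) = \nabla f(u^{\star}) + [\nabla f(u^{\star}_{\phi}) - \nabla f(u^{\star})]$, applies the cancellation $P_M M^{-1}\nabla f(u^{\star}) = 0$ (valid for every SPD $M$ by \eqref{opt_cond}) only to the $\nabla f(u^{\star})$ piece, and controls the gradient-difference piece via \eqref{lem_u_diff_eq02}; that is exactly where the $9\kappa_{f,M_{\star}}\delta^{\star}$ contribution to $p$ originates. You instead apply the cancellation directly at $u^{\star}_{\phi}$, claiming $P_M M^{-1}\nabla f(u^{\star}_{\phi}) = 0$ for \emph{every} SPD $M$. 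Note that this claim contradicts the paper's own Remark \ref{rem_PMustar}, which asserts the condition ``only holds for $M_{\star}$'' --- but you are right and the remark is wrong (or at least misleadingly stated): $P_{M_{\star}}M_{\star}^{-1}g = 0$ forces $g = B^{T}\left(S_{\star}^{-1}BM_{\star}^{-1}g\right) \in \mathcal{R}(B^{T})$, and $P_M M^{-1}B^{T} = M^{-1}B^{T} - M^{-1}B^{T}S^{-1}(BM^{-1}B^{T}) = 0$ for every SPD $M$, so the first-order condition at $u^{\star}_{\phi}$ is indeed metric-independent. Your route buys a cleaner argument: the paper's $R_{21}$-type term disappears entirely, \eqref{lem_u_diff_eq01}--\eqref{lem_u_diff_eq02} are never invoked, and the $\delta^{\star}$-coefficient drops from $9\kappa_{f,M_{\star}}+4$ to $4$. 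Your handling of $(\widetilde{P}_{\mM_{\star}} - \widetilde{P}_{\mM})u^{\star}_{\phi}$ in \eqref{lem_uxi_eq01} is an inline re-derivation of \eqref{lem_u_diff_eq00} (the paper instead shifts by $u^{\star}$, using $\widetilde{P}_{\mM}u^{\star}=\widetilde{P}_{\mM_{\star}}u^{\star}=u^{\star}$, and cites that estimate); the two are equivalent and give the same $4\sqrt{1+\theta_m}\,\alpha\delta^{\star}\Theta_m$ contribution.

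The one caveat is constant bookkeeping. Because you estimate the $K_S$- and $\delta$-terms against $\|\nabla f(u^{\star}_{\phi})\|_{M_{\star}^{-1}}$ and only afterwards invoke \eqref{lem_u_diff_eq03}, your $\delta$-coefficient acquires a factor $2$: you prove the lemma with $p$ replaced by roughly $4\delta^{\star} + 2(1+2K_S)\delta$, rather than the stated $(9\kappa_{f,M_{\star}}+4)\delta^{\star} + (1+2K_S)\delta$. Since $\delta(t)$ is not assumed to be controlled by $\delta^{\star}$, neither bound dominates the other, so your result is not literally the stated inequality. This is immaterial structurally ($p$ only needs to be linear in $(\delta,\delta^{\star})$ for the downstream theorems, whose thresholds would simply rescale), but to match the statement verbatim you would need either to redefine $p$, or to use the sharper bound $\|\nabla f(u^{\star}_{\phi})\|_{M_{\star}^{-1}} \le (1+3\kappa_{f,M_{\star}}\delta^{\star})\|\nabla f(u^{\star})\|_{M_{\star}^{-1}}$ and absorb the cross term $3\kappa_{f,M_{\star}}(1+2K_S)\delta\,\delta^{\star}$ into the $\delta^{\star}$-budget, which needs an extra smallness assumption on $(1+2K_S)\delta$ that the lemma does not make. (The residual $\sqrt{1+\theta_m}$-vs-$(1+\theta_m)$ slack from converting between the $M(t)$- and $M_{\star}$-norms is shared by the paper's own proof and is not held against you.)
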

\begin{proof}
For simplicity, we drop ``$(t)$''. Note that $u^{\star}_\phi = \widetilde{P}_{\mM_{\star}} (u^{\star}_{\phi} - \alpha M_{\star}^{-1} \nabla f(u^{\star}_\phi)) $. 
Then, we obtain
\begin{equation}
\begin{split}
\label{lem_uxi_eq1}
  u^{\star}_{\phi} - \widetilde{P}_{\mM} ( u^{\star}_{\phi} - \alpha M^{-1} \nabla f(u^{\star}_\phi) ) 
 =  \underbrace{ (\widetilde{P}_{\mM_{\star}} - \widetilde{P}_{\mM})u^{\star}_\phi }_{R_1} -   \alpha \underbrace{ (  \widetilde{P}_{\mM_{\star}} M_{\star}^{-1} -   \widetilde{P}_{\mM} M^{-1} ) \nabla f(u^{\star}_\phi)  }_{R_2}. \\
 \end{split}
\end{equation}
For $R_1$, as $\widetilde{P}_{\mM_{\star}} u^{\star} = \widetilde{P}_{\mM} u^{\star}= u^{\star}$, 
using \eqref{lem_u_diff_eq00} in Lemma \ref{lem_u_diff} and \eqref{lem_pi_est_eq02} in Lemma \ref{lem_spect}, we obtain
\begin{equation*}
\begin{split}
\label{lem_uxi_eq2_new}
\| R_1 \|_{M}  &= \| (\widetilde{P}_{\mM_{\star}} - \widetilde{P}_{\mM})( u^{\star}_\phi - u^{\star} ) \|_{M} \\
& \le 2 \sqrt{1+\theta_m} \Theta_m \| ( I - \widetilde{P}_{\mM_{\star}} )( u^{\star}_\phi - u^{\star} ) \|_{M_{\star}} 
  \le 4 \alpha \delta^{\star} \sqrt{1+\theta_m} \Theta_m  \| \nabla f(u^{\star}) \|_{M^{-1}_{\star}}.
\end{split}
\end{equation*}
For $R_2$, we notice the following decomposition
\begin{equation*}
\begin{split}
R_2  =  \underbrace{  ( \widetilde{P}_{\mM_{\star}} M_{\star}^{-1} -  \widetilde{P}_{\mM}  M^{-1} ) ( \nabla f(u^{\star}_\phi) - \nabla f(u^{\star}))  }_{R_{21}}  
 +  \underbrace{  ( \widetilde{P}_{\mM_{\star}} M_{\star}^{-1} -  \widetilde{P}_{\mM}  M^{-1} )  \nabla f(u^{\star})  }_{R_{22}} .
\end{split}
\end{equation*}
We then need to estimate each term above.
For $R_{21}$, noticing the decomposition 
$$
\widetilde{P}_{\mM_{\star}} M_{\star}^{-1} -  \widetilde{P}_{\mM}  M^{-1} = (\widetilde{P}_{\mM_{\star}} -  \widetilde{P}_{\mM} )  M_{\star}^{-1}  + \widetilde{P}_{\mM}  ( M_{\star}^{-1} - M^{-1} ),
$$ 
and using \eqref{lem_pi_est_eq02} in Lemma \ref{lem_spect}, 
\eqref{lem_Pi_est_eq02} and \eqref{lem_Pi_est_eq011} in Lemma \ref{lem_Pi_est},
the similar argument to \eqref{lem_pi_est_eq5},
we have
\begin{equation*}
\begin{split}
\label{lem_uxi_eq3_2}
\| R_{21} \|_M   \le & \| (\widetilde{P}_{\mM_{\star}} -  \widetilde{P}_{\mM} )  M_{\star}^{-1} ( \nabla f(u^{\star}_\phi) - \nabla f(u^{\star}))    \|_{M }  +  \|  \widetilde{P}_{\mM}  ( M_{\star}^{-1} - M^{-1} ) ( \nabla f(u^{\star}_\phi) - \nabla f(u^{\star}))   \|_{M }  \\
 \le & 3  \sqrt{1+\theta_m} \Theta_m  \|   \nabla f(u^{\star}_\phi) - \nabla f(u^{\star})   \|_{M_{\star}^{-1} } 
 \le 9 \sqrt{1+\theta_m} \kappa_{f,M_{\star}}  \Theta_m \delta^{\star}  \|  \nabla f(u^{\star})  \|_{M_{\star}^{-1} } ,
\end{split}
\end{equation*}
where in the last inequality we have also used \eqref{lem_u_diff_eq01} in Lemma \ref{lem_u_diff} with $\alpha^{\star} \le  L_{f,M_{\star}}^{-1}$.
Furthermore, for $R_{22}$, as $P_M M^{-1} \nabla f(u^{\star}) = 0$ for any SPD $M$, we can write down
\begin{equation}
\begin{split}
\label{lem_uxi_eq3_3}
 R_{22}   = &  (\widetilde{P}_{\mM_{\star}}  - P_{M_{\star}}) M_{\star}^{-1} \nabla f(u^{\star})  - ( \widetilde{P}_{\mM} - P_M ) M^{-1} \nabla f(u^{\star})\\
  = & \underbrace{ \left[ (\widetilde{P}_{\mM_{\star}}  - P_{M_{\star}}) -  (  \widetilde{P}_{\mM} - P_M ) \right] M_{\star}^{-1} \nabla f(u^{\star}) }_{R_{221}}
  - \underbrace{ ( \widetilde{P}_{\mM} - P_M )( M^{-1} - M_{\star}^{-1} ) \nabla f(u^{\star}) }_{R_{222}}.
\end{split}
\end{equation}
Applying \eqref{lem_pi_est_eq03} in Lemma \ref{lem_spect}, we have
\begin{equation*}
\label{lem_uxi_eq3_4}
\| R_{221} \|_M \le 2\sqrt{1+\theta_m} K_S \delta \Theta_m \| \nabla f(u^{\star}) \|_{M^{-1}_{\star}}.
\end{equation*}
In addition, employing \eqref{lem_Pi_est_eq04} in Lemma \ref{lem_SPD_general}, 
\eqref{lem_pi_est_eq01} in Lemma \ref{lem_spect}, and \eqref{lem_pi_est_eq5} yields
\begin{equation*}
\label{lem_uxi_eq3_5}
\| R_{222} \|_M \le \delta \| ( M^{-1} - M_{\star}^{-1} ) \nabla f(u^{\star}) \|_M \le \sqrt{1+\theta_m} \delta \Theta_m  \| \nabla f(u^{\star}) \|_{M^{-1}_{\star}}.
\end{equation*}
Substituting these estimates into \eqref{lem_uxi_eq3_3}, we have the estimate of $R_{22}$.
It then leads to $R_2$ together with the estimate of $R_{21}$ and $\kappa_{f,M_{\star}}\ge1$.
Notice that $R_2$ readily gives \eqref{lem_uxi_eq03}.
\end{proof}

In the next two lemmas, we shall proceed to establish the dynamics for $\mathcal{E}^{(1)}$ and $\mathcal{E}^{(2)}$, respectively.
\begin{lemma}[Dynamics for $\mathcal{E}^{(1)}$]
\label{lemma_error_ineq}
Under the conditions of Lemma \ref{lem_uxi},
there holds
\begin{equation}
\label{lemma_error_ineq_eq0}
\frac{\dd }{\dd t}\mathcal{E}^{(1)}  \le  - \frac{ \mu_{f,M} \kappa^{-1}_{f,M}}{2} \alpha \mathcal{E}^{(1)} 
+ K_1 \alpha^{-1} \mathcal{E}^{(2)} + K_2 \alpha  p^2 \Theta^2_m,
\end{equation}
where $K_1 =  2(2\kappa^{2}_{f,M} + \alpha^2 L^2_{f,M})$, 
$K_2 = 2 (1+\theta_m)  \| \nabla f(u^{\star}) \|^2_{M^{-1}_{\star}} \kappa^2_{f,M}$,
and $p$ is given in \eqref{lem_uxi_eq04}.
\end{lemma}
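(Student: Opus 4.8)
The plan is to differentiate $\mathcal{E}^{(1)}$ along the flow \eqref{PPGD1} and, using the equilibrium identity $u^{\star}_{\phi}=\widetilde{P}_{\mM_{\star}}(u^{\star}_{\phi}-\alpha M^{-1}_{\star}\nabla f(u^{\star}_{\phi}))$, to split the result into exactly three pieces: a genuine descent term, a term measuring deviation from the constraint set, and a source term driven by the metric mismatch. Since $u'=\widetilde{P}_{\mM}(u-\alpha M^{-1}\nabla f(u))-u$, differentiation together with the notation \eqref{xi_notat} gives $\frac{\dd}{\dd t}\mathcal{E}^{(1)}=\langle \zeta_f, u'\rangle$. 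I would rewrite the velocity by centering at the fixed point, adding and subtracting $\widetilde{P}_{\mM}(u^{\star}_{\phi}-\alpha M^{-1}\nabla f(u^{\star}_{\phi}))$, which yields $u'=\widetilde{P}_{\mM}(\zeta-\alpha M^{-1}\zeta_f)-\zeta+\big(\widetilde{P}_{\mM}(u^{\star}_{\phi}-\alpha M^{-1}\nabla f(u^{\star}_{\phi}))-u^{\star}_{\phi}\big)$. Collecting $\widetilde{P}_{\mM}\zeta-\zeta=-(I-\widetilde{P}_{\mM})\zeta$ then gives
\[
\frac{\dd}{\dd t}\mathcal{E}^{(1)} = \underbrace{-\langle \zeta_f, (I-\widetilde{P}_{\mM})\zeta\rangle}_{A}\ \underbrace{-\,\alpha\langle \zeta_f, \widetilde{P}_{\mM}M^{-1}\zeta_f\rangle}_{B}\ +\ \underbrace{\langle \zeta_f, \widetilde{P}_{\mM}(u^{\star}_{\phi}-\alpha M^{-1}\nabla f(u^{\star}_{\phi}))-u^{\star}_{\phi}\rangle}_{C}.
\]

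For the descent term, note $B=-\alpha\langle \nabla f(u)-\nabla f(u^{\star}_{\phi}),\nabla_{\mM}(f(u)-f(u^{\star}_{\phi}))\rangle$, so I would invoke Lemma \ref{lem_convLip3} (applicable since $\widetilde{S}\succcurlyeq S$ is assumed throughout) to obtain $B\le -\alpha\frac{\mu_{f,M}^2}{2}\|\zeta\|_M^2+\alpha L_{f,M}^2\|(I-\widetilde{P}_{\mM})\zeta\|_M^2$. Then $\|\zeta\|_M^2\ge 2L_{f,M}^{-1}D_f(u,u^{\star}_{\phi})$ from \eqref{conv_Lip} converts the leading part into the full descent $-\alpha\mu_{f,M}\kappa_{f,M}^{-1}\mathcal{E}^{(1)}$, while the remainder $\alpha L_{f,M}^2\|(I-\widetilde{P}_{\mM})\zeta\|_M^2$ is routed into the $\mathcal{E}^{(2)}$ bucket and ultimately produces the $\alpha^2 L^2$ contribution to $K_1$.

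The two remaining terms are handled by Cauchy--Schwarz together with $\|\zeta_f\|_{M^{-1}}^2\le 2L_{f,M}\mathcal{E}^{(1)}$ from \eqref{eq: Breg bound}. For $A$, I would bound $\|(I-\widetilde{P}_{\mM})\zeta\|_M$ by $\sqrt{\mathcal{E}^{(2)}}$: writing $(I-\widetilde{P}_{\mM})\zeta=(I-\widetilde{P}_{\mM_{\star}})\zeta+(\widetilde{P}_{\mM_{\star}}-\widetilde{P}_{\mM})\zeta$ and applying \eqref{lem_pi_est_eq02} and the metric equivalence \eqref{lem_pi_est_eq01} of Lemma \ref{lem_spect} gives $\|(I-\widetilde{P}_{\mM})\zeta\|_M\le \sqrt{1+\theta_m}\,(1+2\Theta_m)\sqrt{2\mathcal{E}^{(2)}}$. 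The whole point of the centering is that the residual in $C$ is exactly the quantity estimated in \eqref{lem_uxi_eq01} of Lemma \ref{lem_uxi}, so $\|\widetilde{P}_{\mM}(u^{\star}_{\phi}-\alpha M^{-1}\nabla f(u^{\star}_{\phi}))-u^{\star}_{\phi}\|_M\le \sqrt{1+\theta_m}\,\alpha\, p\,\Theta_m\|\nabla f(u^{\star})\|_{M^{-1}_{\star}}$, which crucially carries the factor $\Theta_m$.

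Finally I would apply Young's inequality to $A$ and $C$, each time peeling off a small multiple (say $\tfrac{1}{8}$) of the descent $\alpha\mu_{f,M}\kappa_{f,M}^{-1}\mathcal{E}^{(1)}$, leaving a net coefficient of at least $\tfrac{1}{2}\alpha\mu_{f,M}\kappa_{f,M}^{-1}$. The conjugate terms collect into $K_1\alpha^{-1}\mathcal{E}^{(2)}$ (the $\kappa^2$ part from $A$ and the $\alpha^2L^2$ part from $B$) and into $K_2\alpha p^2\Theta_m^2$ from $C$, with \eqref{Lmu_M_ineq2} used to express all constants uniformly through the metric $M$. The main obstacle is the bookkeeping of reference points: the descent must be measured from the flow's equilibrium $u^{\star}_{\phi}$ rather than from the true minimizer $u^{\star}$, so that the source self-extinguishes as $u\to u^{\star}_{\phi}$; Lemma \ref{lem_uxi} is precisely what guarantees that every genuinely non-vanishing error carries the factor $\Theta_m$ and therefore collapses into the single source term, and tuning the Young constants so they land exactly on $K_1$ and $K_2$ is the delicate final step.
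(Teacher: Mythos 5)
Your proposal is correct and follows essentially the same route as the paper's own proof: the identical three-term decomposition (your $A$, $B$, $C$ are exactly the paper's $R_1$, $R_2$, $R_3$, obtained from the same centering at $u^{\star}_{\phi}$), Lemma \ref{lem_convLip3} plus \eqref{conv_Lip} for the descent term, Cauchy--Schwarz with \eqref{eq: Breg bound} for $A$, the bound \eqref{lem_uxi_eq01} of Lemma \ref{lem_uxi} for $C$, and Young's inequality to assemble everything. The only substantive difference is constant bookkeeping: the paper peels $\tfrac14$ (not $\tfrac18$) of the descent off each of $R_1$ and $R_3$, leaving exactly $-\tfrac12\alpha\mu_{f,M}\kappa^{-1}_{f,M}\mathcal{E}^{(1)}$, and it silently identifies $\|(I-\widetilde{P}_{\mM})\zeta\|^2_M$ with $2\mathcal{E}^{(2)}$, whereas you carry out that metric transfer explicitly via \eqref{lem_pi_est_eq01}--\eqref{lem_pi_est_eq02}; the resulting factor $(1+\theta_m)(1+2\Theta_m)^2$, together with your smaller Young parameter, means your constants land above the stated $K_1$, $K_2$. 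Arguably your version is the more defensible one, since the paper's identification of $\|(I-\widetilde{P}_{\mM})\zeta\|^2_M$ with $2\mathcal{E}^{(2)}$ across the two metrics genuinely requires a factor of order $(1+\theta_m)/(1-\delta^{\star})$ that its stated constants do not carry; this affects only the explicit values of $K_1$ and $K_2$, not the structure or subsequent use of the inequality.
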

\begin{proof}
To begin with, we notice the following identity:
\begin{equation}
\begin{split}
\label{lemma_error_ineq_eq1}
\frac{\dd }{\dd t} \mathcal{E}^{(1)} = & - \langle \nabla f(u) - \nabla f(u^{\star}_{\phi}), \zeta - \widetilde{P}_{\mM} \zeta \rangle  - \alpha  \langle \nabla f(u) - \nabla f(u^{\star}_{\phi}), \widetilde{P}_{\mM} M^{-1} (\nabla f(u) - \nabla f(u^{\star}_{\phi})) \rangle \\
& -  \langle \nabla f(u) - \nabla f(u^{\star}_{\phi}), u^{\star}_{\phi} - \widetilde{P}_{\mM} ( u^{\star}_{\phi} - \alpha M^{-1} \nabla f(u^{\star}_\phi) ) \rangle : =  R_1 + R_2 + R_3.
\end{split}
\end{equation}
The estimate of $R_1$ follows from a simple Young's inequality:
\begin{equation*}
\begin{split}
\label{lemma_error_ineq_eq2} 
R_1 & \le \| \nabla f(u) - \nabla f(u^{\star}_{\phi}) \|_{M^{-1}} \| \zeta - \widetilde{P}_{\mM} \zeta \|_M 
 \le (2L_{f,M})^{1/2} \mathcal{E}^{1/2}  \| \zeta - \widetilde{P}_{\mM} \zeta \|_M \\
 & \le  \frac{1}{4} \alpha \mu_{f,M} \kappa^{-1}_{f,M} \mathcal{E}^{(1)} + 2 \alpha^{-1}  \kappa^{2}_{f,M} \| \zeta - \widetilde{P}_{\mM} \zeta \|^2_M.
\end{split}
\end{equation*}
As for $R_2$, by Lemma \ref{lem_convLip3} and $\| \zeta \|^2_M \ge 2 L^{-1}_{f,M} \mathcal{E}$ from \eqref{conv_Lip}, we have
\begin{equation*}
\label{lemma_error_ineq_eq3}
R_2  \le   -\alpha \left( \mu^2_{f,M} /2  \| \zeta \|^2_M - L^2_{f,M} \| \widetilde{P}_{\mM}\zeta - \zeta \|^2_M \right) 
\le  -\alpha \mu_{f,M} \kappa^{-1}_{f,M} \mathcal{E}^{(1)} + \alpha L^2_{f,M}  \| \zeta - \widetilde{P}_{\mM} \zeta \|^2_M.
\end{equation*}
At last, for $R_3$, by \eqref{lem_uxi_eq01} in Lemma \ref{lem_uxi} with Young's inequality, we have
\begin{equation*}
\begin{split}
\label{lemma_error_ineq_eq4}
R_3  & \le (2L_{f,M})^{1/2} (\mathcal{E}^{(1)})^{1/2}  \left( \sqrt{1+\theta_m} p  \alpha \| \nabla f(u^{\star}) \|_{M^{-1}_{\star}} \Theta_m \right) \\
& \le \frac{1}{4} \alpha \mu_{f,M}  \kappa_{f,M}^{-1} \mathcal{E}^{(1)} 
+ 2 (1+\theta_m)  \| \nabla f(u^{\star}) \|^2_{M^{-1}_{\star}} \kappa^2_{f,M} \alpha  p^2 \Theta^2_m.
\end{split}
\end{equation*}
Combining these estimates into \eqref{lemma_error_ineq_eq1} yields \eqref{lemma_error_ineq_eq0}.
\end{proof}

\begin{lemma}[Dynamics for $\mathcal{E}^{(2)}$]
\label{lem_est_uPiu}
Under the conditions of Lemma \ref{lem_uxi} and the extra assumption $\delta_{\max} = \sup_{t\ge 0}\{\delta(t)\} \le (8\theta_m+9)^{-1}$, 
there holds for any $\epsilon\ge 0$
\begin{equation}
\begin{split}
\label{lem_est_uPiu_eq02}
\frac{\dd}{\dd t}\mathcal{E}^{(2)}
& \le - \left( \frac{7}{4} - \epsilon \right) \mathcal{E}^{(2)}
+ K_3 \epsilon^{-1}  (\alpha \delta)^2  \mathcal{E}^{(1)}
+ K_4  \left(16(\delta^{\star})^2 +  p^2 \right) \alpha^2 \Theta^2_m ,
\end{split}
\end{equation}
where $K_3=4 (1+\theta_m)^2 L_{f,M}$, $K_{4}=\frac{3}{2}(1+\theta_m) \| \nabla f(u^{\star}) \|^2_{M^{-1}_{\star}}$,
and $p$ is given by \eqref{lem_uxi_eq04}.
\end{lemma}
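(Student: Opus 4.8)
The plan is to differentiate $\mathcal{E}^{(2)}$ along the flow \eqref{PPGD1}, insert the fixed-point identity for $u^{\star}_{\phi}$, and then exploit an exact-projection cancellation to isolate a clean $-2\mathcal{E}^{(2)}$ dissipation whose degradation is governed only by the inexactness levels $\delta,\delta^{\star}$, not by the metric variation $\Theta_m$. Writing $\zeta = u - u^{\star}_{\phi}$, $\zeta_f = \nabla f(u) - \nabla f(u^{\star}_{\phi})$ and $w := (I - \widetilde{P}_{\mM_{\star}})\zeta$ (so $\mathcal{E}^{(2)} = \tfrac12\|w\|^2_{M_{\star}}$), I note that $M_{\star}$, $\widetilde{P}_{\mM_{\star}}$ and $u^{\star}_{\phi}$ are all $t$-independent, whence $\frac{\dd}{\dd t}\mathcal{E}^{(2)} = \langle w, (I - \widetilde{P}_{\mM_{\star}})u'\rangle_{M_{\star}}$. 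Using \eqref{PPGD1} together with $u^{\star}_{\phi} = \widetilde{P}_{\mM_{\star}}(u^{\star}_{\phi} - \alpha M^{-1}_{\star}\nabla f(u^{\star}_{\phi}))$, I would decompose
\[
u' = -(I - \widetilde{P}_{\mM})\zeta - \alpha\widetilde{P}_{\mM}M^{-1}\zeta_f - R, \qquad R := u^{\star}_{\phi} - \widetilde{P}_{\mM}(u^{\star}_{\phi} - \alpha M^{-1}\nabla f(u^{\star}_{\phi})),
\]
so that $\frac{\dd}{\dd t}\mathcal{E}^{(2)} = T_1 + T_2 + T_3$ with $T_1 = -\langle w, (I - \widetilde{P}_{\mM_{\star}})(I - \widetilde{P}_{\mM})\zeta\rangle_{M_{\star}}$, $T_2 = -\alpha\langle w,(I - \widetilde{P}_{\mM_{\star}})\widetilde{P}_{\mM}M^{-1}\zeta_f\rangle_{M_{\star}}$, and $T_3 = -\langle w,(I - \widetilde{P}_{\mM_{\star}})R\rangle_{M_{\star}}$; here $R$ is exactly the equilibrium residual controlled by \eqref{lem_uxi_eq01} in Lemma \ref{lem_uxi}.

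The crux is $T_1$. I would split the inexact projection into its exact part plus the inexactness correction, $I - \widetilde{P}_{\mM} = (I - P_M) + (P_M - \widetilde{P}_{\mM})$, and use the defining property of the exact projection $BP_M = 0$ (from \eqref{soblev_grad_4}): since $I - \widetilde{P}_{\mM_{\star}} = M^{-1}_{\star}B^{T}\widetilde{S}^{-1}_{\star}B$ acts on $\zeta$ only through $B$ and $B(I - P_M) = B$, one gets the cancellation $(I - \widetilde{P}_{\mM_{\star}})(I - P_M) = I - \widetilde{P}_{\mM_{\star}}$. Consequently $T_1 = -\|w\|^2_{M_{\star}} - \langle w,(I - \widetilde{P}_{\mM_{\star}})(P_M - \widetilde{P}_{\mM})\zeta\rangle_{M_{\star}} = -2\mathcal{E}^{(2)} - \langle w,(I - \widetilde{P}_{\mM_{\star}})(P_M - \widetilde{P}_{\mM})\zeta\rangle_{M_{\star}}$, and the metric variation $\Theta_m$ has dropped out of the leading term entirely. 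Because $(I - \widetilde{P}_{\mM_{\star}})$ is an $M_{\star}$-contraction and $\|(P_M - \widetilde{P}_{\mM})\zeta\|_{M_{\star}}\lesssim (1+\theta_m)\delta\,\|w\|_{M_{\star}}$ — which I would obtain by expressing both sides through $B\zeta$ and using \eqref{proj_Pi_eq1}, the inexactness bound \eqref{lem_Pi_est_eq04}, Lemma \ref{lem_SPD_general} in the form $(\widetilde{S}^{-1}-S^{-1})S(\widetilde{S}^{-1}-S^{-1})\preccurlyeq\delta^2 S^{-1}$, and the spectral equivalences \eqref{Lmu_M_ineq2} to pass between $M$- and $M_{\star}$-norms — the correction is bounded by a multiple of $(1+\theta_m)\delta\,\mathcal{E}^{(2)}$, i.e. it only shaves a $\delta$-proportional amount off the decay rate.

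For $T_2$ the same cancellation gives $(I - \widetilde{P}_{\mM_{\star}})P_M = 0$, hence $(I - \widetilde{P}_{\mM_{\star}})\widetilde{P}_{\mM} = -(I - \widetilde{P}_{\mM_{\star}})(P_M - \widetilde{P}_{\mM})$, again $O(\delta)$; a Young split with parameter $\epsilon$ then yields $\lesssim \epsilon\mathcal{E}^{(2)} + K_3\epsilon^{-1}(\alpha\delta)^2\mathcal{E}^{(1)}$ after bounding $\|\zeta_f\|_{M^{-1}}$ through \eqref{eq: Breg bound} and Lemma \ref{lem_u_diff}. For $T_3$ I would Cauchy--Schwarz and use the finer decomposition of $R$ from the proof of Lemma \ref{lem_uxi}, namely $R = (\widetilde{P}_{\mM_{\star}} - \widetilde{P}_{\mM})(u^{\star}_{\phi} - u^{\star}) - \alpha(\cdots)$, whose two pieces are bounded respectively by $4\alpha\delta^{\star}\sqrt{1+\theta_m}\,\Theta_m\|\nabla f(u^{\star})\|_{M^{-1}_{\star}}$ (via \eqref{lem_u_diff_eq00}) and by $\sqrt{1+\theta_m}\,\alpha p\,\Theta_m\|\nabla f(u^{\star})\|_{M^{-1}_{\star}}$ (via \eqref{lem_uxi_eq03}); a Young split then produces a small multiple of $\mathcal{E}^{(2)}$ plus the forcing $K_4(16(\delta^{\star})^2 + p^2)\alpha^2\Theta_m^2$, which is where both $\Theta_m$ and the constant $16$ enter.

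Finally I would collect the three $\mathcal{E}^{(2)}$-coefficients: $-2$ from the leading term, the $\delta$- and $\delta^{\star}$-proportional degradations from the corrections in $T_1$ and $T_3$, and the explicit $+\epsilon$ from $T_2$. Invoking the standing smallness $\delta_{\max}\le(8\theta_m+9)^{-1}$ together with $\delta^{\star}\le\delta_{\max}$ ensures the aggregate degradation does not exceed $\tfrac14$, so the net coefficient is at most $-(7/4-\epsilon)$, giving \eqref{lem_est_uPiu_eq02}. The hard part is precisely preventing the metric variation from entering the decay coefficient: a naive Cauchy--Schwarz on $(\widetilde{P}_{\mM}-\widetilde{P}_{\mM_{\star}})\zeta$ through \eqref{lem_pi_est_eq02} would leave an irreducible $\Theta_m\mathcal{E}^{(2)}$ term that cannot be absorbed for general $\theta_m$; the exact-projection cancellation $BP_M = 0$ is what confines $\Theta_m$ to the quadratic forcing and lets the $\delta$-budget alone control the rate.
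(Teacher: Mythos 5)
Your proposal is correct and follows essentially the same route as the paper: the identical differentiation of $\mathcal{E}^{(2)}$ along the flow, the identical decomposition of $u'$ via the fixed-point identity for $u^{\star}_{\phi}$ (your $R$ is just the paper's two residual terms grouped together), and the identical treatment of the residual via \eqref{lem_u_diff_eq00}, \eqref{lem_pi_est_eq02} and \eqref{lem_uxi_eq03} with Young splits, yielding the same $-2\mathcal{E}^{(2)}+\tfrac14\mathcal{E}^{(2)}+\epsilon\mathcal{E}^{(2)}$ budget under $\delta_{\max}\le(8\theta_m+9)^{-1}$. The only cosmetic difference is that where the paper invokes the ready-made estimate \eqref{lem_Dproj_new_eq01} of Lemma \ref{lem_Pi_est} to bound the $(I-\widetilde{P}_{\mM_{\star}})\widetilde{P}_{\mM}$ terms by $O(\delta)\,\mathcal{E}^{(2)}$, you re-derive that same bound inline through the cancellation $(I-\widetilde{P}_{\mM_{\star}})P_{M}=0$ together with Lemma \ref{lem_SPD_general} and the metric equivalences of \textbf{(H1)} --- which is precisely the mechanism used in the paper's Appendix \ref{append_lem_Pi_est} proof of \eqref{lem_Dproj_new_eq01}, so the mathematics coincides.
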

\begin{proof}
Using the notation in \eqref{xi_notat}, we can write down
\begin{equation*}
\label{lem_est_uPiu_eq1}
\frac{\dd}{\dd t}\zeta  = -(I - \widetilde{P}_{\mM})\zeta  + (\widetilde{P}_{\mM}-\widetilde{P}_{\mM_{\star}})u^{\star}_{\phi} 
+  \alpha\widetilde{P}_{\mM}M^{-1}\zeta_f + \alpha (\widetilde{P}_{\mM}M^{-1} - \widetilde{P}_{\mM_{\star}}M^{-1}_{\star}) \nabla f(u^{\star}_{\phi}) .
\end{equation*}
We then get
\begin{equation}
\begin{split}
\label{lem_est_uPiu_eq2}
\frac{\dd}{\dd t}\mathcal{E}_2  = & \langle (I - \widetilde{P}_{\mM_{\star}})\zeta, (I - \widetilde{P}_{\mM_{\star}})\zeta'  \rangle_{M_{\star}} \\
 = & -2 \mathcal{E}^{(2)} + \langle (I - \widetilde{P}_{\mM_{\star}})\zeta,  (I - \widetilde{P}_{\mM_{\star}}) \widetilde{P}_{\mM}\zeta \rangle_{M_{\star}}
  +  \langle (I - \widetilde{P}_{\mM_{\star}})\zeta, (\widetilde{P}_{\mM}-\widetilde{P}_{\mM_{\star}})u^{\star}_{\phi}   \rangle_{M_{\star}}  \\
 + &\alpha \langle (I - \widetilde{P}_{\mM_{\star}})\zeta,  (I - \widetilde{P}_{\mM_{\star}})\widetilde{P}_{\mM} M^{-1} \zeta_f   \rangle_{M_{\star}} \\
 + & \alpha  \langle (I - \widetilde{P}_{\mM_{\star}})\zeta,  (I - \widetilde{P}_{\mM_{\star}}) (\widetilde{P}_{\mM}M^{-1} - \widetilde{P}_{\mM_{\star}}M^{-1}_{\star}) \nabla f(u^{\star}_{\phi})  \rangle_{M_{\star}}.  
\end{split}
\end{equation}
We denote $R_1$-$R_4$ by the second to fifth terms above and proceed to estimate each one.
First, by \eqref{lem_Dproj_new_eq01} in Lemma \ref{lem_Pi_est}, we have
\begin{equation*}
\begin{split}
\label{lem_est_uPiu_eq3}
& R_1 \le \frac{ 2(1+\theta_m)\delta}{1-\delta^{\star}}   \mathcal{E}^{(2)} \le \frac{1}{4} \mathcal{E}^{(2)}
~~~\text{and}~~~\\
& R_3 \le  \frac{ (1+\theta_m)\alpha\delta}{1-\delta} \sqrt{2\mathcal{E}^{(2)}} \| \zeta_{f} \|_{M_{\star}}  
\le \frac{\epsilon}{3} \mathcal{E}^{(2)} + 4\epsilon^{-1} (1+\theta_m)^2(\alpha \delta)^2 L_{f,M_{\star}} \mathcal{E}^{(1)},
\end{split}
\end{equation*}
where the first one follows from $\delta \le \frac{1}{8\theta_m +9}$ 
and the second follows from $\delta \le \frac{1}{9} \le \frac{1}{8\theta_m +9}$.
It yields the term associated with $K_3$.
As for $R_2$, by \eqref{lem_pi_est_eq02} in Lemma \ref{lem_spect}, $\widetilde{P}_{\mM}u^{\star} = u^{\star}$, and \eqref{lem_u_diff_eq00} in Lemma \ref{lem_u_diff}, we have
\begin{equation*}
\begin{split}
& \| (\widetilde{P}_{\mM} - \widetilde{P}_{\mM_{\star}} ) u^{\star}_{\phi} \|_{M_{\star}} 
 \le 2\sqrt{1+\theta_m} \Theta_m  \| (I - \widetilde{P}_{\mM_{\star}} ) u^{\star}_{\phi} \|_{M_{\star}} \\
 = &2\sqrt{1+\theta_m} \Theta_m  \| (I - \widetilde{P}_{\mM})(u^{\star}_{\phi} - u^{\star}) \|_{M(t)}  
\le 4\sqrt{1+\theta_m}\alpha \delta^{\star} \Theta_m  \| \nabla f(u^{\star}) \|_{M_{\star}^{-1}} .
\end{split}
\end{equation*}
Then, with Young's inequality, we conclude
\begin{equation*}
\label{lem_est_uPiu_eq5}
R_2 \le 4\sqrt{1+\theta_m}\alpha \delta^{\star} \sqrt{2\mathcal{E}^{(2)}} \Theta_m  \| \nabla f(u^{\star}) \|_{M_{\star}^{-1}}  
\le \frac{\epsilon}{3} \mathcal{E}^{(2)} + 24(1+\theta_m)(\alpha \delta^{\star})^2  \| \nabla f(u^{\star}) \|^2_{M_{\star}^{-1}} \Theta^2_m.
\end{equation*}
Next, using \eqref{lem_uxi_eq03} in Lemma \ref{lem_uxi}, we achieve
\begin{equation*}
\label{lem_est_uPiu_eq6}
R_4 \le \sqrt{1+\theta_m} \alpha  p  \sqrt{2\mathcal{E}^{(2)}} \Theta_m \| \nabla f(u^{\star}) \|_{M_{\star}^{-1}}
\le  \frac{\epsilon}{3} \mathcal{E}^{(2)} + \frac{3}{2}(1+\theta_m)(\alpha p)^2  \| \nabla f(u^{\star}) \|^2_{M_{\star}^{-1}} \Theta^2_m.
\end{equation*}
Combining these estimates into \eqref{lem_est_uPiu_eq2} yields \eqref{lem_est_uPiu_eq02}.
\end{proof}

\begin{theorem}[\RGI{Strong Lyapunov property}]
\label{thm_str_cont_lya}
Under Assumption \ref{asump_M},
assume $\alpha \le L_{f,M_{\star}}^{-1}$ and $\delta(t)$ is sufficiently small such that $\forall t \ge 0$
\begin{equation}
\begin{split}
\label{thm_str_cont_lya_eq01}
& \delta \le \min\Big\{ \frac{\sqrt{\lambda}}{4\sqrt{2}(1+\theta_m)\kappa_{f,M} }, ~~ \frac{1}{8\theta_m+9} \Big\}, \\
& \delta^{\star} \le \min\Big\{ \frac{\sqrt{\lambda}}{ 8 \sqrt{6} K_{\theta} (1+\theta_m) \kappa_{f,M}^{1/2} C^{\star} }, ~~ \frac{1}{4 \kappa_{f,M_{\star}}}  \Big\}, \\
& p \le \frac{\min\Big\{ \sqrt{6\lambda}, ~~ \sqrt{2} \kappa_{f,M}^{-1} \Big\}}{8(1+\theta_m) K_{\theta} \kappa_{f,M}^{1/2} C^{\star}},
\end{split}
\end{equation}
\RGI{
where $p = p(\delta_{\max},\delta^{\star};\kappa_{f,M_{\star}},K_S)= (9\kappa_{f,M_{\star}}+4)\delta^{\star} 
   + (1 + 2K_S) \delta_{\max}$ is given in \eqref{lem_uxi_eq04},
   $C^{\star} = \mu^{1/2}_{f,M_{\star}} \| \nabla f(u^{\star}) \|_{M_{\star}^{-1}}$,
   and $\delta^{\star}$, $\delta_{\max}$, $\theta_m$, and $K_S$ are all defined in Assumption \ref{asump_M}.
We further take $\lambda$ to be sufficiently small 
such that $\lambda \le (4K_1)^{-1}$ 
with $K_1$ given in Lemma \ref{lemma_error_ineq}.}
Then, there holds
\begin{equation}
\begin{split}
\label{thm_str_cont_lya_eq0}
\frac{\dd}{\dd t}\mathcal{E}  \le   - \omega  \mathcal{E}~~~~ \text{with} ~~ 
\omega = \min\Big\{ \frac{  \mu_{f,M} \kappa^{-1}_{f,M}\alpha}{8} , \frac{3}{2} \Big\}.
\end{split}
\end{equation}
\end{theorem}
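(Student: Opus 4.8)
The plan is to combine the two dynamical estimates already established. Weighting Lemma~\ref{lemma_error_ineq} by $\lambda\alpha$ and adding Lemma~\ref{lem_est_uPiu} (whose free parameter $\epsilon\ge 0$ I keep for now), and recalling $\mathcal{E}=\lambda\alpha\mathcal{E}^{(1)}+\mathcal{E}^{(2)}$, I obtain an inequality of the form
\[
\frac{\dd}{\dd t}\mathcal{E}\le A\,\mathcal{E}^{(1)}+B\,\mathcal{E}^{(2)}+F,
\]
with $A=-\tfrac12\lambda\alpha^2\mu_{f,M}\kappa_{f,M}^{-1}+K_3\epsilon^{-1}(\alpha\delta)^2$, $B=\lambda K_1-(\tfrac74-\epsilon)$, and the inhomogeneous forcing $F=\big[\lambda K_2 p^2+K_4(16(\delta^\star)^2+p^2)\big]\alpha^2\Theta_m^2$. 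To reach $\tfrac{\dd}{\dd t}\mathcal{E}\le-\omega\mathcal{E}$ it then suffices to redistribute $F$ and verify two scalar inequalities: that the effective coefficient of $\mathcal{E}^{(1)}$ is $\le-\omega\lambda\alpha$ and that $B\le-\omega$.

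The decisive step is the treatment of $F$, which is not an a~priori constant and would otherwise leave a nonvanishing floor preventing exponential decay. This is exactly where the design of Assumption~\hyperref[asump_M_h3]{\textbf{(H4)}} and of the Lyapunov function pays off: \eqref{Thetat_bound} converts the metric-deviation factor into a state quantity, $\Theta_m^2\le\mu_{f,M_\star}K_\theta^2\|\zeta\|_{M_\star}^2$ with $\zeta=u-u^\star_\phi$, while strong convexity \eqref{conv_Lip} gives $\mathcal{E}^{(1)}\ge\tfrac12\mu_{f,M}\|\zeta\|_M^2$ and the metric equivalence from \hyperref[asump_M_h1]{\textbf{(H1)}} (i.e.\ $\|\zeta\|_{M_\star}^2\le(1+\theta_m)\|\zeta\|_M^2$) turns $F$ into a multiple of $\mathcal{E}^{(1)}$, namely $F\lesssim(1+\theta_m)^2 K_\theta^2\big[\lambda K_2 p^2+K_4(16(\delta^\star)^2+p^2)\big]\alpha^2\,\mathcal{E}^{(1)}$. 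Thus the whole forcing can be absorbed into the large dissipative term $-\tfrac12\lambda\alpha^2\mu_{f,M}\kappa_{f,M}^{-1}\mathcal{E}^{(1)}$ provided the bracket is small, which is precisely what the bounds on $p$ and $\delta^\star$ in \eqref{thm_str_cont_lya_eq01} supply.

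What remains is a careful but elementary bookkeeping of constants, organized by retaining a fixed fraction (say one half) of the dissipation to generate the rate $\omega$ through its first branch $\tfrac18\mu_{f,M}\kappa_{f,M}^{-1}\alpha$ — here $\alpha\le L_{f,M_\star}^{-1}$ and the unified constants \eqref{Lmu_M_ineq2} are used to express everything through $M_\star$ — and spending the remaining fraction on the two positive $\mathcal{E}^{(1)}$-contributions. These are the cross term $K_3\epsilon^{-1}(\alpha\delta)^2\mathcal{E}^{(1)}$ inherited from $\tfrac{\dd}{\dd t}\mathcal{E}^{(2)}$, absorbed by the bound on $\delta$, and the absorbed forcing, absorbed by the bounds on $p$ and $\delta^\star$; the two branches of the $p$-bound serve distinct roles, the $\sqrt{6\lambda}$ branch matching the $\lambda$-free part $K_4 p^2$ against the $\lambda$-scaled budget (hence the $\sqrt{\lambda}$) and the $\sqrt2\kappa_{f,M}^{-1}$ branch handling the $\lambda$-proportional part $\lambda K_2 p^2$, with the same $\sqrt\lambda$ mechanism dictating the bound on $\delta^\star$. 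For the second inequality, the only positive contribution to $B$ besides $\epsilon$ is the feedback $\lambda K_1$ from $\tfrac{\dd}{\dd t}\mathcal{E}^{(1)}$; choosing $\epsilon$ small and using $\lambda\le(4K_1)^{-1}$ keeps $\tfrac74-\epsilon-\lambda K_1\ge\tfrac32$, so that $B=-(\tfrac74-\epsilon-\lambda K_1)\le-\tfrac32\le-\omega$. The constraint $\delta\le(8\theta_m+9)^{-1}$ in \eqref{thm_str_cont_lya_eq01} is the hypothesis needed to invoke Lemma~\ref{lem_est_uPiu} in the first place.

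I expect the main obstacle to be the forcing term $F\propto\alpha^2\Theta_m^2$: unlike in the exact or non-variable setting, it does not vanish and cannot be treated as a lower-order remainder, so the entire argument hinges on using \hyperref[asump_M_h3]{\textbf{(H4)}} and strong convexity to recast it as a dissipative contribution proportional to $\mathcal{E}^{(1)}$. A secondary difficulty is the bidirectional coupling between $\mathcal{E}^{(1)}$ and $\mathcal{E}^{(2)}$ (the $K_1\mathcal{E}^{(2)}$ term feeding one way and the $K_3\mathcal{E}^{(1)}$ term the other); the weighting by $\lambda\alpha$ together with the smallness of $\lambda$ is exactly what decouples them and closes the estimate.
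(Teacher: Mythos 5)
Your proposal is correct and follows essentially the same route as the paper's own proof: it combines Lemma~\ref{lemma_error_ineq} (weighted by $\lambda\alpha$) with Lemma~\ref{lem_est_uPiu}, uses \textbf{(H4)} together with strong convexity and the metric equivalence of \textbf{(H1)} to convert the $\Theta_m^2$-forcing into a multiple of $\mathcal{E}^{(1)}$ that is absorbed into the dissipation via the smallness conditions on $\delta$, $\delta^{\star}$, $p$, and controls the $\mathcal{E}^{(2)}$-coefficient by a small $\epsilon$ (the paper takes $\epsilon=1/4$) together with $\lambda\le(4K_1)^{-1}$. Your reading of the distinct roles of the two branches of the $p$-bound and of the absorption of the forcing $F$ matches the paper's argument exactly, leading to the same rate $\omega=\min\{\mu_{f,M}\kappa^{-1}_{f,M}\alpha/8,\;3/2\}$.
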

\begin{proof}
Employing Lemmas \ref{lemma_error_ineq} and \ref{lem_est_uPiu}, we have
\begin{equation*}
\begin{split}
\label{thm_str_cont_lya_eq1}
\frac{\dd}{\dd t}\mathcal{E} 
& \le - \left(\frac{ \lambda \mu_{f,M} \kappa^{-1}_{f,M}}{2} - K_3 \epsilon^{-1} \delta^2 - \lambda K_2 K_{\theta}^2 p^2  - K_4 K_{\theta}^2  \left(24(\delta^{\star})^2 +  3p^2/2 \right)  \right) \alpha^2 \mathcal{E}^{(1)} \\
& ~~~~~ - \left( \frac{7}{4} - \epsilon - \lambda K_1 \right) \mathcal{E}^{(2)} .
\end{split}
\end{equation*}
Take $\epsilon = 1/4$.
By the assumptions in \eqref{thm_str_cont_lya_eq01},
we have $4 K_3 \delta^2, ~ \lambda K_2 K_{\theta}^2 p^2, ~ K_4 K_{\theta}^2  \left(24(\delta^{\star})^2 +  3p^2/2 \right) \le \lambda \mu_{f,M} \kappa^{-1}_{f,M}/8$,
which together yield
\begin{equation*}
  \frac{\dd}{\dd t}\mathcal{E} \le - \frac{ \lambda \mu_{f,M} \kappa^{-1}_{f,M}}{8} \alpha^2 \mathcal{E}^{(1)} - \frac{3}{2} \mathcal{E}^{(2)}
\le - \min\Big\{ \frac{  \mu_{f,M} \kappa^{-1}_{f,M}\alpha}{8} , \frac{3}{2} \Big\} \mathcal{E},
\end{equation*}
where we have also used the assumption $\lambda \le (4K_1)^{-1}$.
\end{proof}


\begin{theorem}[Exponential convergence]
\label{thm_str_cont_lya2}
Under the assumptions of Theorem \ref{thm_str_cont_lya}, there holds 
\begin{subequations}
\begin{align}
\mathcal{E}  \le e^{-\int^t_0\omega\dd s} \left( \lambda \alpha \mathcal{E}^{(1)}(0) +   \mathcal{E}^{(2)}(0) \right), 
\label{thm_str_cont_lya2_eq01}
\end{align}
where \RGI{$\omega = \min\Big\{ \frac{  \mu_{f,M} \kappa^{-1}_{f,M}\alpha}{8} , \frac{3}{2} \Big\}$ is given in \eqref{thm_str_cont_lya_eq0}}.
If $u_0 \in \ker(B)$, there further holds 
\begin{align}
\label{thm_str_cont_lya2_eq02}
\mathcal{E}  \le e^{-\int^t_0\omega\dd s}  \left( \lambda \alpha \mathcal{E}^{(1)}(0) 
+ 4 (\alpha\delta^{\star})^2  \| \nabla f (u^{\star}) \|^2_{M^{-1}_{\star}} 
\right). 
\end{align}
\end{subequations}
\end{theorem}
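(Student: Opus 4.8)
The plan is to derive both inequalities as elementary corollaries of the strong Lyapunov property $\frac{\dd}{\dd t}\mathcal{E} \le -\omega\mathcal{E}$ established in Theorem \ref{thm_str_cont_lya}. For \eqref{thm_str_cont_lya2_eq01} I would first integrate this scalar differential inequality by Gr\"onwall's lemma to obtain $\mathcal{E}(t) \le e^{-\int_0^t\omega\dd s}\,\mathcal{E}(0)$. Both components are nonnegative ($\mathcal{E}^{(1)} = D_f(\cdot,u^{\star})\ge 0$ by convexity and $\mathcal{E}^{(2)}\ge 0$ as a squared $M_{\star}$-norm), so $\mathcal{E} = \lambda\alpha\mathcal{E}^{(1)} + \mathcal{E}^{(2)}$ satisfies $\lambda\alpha\,\mathcal{E}^{(1)}(t)\le\mathcal{E}(t)$. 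Combining this lower bound with $\mathcal{E}(0) = \lambda\alpha\mathcal{E}^{(1)}(0)+\mathcal{E}^{(2)}(0)$ and dividing the Gr\"onwall estimate by $\lambda\alpha$ yields $\mathcal{E}^{(1)}(t) \le e^{-\int_0^t\omega\dd s}\big(\mathcal{E}^{(1)}(0) + (\lambda\alpha)^{-1}\mathcal{E}^{(2)}(0)\big)$, which is \eqref{thm_str_cont_lya2_eq01}. No smallness conditions beyond those of Theorem \ref{thm_str_cont_lya} enter here; it is pure algebraic rearrangement of the single decay estimate.

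For the feasible-start refinement \eqref{thm_str_cont_lya2_eq02} the extra ingredient is an explicit estimate of $\mathcal{E}^{(2)}(0)$ when $u_0\in\ker(B)$. The key observation is that $\widetilde{P}_{\mM_{\star}}$ fixes $\ker(B)$: since $\widetilde{P}_{\mM_{\star}} = I - M_{\star}^{-1}B^T\widetilde{S}_{\star}^{-1}B$, any $w$ with $Bw=0$ satisfies $\widetilde{P}_{\mM_{\star}}w = w$. Applying this to both $u_0$ and the true minimizer $u^{\star}$ (which lies in $\ker(B)$), I would rewrite
\[
(I-\widetilde{P}_{\mM_{\star}})(u_0 - u^{\star}_{\phi}) = -(I-\widetilde{P}_{\mM_{\star}})u^{\star}_{\phi} = (I-\widetilde{P}_{\mM_{\star}})(u^{\star} - u^{\star}_{\phi}),
\]
so that $\mathcal{E}^{(2)}(0) = \tfrac12\|(I-\widetilde{P}_{\mM_{\star}})(u^{\star} - u^{\star}_{\phi})\|_{M_{\star}}^2$. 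This is exactly the quantity controlled by \eqref{lem_u_diff_eq00} in Lemma \ref{lem_u_diff} (with $\alpha^{\star}=\alpha$), giving $\mathcal{E}^{(2)}(0) \le 2\alpha^2(\delta^{\star})^2\|\nabla f(u^{\star})\|_{M_{\star}^{-1}}^2$.

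Substituting into \eqref{thm_str_cont_lya2_eq01}, it then remains to bound $(\lambda\alpha)^{-1}\mathcal{E}^{(2)}(0) \le \tfrac{2\alpha(\delta^{\star})^2}{\lambda}\|\nabla f(u^{\star})\|_{M_{\star}^{-1}}^2$ by the explicit constant. For this I would invoke the smallness hypotheses of Theorem \ref{thm_str_cont_lya}: since $p \ge (9\kappa_{f,M_{\star}}+4)\delta^{\star}$ and $p$ is bounded through $\sqrt{6\lambda}\big/\big(8(1+\theta_m)K_{\theta}\kappa_{f,M}^{1/2}C^{\star}\big)$ with $C^{\star} = \mu_{f,M_{\star}}^{1/2}\|\nabla f(u^{\star})\|_{M_{\star}^{-1}}$, squaring gives $(\delta^{\star})^2/\lambda \lesssim 1\big/\big((9\kappa_{f,M_{\star}}+4)^2 K_{\theta}^2 (C^{\star})^2\big)$. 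The crucial cancellation is that $(C^{\star})^2$ carries precisely the factor $\|\nabla f(u^{\star})\|_{M_{\star}^{-1}}^2$, which annihilates the same factor in $\mathcal{E}^{(2)}(0)$ and leaves a bound depending only on $\alpha$, $\kappa_{f,M_{\star}}$, $\mu_{f,M_{\star}}$ and $K_{\theta}$; collecting the numerical constants produces the stated expression.

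I expect the only delicate part to be this last bookkeeping for \eqref{thm_str_cont_lya2_eq02}: one must carry the smallness bounds on $\delta^{\star}$ (routed through $p$) and the definition of $C^{\star}$ with enough precision that the $\|\nabla f(u^{\star})\|_{M_{\star}^{-1}}$-dependence cancels cleanly and the residual powers of $\mu_{f,M_{\star}}$ and $\kappa_{f,M_{\star}}$ match the claimed constant, using \eqref{Lmu_M_ineq2} to convert between $\kappa_{f,M}$ and $\kappa_{f,M_{\star}}$. Everything else---the Gr\"onwall integration, the nonnegativity of the two Lyapunov components, and the $\ker(B)$-invariance of $\widetilde{P}_{\mM_{\star}}$---is routine.
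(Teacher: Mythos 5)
Your proposal is correct and follows essentially the same route as the paper's proof: Gr\"onwall applied to the strong Lyapunov property of Theorem \ref{thm_str_cont_lya} gives \eqref{thm_str_cont_lya2_eq01}, and for \eqref{thm_str_cont_lya2_eq02} the paper likewise uses the kernel-invariance identity $(I-\widetilde{P}_{\mM_{\star}})(u_0-u^{\star}_{\phi})=(I-\widetilde{P}_{\mM_{\star}})(u^{\star}-u^{\star}_{\phi})$, the estimate \eqref{lem_u_diff_eq00}, the smallness bound on $p$ from \eqref{thm_str_cont_lya_eq01} (which controls $\delta^{\star}$ since $p\ge(9\kappa_{f,M_{\star}}+4)\delta^{\star}$), and the conversion $\kappa_{f,M}\ge(1+\theta_m)^{-2}\kappa_{f,M_{\star}}$. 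The bookkeeping you flag as delicate is indeed the only sore point, but the issue lies in the paper rather than in your argument: with the stated $C^{\star}=\mu_{f,M_{\star}}^{1/2}\|\nabla f(u^{\star})\|_{M_{\star}^{-1}}$ the cancellation of $\|\nabla f(u^{\star})\|^2_{M_{\star}^{-1}}$ leaves $\mu_{f,M_{\star}}$ in the denominator rather than the numerator of the final constant, which matches \eqref{thm_str_cont_lya2_eq02} only if $C^{\star}$ is read as $\mu_{f,M_{\star}}^{-1/2}\|\nabla f(u^{\star})\|_{M_{\star}^{-1}}$ (the dimensionally consistent choice).
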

\begin{proof}
Notice that \eqref{thm_str_cont_lya2_eq01} is trivial from the strong Lyapunov property in Theorem \ref{thm_str_cont_lya}.
In addition, \eqref{thm_str_cont_lya2_eq02} follows from Lemma \ref{lem_u_diff} and $(I - \widetilde{P}_{\mM_{\star}})(u_0 - u^{\star}_{\phi}) = (I - \widetilde{P}_{\mM_{\star}})(u^{\star} - u^{\star}_{\phi})$.
\end{proof}

\begin{remark}
  There are several notable remarks for the main theorem above.
\begin{itemize}
\item The ``sufficiently small'' condition in \eqref{thm_str_cont_lya_eq01} only imposes restrictions on $\delta$ and $\delta^{\star}$,
      as $p$ is only a linear function of $\delta$ and $\delta^{\star}$.
\item The error bound in \eqref{thm_str_cont_lya2_eq02} is independent of $\lambda$.
      This is important since $\lambda$ appears in \eqref{thm_str_cont_lya2_eq01} as a denominator whose smallness may slow down the convergence.
      It shows that $\lambda$, though critical for theoretical analysis, does not directly influence the convergence.
\end{itemize}
\end{remark}

\section{The convergence at the discrete level}
\label{sec:conv_disc}

In this section, we present the discrete linear convergence analysis for the IPPGD method in \eqref{PPGD0}.
However, special attention should be paid to ensuring the admissible range for the pseudo step size $\tau_k$.
It should be large enough to include $1$ to recover the classical projection methods.
Let us construct the discrete Lyapunov sequences:
\begin{equation}
  \begin{split}
\label{cont_Lyap_disc}
&\mathcal{E}_k  =  \lambda \alpha \mathcal{E}^{(1)}_{k} + \mathcal{E}^{(2)}_{k}, \\ 
\text{with} ~~ & \mathcal{E}^{(1)}_{k}:=\mathcal{E}^{(1)}(u_k) = D_f(u_k,u^{\star}_\phi)~~ \text{and} ~~ 
\mathcal{E}^{(2)}_{k}:= \mathcal{E}^{(2)}(u_k) = \frac{1}{2}\| (I - \widetilde{P}_{\mM_{\star}})(u_k - u^{\star}_{\phi}) \|^2_{M_{\star}}.
  \end{split}
\end{equation}

We then generalize the assumptions given in Subsection \ref{assum_metric} to the discrete case.
\begin{asump}
\label{asump_M_disc}
Given a time series of metrics $\mM_k$, assume:
\begin{itemize}
\item[\textbf{(H1')}] \label{asump_M_h1_disc} 
There exist $\mM_{\star} = \{ M_{\star}, \widetilde{S}_{\star} \}$ and functions $\Theta_{k} \in [0,\theta]$ and $\widetilde{\Theta}_{k} \in [0,\tilde{\theta}]$ such that 
\begin{subequations}
\label{asump_M_eq2_disc}
\begin{align}
 & M_k - M_{\star} \preccurlyeq  \Theta_{k} M_{\star}, ~~~~~~ M_{\star} - M_k \preccurlyeq  \Theta_{k} M_k, \label{asump_M_eq21_disc} \\
& \widetilde{S}^{-1}_k - \widetilde{S}^{-1}_{\star} \preccurlyeq \widetilde{\Theta}_{k} \widetilde{S}^{-1}_{\star}, ~~~ 
\widetilde{S}^{-1}_{\star} - \widetilde{S}^{-1}_k \preccurlyeq \widetilde{\Theta}_{k} \widetilde{S}^{-1}_k.  \label{asump_M_eq22_disc}
\end{align} 
\end{subequations}
Denote $\Theta_{k,m}: = \max\{ \Theta_k, \widetilde{\Theta}_k \}$ 
and recall $\theta_m = \max\{ \theta, \tilde{\theta} \}$.
\item[\textbf{(H2')}] \label{asump_M_h2_disc} 
There is a time sequence $\delta_k$ known as the inexactness level with a uniform upper bound $\delta_{\max}$, i.e., $\delta_k \le \delta_{\max}$, $\forall k$, such that
\begin{equation}
\label{asump_M_eq3_disc}
(1-\delta_k) \widetilde{S}_k \preccurlyeq S_k \preccurlyeq \widetilde{S}_k, ~~~ \forall k \ge 0.
\end{equation}
\item[\textbf{(H3')}] \label{asump_M_h2_new_disc} Under \hyperref[asump_M_h1]{\textbf{(H1')}} and \hyperref[asump_M_h2]{\textbf{(H2')}}, 
there exists a constant $K_S$
\begin{equation}
\label{asump_M_eq4_disc}
-K_S \Theta_{k,m} \delta_k S^{-1}_{\star} \preccurlyeq (\widetilde{S}^{-1}_k - S^{-1}_k ) - (\widetilde{S}^{-1}_{\star} - S^{-1}_{\star})   
\preccurlyeq K_S \Theta_{k,m} \delta_k S^{-1}_{\star}.
\end{equation}
\item[\textbf{(H4')}] \label{asump_M_h3_disc} 
Let $u^{\star}_{\phi}$ be the fixed point of the function $\phi$ in \eqref{phi_rhs}.
The sequence $\{\mM_k\}$ and $\mM_{\star}$ satisfy
\begin{equation}
\label{Thetat_bound_disc}
\Theta_{k,m} \le  K_{\theta} \sqrt{ D_f(u_k , u^{\star}_{\phi})  },
\end{equation}
where $K_{\theta}$ is a uniform constant independent of $t$.
\end{itemize}
\end{asump}

Now, we proceed to establish the discrete versions of Lemmas \ref{lemma_error_ineq} and \ref{lem_est_uPiu}.
It should be noted that in these two lemmas, we intentionally avoid imposing any restrictions on 
$\tau_k$ to maintain their universality. 
Instead, the conditions regarding $\tau_k$ are deferred to the forthcoming main theorems.
\begin{lemma}[Error inequality for $\mathcal{E}^{(1)}_k$]
\label{lemma_error_ineq_disc}
Under \hyperref[asump_M_h1_disc]{\textbf{(H1')}}, \hyperref[asump_M_h2_disc]{\textbf{(H2')}} and \hyperref[asump_M_h2_new_disc]{\textbf{(H3')}} in Assumption \ref{asump_M_disc}, and $\alpha \le L_{f,M_{\star}}^{-1}$ and $\delta^{\star}< (4\kappa_{f,M_{\star}})^{-1}$, 
there holds
\begin{equation}
\begin{split}
\label{lemma_error_ineq_disc_eq0}
\frac{\mathcal{E}^{(1)}_{k+1} - \mathcal{E}^{(1)}_{k}}{\tau_k} 
\le   -  \alpha \left( \frac{ \mu_{f,M_k} \kappa^{-1}_{f,M_k}}{2} - 3 L^2_{f,M_k} \tau_k \alpha  \right)  \mathcal{E}^{(1)}_{k}
+  K_{1,k} \alpha^{-1} \mathcal{E}^{(2)}_{k}  + \alpha p^2 K_{2,k} \Theta^2_{k,m},
\end{split}
\end{equation}
where
\begin{equation*}
\begin{split}
&K_{1,k} = 2(2\kappa^{2}_{f,M_k} + \alpha^2 L^2_{f,M_k})  +  \frac{3L_{f,M_k}}{2} \tau_k\alpha, \\
&K_{2,k} =  \left( \frac{3L_{f,M_k}}{2}   \tau_k \alpha   + 2 \kappa^2_{f,M_k} \right) (1+\theta_m)  \| \nabla f(u^{\star}) \|^2_{M^{-1}_{\star}},
\end{split}
\end{equation*}
and $p = p(\delta_{\max},\delta^{\star};\kappa_{f,M_{\star}},K_S)$ is given in \eqref{lem_uxi_eq04}.
\end{lemma}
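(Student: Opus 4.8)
The plan is to carry the continuous estimate of Lemma~\ref{lemma_error_ineq} over to the forward Euler iteration \eqref{PPGD1_disc} and to isolate the single extra term produced by discretization. Set $\zeta_k:=u_k-u^{\star}_{\phi}$, $\zeta_{f,k}:=\nabla f(u_k)-\nabla f(u^{\star}_{\phi})$, and write the increment as $u_{k+1}-u_k=\tau_k d_k$ with $d_k:=\widetilde{P}_{\mM_k}(u_k-\alpha M_k^{-1}\nabla f(u_k))-u_k$, i.e.\ $d_k$ is the velocity field of \eqref{PPGD1} sampled at $u_k$. The upper bound in \eqref{conv_Lip} is the $L_{f,M_k}$-smoothness inequality $f(u_{k+1})-f(u_k)\le\langle\nabla f(u_k),u_{k+1}-u_k\rangle+\tfrac{L_{f,M_k}}{2}\|u_{k+1}-u_k\|^2_{M_k}$; subtracting $\langle\nabla f(u^{\star}_{\phi}),u_{k+1}-u_k\rangle$ and dividing by $\tau_k$ gives
\[
\frac{\mathcal{E}^{(1)}_{k+1}-\mathcal{E}^{(1)}_{k}}{\tau_k}\le \langle\nabla f(u_k)-\nabla f(u^{\star}_{\phi}),d_k\rangle+\frac{L_{f,M_k}}{2}\,\tau_k\|d_k\|^2_{M_k}.
\]
The first term on the right is precisely the continuous quantity $\tfrac{\dd}{\dd t}\mathcal{E}^{(1)}$ evaluated at $u_k$ with metric $M_k$, so the $R_1,R_2,R_3$ estimates in the proof of Lemma~\ref{lemma_error_ineq} apply verbatim and reproduce the base bound, i.e.\ the dissipation $-\tfrac{\mu_{f,M_k}\kappa^{-1}_{f,M_k}}{2}\alpha\mathcal{E}^{(1)}_k$ together with the continuous constants $2(2\kappa^2_{f,M_k}+\alpha^2L^2_{f,M_k})$ in front of $\alpha^{-1}\mathcal{E}^{(2)}_k$ and $2(1+\theta_m)\|\nabla f(u^{\star})\|^2_{M_\star^{-1}}\kappa^2_{f,M_k}$ in front of $\alpha p_k^2\Theta^2_{k,m}$. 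Every $\tau_k$-proportional correction in the statement must therefore originate from the genuinely new term $\tfrac{L_{f,M_k}}{2}\tau_k\|d_k\|^2_{M_k}$.

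The heart of the argument is to bound $\|d_k\|^2_{M_k}$ by the same three mechanisms already present in the continuous analysis. Decompose $d_k=-(T_1+T_2+T_3)$ exactly as the velocity field splits in the proof of Lemma~\ref{lem_est_uPiu}, with $T_1=(I-\widetilde{P}_{\mM_k})\zeta_k$, $T_2=\alpha\widetilde{P}_{\mM_k}M_k^{-1}\zeta_{f,k}$, and $T_3=u^{\star}_{\phi}-\widetilde{P}_{\mM_k}(u^{\star}_{\phi}-\alpha M_k^{-1}\nabla f(u^{\star}_{\phi}))$, and use $\|d_k\|^2_{M_k}\le 3(\|T_1\|^2_{M_k}+\|T_2\|^2_{M_k}+\|T_3\|^2_{M_k})$. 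For $T_2$, the contraction $\|\widetilde{P}_{\mM_k}v\|_{M_k}\le\|v\|_{M_k}$ coming from \eqref{lem_Pi_est_eq011} and \eqref{lem_Pi_est_eq02} gives $\|T_2\|^2_{M_k}\le\alpha^2\|\zeta_{f,k}\|^2_{M_k^{-1}}$, and crucially the gradient--Bregman bound \eqref{eq: Breg bound}---rather than combining Lipschitz continuity with strong convexity, which would introduce a spurious $\kappa_{f,M_k}$---yields $\|T_2\|^2_{M_k}\le 2\alpha^2L_{f,M_k}\mathcal{E}^{(1)}_k$; together with the factor $3$ from the splitting and the prefactor $\tfrac{L_{f,M_k}}{2}\tau_k$, this is exactly the $3L^2_{f,M_k}\tau_k\alpha^2$ correction to the coefficient of $\mathcal{E}^{(1)}_k$. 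For $T_3$, squaring \eqref{lem_uxi_eq01} of Lemma~\ref{lem_uxi} gives $\|T_3\|^2_{M_k}\le(1+\theta_m)\alpha^2 p_k^2\|\nabla f(u^{\star})\|^2_{M_\star^{-1}}\Theta^2_{k,m}$, supplying the extra $\tfrac{3L_{f,M_k}}{2}\tau_k\alpha$ factor inside $K_{2,k}$. For $T_1$, whose norm is the $M_k$-infeasibility of $\zeta_k$, I would bound it through $\mathcal{E}^{(2)}_k$ and fold the outcome into $K_{1,k}$.

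The main obstacle is exactly this last piece. Unlike $T_2$ and $T_3$, whose squared norms fall cleanly into $\mathcal{E}^{(1)}_k$ and the $\Theta^2_{k,m}$ remainder, $\|T_1\|_{M_k}=\|(I-\widetilde{P}_{\mM_k})\zeta_k\|_{M_k}$ is measured in the running metric $\mM_k$, whereas $\mathcal{E}^{(2)}_k=\tfrac12\|(I-\widetilde{P}_{\mM_\star})\zeta_k\|^2_{M_\star}$ is anchored at the fixed $\mM_\star$. Bridging the two requires passing to the exact projections via \eqref{lem_Pi_est_eq03}, i.e.\ $\|(I-\widetilde{P}_{\mM_k})\zeta_k\|_{M_k}\le\|(I-P_{M_k})\zeta_k\|_{M_k}$, and then converting the $M_k$-distance to $\ker(B)$ into an $M_\star$-distance using the Loewner comparisons of \hyperref[asump_M_h1_disc]{\textbf{(H1')}}; keeping this chain tight enough to land on the advertised coefficient $\tfrac{3L_{f,M_k}}{2}\tau_k\alpha$ in $K_{1,k}$ is the delicate bookkeeping step. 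A minor but essential point throughout is that the Bregman reference in the first-order term must be taken as $u^{\star}_{\phi}$ (consistent with $\zeta_k=u_k-u^{\star}_{\phi}$ and with the anchor used in Lemma~\ref{lemma_error_ineq}), and that, as in the continuous case, no lower or upper restriction on $\tau_k$ is invoked, so the resulting inequality remains valid for all $\tau_k$ as the statement intends.
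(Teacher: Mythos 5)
Your proposal is correct and is, in all essentials, the paper's own proof: the paper splits $\mathcal{E}^{(1)}_{k+1}-\mathcal{E}^{(1)}_{k}=D_f(u_{k+1},u_k)+\langle\nabla f(u_k)-\nabla f(u^{\star}_{\phi}),u_{k+1}-u_k\rangle$ (your smoothness step, reorganized), recognizes the second term as $\tau_k\tfrac{\dd}{\dd t}\mathcal{E}^{(1)}(u_k)$ and quotes Lemma \ref{lemma_error_ineq} for it, and bounds the first term by $\tfrac{L_{f,M_k}}{2}\tau_k^2\|d_k\|^2_{M_k}$ using exactly your splitting into $T_1,T_2,T_3$, your Bregman bound \eqref{eq: Breg bound} for $T_2$, and the square of \eqref{lem_uxi_eq01} for $T_3$.

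The one step you leave open --- passing from $\|T_1\|^2_{M_k}=\|(I-\widetilde{P}_{\mM_k})\zeta_k\|^2_{M_k}$ to $\mathcal{E}^{(2)}_k$ --- is precisely the step the paper does not justify either: its displayed chain simply replaces $\|(I-\widetilde{P}_{\mM_k})\zeta_k\|^2_{M_k}$ by $\mathcal{E}^{(2)}_k$ with no supporting argument, and your instinct that the constant $1$ cannot be reached by honest bookkeeping is correct. Writing the quantity through Schur complements and using $S_k\preccurlyeq\widetilde{S}_k$ from \eqref{asump_M_eq3_disc}, $\widetilde{S}_k^{-1}\preccurlyeq(1+\tilde{\theta})\widetilde{S}_{\star}^{-1}$ from \eqref{asump_M_eq22_disc}, and $(1-\delta^{\star})\widetilde{S}_{\star}^{-1}\preccurlyeq\widetilde{S}_{\star}^{-1}S_{\star}\widetilde{S}_{\star}^{-1}$, one gets
\begin{equation*}
\|(I-\widetilde{P}_{\mM_k})\zeta_k\|^2_{M_k}
=\langle B\zeta_k,\widetilde{S}_k^{-1}S_k\widetilde{S}_k^{-1}B\zeta_k\rangle
\le\langle B\zeta_k,\widetilde{S}_k^{-1}B\zeta_k\rangle
\le(1+\tilde{\theta})\langle B\zeta_k,\widetilde{S}_{\star}^{-1}B\zeta_k\rangle
\le\frac{2(1+\tilde{\theta})}{1-\delta^{\star}}\,\mathcal{E}^{(2)}_k,
\end{equation*}
and the prefactor cannot be removed: even for $\mM_k=\mM_{\star}$ the left-hand side equals $2\mathcal{E}^{(2)}_k$, because of the $\tfrac12$ in \eqref{cont_Lyap_disc}. (Your alternative route via \eqref{lem_Pi_est_eq03} plus the Loewner comparison of \textbf{(H1')} yields the comparable factor $2(1+\theta)/(1-\delta^{\star})^2$.) Consequently the lemma holds with the $\tfrac{3L_{f,M_k}}{2}\tau_k\alpha$ contribution to $K_{1,k}$ multiplied by $2(1+\tilde{\theta})/(1-\delta^{\star})$; this is immaterial downstream, since Theorem \ref{thm_lyapunov_disc} only uses the magnitude of $K_{1,k}$ through the smallness conditions on $\lambda$, $\delta_k$ and $\tau_k$. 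Completed this way, your argument is as rigorous as --- indeed more careful than --- the paper's.
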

\begin{proof}
Let us first notice that
\begin{equation}
\label{lemma_error_ineq_disc_eq1}
\mathcal{E}^{(1)}_{k+1} - \mathcal{E}^{(1)}_{k} = 
\underbrace{ D_f(u_{k+1},u_k) }_{R_1} + \underbrace{ \langle \nabla f(u_k) - \nabla f(u^{\star}_{\phi}), u_{k+1} - u_k \rangle }_{R_2}.
\end{equation}
For the first term in the right-hand side above, employing a similar decomposition to \eqref{lemma_error_ineq_eq1} with \eqref{lem_uxi_eq01} in Lemma \ref{lem_uxi} and \eqref{conv_Lip}, we obtain
\begin{equation*}
\begin{split}
 R_1  \le & \frac{L_{f,M_k}}{2} \| u_{k+1} - u_k \|^2_{M_k} 
= \frac{L_{f,M_k}}{2} \tau^2_k \| u_k - \widetilde{P}_{\mM_k}(u_k - \alpha M^{-1}_k \nabla f(u_k)) \|^2_{M_k} \\
 \le &  \frac{3L_{f,M_k}}{2} \tau^2_k 
 \left( 
  \| \zeta_k - \widetilde{P}_{\mM_k} \zeta_k \|^2_{M_k} 
  + \alpha^2 \|  \widetilde{P}_{\mM_k} M^{-1}_k \zeta_{f,k} \|^2_{M_k} 
+ \| u^{\star}_{\phi} - \widetilde{P}_{\mM_k} ( u^{\star}_{\phi} - \alpha M^{-1}_k \nabla f(u^{\star}_\phi) ) \|^2_{M_k} \right) \\
\le &  \frac{3L_{f,M_k}}{2} \tau^2_k
\left( \mathcal{E}^{(2)}_{k}  + 2 \alpha^2 L_{f,M_k} \mathcal{E}^{(1)}_{k} +  (1+\theta_m) \alpha^2 p^2 \| \nabla f(u^{\star}) \|^2_{M^{-1}_{\star}} \Theta^2_{k,m} \right) .
\end{split}
\end{equation*}
Additionally, for the second term in \eqref{lemma_error_ineq_disc_eq1}, 
by Lemma \ref{lemma_error_ineq}, we have
\begin{equation*}
\begin{split}
 R_2 &
 = \tau_k \langle \nabla f(u_k) - \nabla f(u^{\star}_{\phi}), -u_k + \widetilde{P}_{\mM_k}(u_k - \alpha M^{-1}_k \nabla f(u_k)) \rangle \\
& =  \tau_k \frac{\dd}{\dd t} \mathcal{E}^{(1)}(u_k)  
  \le \tau_k \left( - \frac{ \mu_{f,M_k} \kappa^{-1}_{f,M_k}}{2} \alpha \mathcal{E}^{(1)}_{k} 
+ \widetilde{K}_{1,k} \alpha^{-1}  \mathcal{E}^{(2)}_{k} + \widetilde{K}_{2,k} \alpha  p^2 \Theta^2_{k,m} \right),
\end{split}
\end{equation*}
where $\widetilde{K}_{1,k}$ and $\widetilde{K}_{2,k}$ are the constants $K_1$ and $K_2$ in Lemma \ref{lemma_error_ineq}
being evaluated at $\kappa_{f,M_k}$ and $L_{f,M_k}$.
Then, combining all these estimates above, we have the desired result.
\end{proof}


\begin{lemma}[Error inequality for $\mathcal{E}^{(2)}_k$]
\label{lem_est_uPiu_disc}
Under the conditions of Lemma \ref{lemma_error_ineq_disc} and the extra assumption of
$\delta_{\max} := \max_k\{ \delta_k \}_{k\ge 0} \le \frac{1}{8\theta_m+9}$,
there holds for any $\epsilon\ge 0$
\begin{equation}
\begin{split}
\label{lem_est_uPiu_eq0_disc}
\frac{\mathcal{E}^{(2)}_{k+1} - \mathcal{E}^{(2)}_{k} }{\tau_k}
\le & - \left( \frac{7}{4} - \epsilon -  K_{5,k}\tau_k  \right) \mathcal{E}^{(2)}_{k}
+  \alpha^2 \delta^2_k K_{3,k} \epsilon^{-1} \mathcal{E}^{(1)}_{k} 
+  \alpha^2  \left( (1+\tau_k) p^2 + 16(\delta^{\star})^2 \right)  K_{4,k} \Theta^2_{k,m},
\end{split}
\end{equation}
where $p = p(\delta_{\max},\delta^{\star};\kappa_{f,M_{\star}},K_S)$ is given by \eqref{lem_uxi_eq04}, 
$K_{3,k} =  \left( \frac{3}{2} \tau_k\epsilon +  4 \right) (1+\theta_m)^2 L_{f,M_k}$, 
$K_{4,k} =  \frac{3}{2}(1+\theta_m)  \| \nabla f(u^{\star}) \|^2_{M_{\star}^{-1}}$,
and 
$K_{5,k} = \frac{3}{2}(1 + \frac{3}{2}(1+\theta_m)^2\delta_k^2)$.
\end{lemma}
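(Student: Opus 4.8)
The plan is to mirror exactly the structure of Lemma \ref{lemma_error_ineq_disc}: decompose the one-step increment into a piece that coincides with the continuous time-derivative (handled by citing the continuous Lemma \ref{lem_est_uPiu}) plus a genuinely discrete quadratic remainder that I estimate separately. Writing $d_k := u_k - \widetilde{P}_{\mM_k}(u_k - \alpha M_k^{-1}\nabla f(u_k))$, the scheme \eqref{PPGD1_disc} reads $\zeta_{k+1} = \zeta_k - \tau_k d_k$ with $\zeta_k = u_k - u^{\star}_{\phi}$. Abbreviating $P := I - \widetilde{P}_{\mM_{\star}}$ and expanding the square in $\mathcal{E}^{(2)}_{k+1} = \tfrac12\|P\zeta_{k+1}\|^2_{M_{\star}}$ gives
\[
\frac{\mathcal{E}^{(2)}_{k+1}-\mathcal{E}^{(2)}_k}{\tau_k} = -\langle P\zeta_k, P d_k\rangle_{M_{\star}} + \frac{\tau_k}{2}\,\|P d_k\|^2_{M_{\star}}.
\]

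For the linear term I would observe that $-\langle P\zeta_k, Pd_k\rangle_{M_{\star}}$ is precisely the continuous derivative $\tfrac{\dd}{\dd t}\mathcal{E}^{(2)}$ from \eqref{lem_est_uPiu_eq2} evaluated at $u_k$, because the flow \eqref{PPGD1} satisfies $u'=-d$. Hence Lemma \ref{lem_est_uPiu} applies verbatim with every $t$-quantity replaced by its $k$-counterpart, producing the terms $-(\tfrac74-\epsilon)\mathcal{E}^{(2)}_k$, the part $4(1+\theta_m)^2 L_{f,M_k}\epsilon^{-1}(\alpha\delta_k)^2\mathcal{E}^{(1)}_k$ (this is the ``$4$'' inside $K_{3,k}$), and the part $\tfrac32(1+\theta_m)\|\nabla f(u^{\star})\|^2_{M_{\star}^{-1}}(16(\delta^{\star})^2+p_k^2)\alpha^2\Theta^2_{k,m}$ (the full $K_{4,k}$ contribution together with the constant ``$1$'' multiplying $p_k^2$). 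This is exactly the device already used for $R_2$ in Lemma \ref{lemma_error_ineq_disc}.

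The new work is the quadratic remainder $\tfrac{\tau_k}{2}\|Pd_k\|^2_{M_{\star}}$. I would first use the three-term decomposition from $R_1$ of Lemma \ref{lemma_error_ineq_disc}, namely $d_k = (I-\widetilde{P}_{\mM_k})\zeta_k + \alpha\,\widetilde{P}_{\mM_k}M_k^{-1}\zeta_{f,k} + C_k$ with $C_k = u^{\star}_{\phi} - \widetilde{P}_{\mM_k}(u^{\star}_{\phi} - \alpha M_k^{-1}\nabla f(u^{\star}_{\phi}))$, and then regroup as $Pd_k = P\zeta_k + r_k$, where the remainder $r_k = -P\widetilde{P}_{\mM_k}\zeta_k + \alpha\,P\widetilde{P}_{\mM_k}M_k^{-1}\zeta_{f,k} + PC_k$ collects everything beyond the leading term. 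A Young inequality splitting off $P\zeta_k$ with weight $\tfrac32$ gives $\tfrac12\|Pd_k\|^2_{M_{\star}}\le \tfrac32\mathcal{E}^{(2)}_k + \tfrac32\|r_k\|^2_{M_{\star}}$, using $\|P\zeta_k\|^2_{M_{\star}}=2\mathcal{E}^{(2)}_k$ and that $P$ is an $M_{\star}$-contraction (which follows from \eqref{lem_Pi_est_eq011} and \eqref{lem_Pi_est_eq02} under the standing assumption $\widetilde{S}_{\star}\succcurlyeq S_{\star}$). The crucial structural point is that whenever $P=I-\widetilde{P}_{\mM_{\star}}$ acts on a $\widetilde{P}_{\mM_k}$-image, estimate \eqref{lem_Dproj_new_eq01} supplies a factor $\delta_k$: thus $\|P\widetilde{P}_{\mM_k}\zeta_k\|_{M_{\star}}=O(\delta_k)\sqrt{\mathcal{E}^{(2)}_k}$ contributes the $\delta_k^2$-correction inside $K_{5,k}$, the $\zeta_{f,k}$ piece becomes $O(\alpha\delta_k)$ so that (with \eqref{conv_Lip}--\eqref{eq: Breg bound}) it yields the extra $\tfrac32\tau_k\epsilon$ inside $K_{3,k}$, and $\|PC_k\|_{M_{\star}}\le\|C_k\|_{M_{\star}}$ is controlled by \eqref{lem_uxi_eq01} of Lemma \ref{lem_uxi}, producing the extra $\tau_k p_k^2$ that upgrades $p_k^2$ to $(1+\tau_k)p_k^2$. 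Adding this to the linear estimate yields \eqref{lem_est_uPiu_eq0_disc}.

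I expect the main obstacle to be the constant bookkeeping in $\tfrac32\|r_k\|^2_{M_{\star}}$: extracting the leading coefficient $\tfrac32$ together with a \emph{pure} $\delta_k^2$ (rather than an $O(\delta_k)$) dependence in $K_{5,k}$ forces the regrouping $Pd_k=P\zeta_k+r_k$ above, so that the potentially dangerous cross term $\langle P\zeta_k, P\widetilde{P}_{\mM_k}\zeta_k\rangle$ is never Young-split against the leading $\mathcal{E}^{(2)}_k$ but instead lands inside $\|r_k\|^2$, where its $\delta_k$-smallness from \eqref{lem_Dproj_new_eq01} is squared. Threading the three contributions through this single split while keeping the $\mathcal{E}^{(1)}_k$- and $\Theta^2_{k,m}$-parts cleanly separated, so that they merge consistently with the constants inherited from the continuous lemma, is the delicate part; everything else is routine use of Lemmas \ref{lem_Pi_est}, \ref{lem_uxi} and the contraction property of $P$.
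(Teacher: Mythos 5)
Your proposal follows the paper's proof essentially step for step: the paper makes the identical split of $\mathcal{E}^{(2)}_{k+1}-\mathcal{E}^{(2)}_k$ into the quadratic remainder $R_1=\tfrac12\|(I-\widetilde{P}_{\mM_{\star}})(\zeta_{k+1}-\zeta_k)\|^2_{M_{\star}}$ and the linear term $R_2=\tau_k\tfrac{\dd}{\dd t}\mathcal{E}^{(2)}(u_k)$, bounds $R_2$ by invoking the continuous Lemma \ref{lem_est_uPiu} exactly as you do, and bounds $R_1$ via the same decomposition of the update direction into $(I-\widetilde{P}_{\mM_k})\zeta_k$, $\alpha\widetilde{P}_{\mM_k}M_k^{-1}\zeta_{f,k}$ and $C_k$, using \eqref{lem_Dproj_new_eq01}, \eqref{conv_Lip}--\eqref{eq: Breg bound} and \eqref{lem_uxi_eq01} for the three pieces. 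The only cosmetic difference is that you peel off $P\zeta_k$ with a weighted Young inequality before squaring the remainder $r_k$, whereas the paper expands all terms of $\|Pd_k\|^2_{M_{\star}}$ at once; both yield the same structure of $K_{3,k}$, $K_{4,k}$, $K_{5,k}$ (up to the same kind of constant bookkeeping the paper itself is loose about).
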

\begin{proof}
Notice 
\begin{equation}
\label{lem_est_uPiu_eq1_disc}
\mathcal{E}^{(2)}_{k+1} - \mathcal{E}^{(2)}_{k} 
=\underbrace{ \frac{1}{2} \| (I - \widetilde{P}_{\mM_{\star}}) (\zeta_{k+1} - \zeta_k) \|^2_{M_{\star}} }_{R_1}
+ \underbrace{ \langle (I - \widetilde{P}_{\mM_{\star}}) (\zeta_{k+1} - \zeta_k),  (I - \widetilde{P}_{\mM_{\star}})  \zeta_k   \rangle_{M_{\star}} }_{R_2}.
\end{equation}
For $R_1$, the same argument as Lemma \ref{lemma_error_ineq_disc} leads to
\begin{equation*}
\begin{split}
\label{lem_est_uPiu_eq2_disc_new}
2R_1 = & \| (I - \widetilde{P}_{\mM_{\star}}) (u_{k+1}-u_k) \|^2_{M_{\star}}
=  \tau^2_k \| (I - \widetilde{P}_{\mM_{\star}}) (u_k - \widetilde{P}_{\mM_k} (u_k - \alpha M^{-1}_k \nabla f(u_k)) ) \|^2_{M_{\star}} \\
 \le & 3\tau^2_k  \left( \| (I - \widetilde{P}_{\mM_{\star}}) \zeta_k  \|^2_{M_{\star}} 
  +  \| (I - \widetilde{P}_{\mM_{\star}}) \widetilde{P}_{\mM_k} \zeta_k \|^2_{M_{\star}}  \right. \\
   &  \left. + \alpha^2 \| (I - \widetilde{P}_{\mM_{\star}}) \widetilde{P}_{\mM_k} M^{-1}_k \zeta_{f,k} \|^2_{M_{\star}}
    + \| (I - \widetilde{P}_{\mM_{\star}}) (u^{\star}_{\phi} - \widetilde{P}_{\mM_k} ( u^{\star}_{\phi} - \alpha M^{-1}_k \nabla f(u^{\star}_\phi) )) \|^2_{M_{\star}} \right) \\
 \le & 3\tau^2_k (1+\theta_m)
 \left( \frac{\mathcal{E}^{(2)}_{k}}{(1+\theta_m)} + \frac{(1+\theta_m) \delta^2_k }{(1-\delta^{\star})^{2}}  \mathcal{E}^{(2)}_{k}  +   L_{f,M_k} (\alpha\delta_k)^2 \mathcal{E}^{(1)}_{k} 
 + \alpha^2 p^2 \| \nabla f(u^{\star}) \|^2_{M^{-1}_{\star}} \Theta^2_{k,m} \right).
\end{split}
\end{equation*}
As for $R_2$, by Lemma \ref{lem_est_uPiu}, we have
\begin{equation*}
R_2 = \tau_k \frac{\dd}{\dd t} \mathcal{E}^{(2)}(u_k) 
\le - \left( \frac{7}{4} - \epsilon \right)\tau_k \mathcal{E}^{(2)}_{k}
+ \tilde{K}_{3,k} \epsilon^{-1} \tau_k (\alpha \delta_k)^2  \mathcal{E}^{(1)}_{k} 
+ {K}_{4,k} \tau_k \left(16(\delta^{\star})^2 +  p^2 \right) \alpha^2 \Theta^2_{k,m},
\end{equation*}
where $\tilde{K}_{3,k}$ and ${K}_{4,k}$ are the constants in Lemma \ref{lem_est_uPiu} 
being evaluated at $\kappa_{f,M_k}$ and $L_{f,M_k}$.
Putting these estimates into \eqref{lem_est_uPiu_eq1_disc} yields the desired estimate.
\end{proof}


\begin{theorem}[\RG{Discrete strong Lyapunov property}]
\label{thm_lyapunov_disc}
Under Assumption \ref{asump_M_disc}, 
assume $\lambda \le (16 K_{1,k})^{-1}$, \RGI{with $K_{1,k}$ given in Lemma \ref{lemma_error_ineq_disc}}, and $\alpha \le L_{f,M_{\star}}^{-1}$ and $\delta$ is small enough such that
\begin{equation}
\begin{split}
\label{thm_lyapunov_disc_eq01}
& \delta_k \le 
\min\left\{ \frac{\sqrt{\lambda} }{21(1+\theta_m)\kappa_{f,M_k}}, ~ \frac{1}{8\theta_m+9} \right\}, \\
~~~
& \delta^{\star} \le \min\bigg\{  \frac{\sqrt{\lambda} }{ 12\sqrt{2} K_{\theta} \sqrt{(1+\theta_m)\kappa_{f,M_k}} C^{\star}} ,\frac{1}{4\kappa_{f,M_{\star}}} \bigg\} , \\
~~~
& p \le \frac{\sqrt{\lambda}}{ 9  K_{\theta} \sqrt{(1+\theta_m)\kappa_{f,M_k}} C^{\star}},
\end{split}
\end{equation}
and $\tau_k$ is sufficiently small such that
\begin{equation}
\label{thm_lyapunov_disc_eq011}
 \tau_k\alpha \le  \frac{1}{36\kappa^2_{f,M_k} L_{f,M_k} }, 
~~~
\tau_k \le \frac{49}{48(1 + \frac{3}{2}(1+\theta_m)\delta^2_k)},
\end{equation}
\RGI{
where $p = p(\delta_{\max},\delta^{\star};\kappa_{f,M_{\star}},K_S)= (9\kappa_{f,M_{\star}}+4)\delta^{\star} 
   + (1 + 2K_S) \delta_{\max}$ is given in \eqref{lem_uxi_eq04},
   $C^{\star} = \mu^{1/2}_{f,M_{\star}} \| \nabla f(u^{\star}) \|_{M_{\star}^{-1}}$,
   and $\delta^{\star}$, $\delta_{\max}$, $\theta_m$, and $K_S$ are all defined in Assumption \ref{asump_M}.
}
Then, there holds
\begin{equation}
\label{thm_lyapunov_disc_eq02}
\frac{\mathcal{E}_{k+1} - \mathcal{E}_k }{\tau_k}  \le - \omega_k \mathcal{E}_k, 
~~~~~~ \omega_k = \min\{ \alpha \frac{  \mu_{f,M_k} \kappa^{-1}_{f,M_k}}{4},  \frac{1}{32} \}.
\end{equation}
\end{theorem}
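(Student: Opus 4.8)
The plan is to mirror the continuous argument of Theorem \ref{thm_str_cont_lya}, now combining the two discrete error inequalities of Lemmas \ref{lemma_error_ineq_disc} and \ref{lem_est_uPiu_disc}. First I would form the weighted difference quotient
\[
\frac{\mathcal{E}_{k+1}-\mathcal{E}_k}{\tau_k} = \lambda\alpha\,\frac{\mathcal{E}^{(1)}_{k+1}-\mathcal{E}^{(1)}_k}{\tau_k} + \frac{\mathcal{E}^{(2)}_{k+1}-\mathcal{E}^{(2)}_k}{\tau_k},
\]
and substitute the two lemmas. Collecting terms, the right-hand side organizes into three pieces: a coefficient multiplying $\mathcal{E}^{(1)}_k$ (built from the leading $-\alpha\mu_{f,M_k}\kappa^{-1}_{f,M_k}/2$ decay, the $K_{3,k}$-feedback from $\mathcal{E}^{(2)}_k$, and the $\tau_k$-corrections), a coefficient multiplying $\mathcal{E}^{(2)}_k$ (the $7/4$ decay offset by $\lambda K_{1,k}$ and by $K_{5,k}\tau_k$), and residual terms proportional to $\Theta^2_{k,m}$.

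Second, I would eliminate the $\Theta^2_{k,m}$ residuals by invoking \hyperref[asump_M_h3_disc]{\textbf{(H4')}} in the form $\Theta^2_{k,m} \le \mu_{f,M_\star}K_\theta^2\|u_k - u^\star_\phi\|^2_{M_\star}$, and then converting $\|u_k - u^\star_\phi\|^2_{M_\star}$ into $\mathcal{E}^{(1)}_k$ through $\mu$-strong convexity \eqref{conv_Lip} (namely $D_f(u_k,u^\star)\ge \tfrac{\mu_{f,M_\star}}{2}\|u_k - u^\star\|^2_{M_\star}$), together with the triangle inequality and the equilibrium estimate of Lemma \ref{lem_u_diff} to absorb the $u^\star$--$u^\star_\phi$ discrepancy. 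Since the coefficients $K_{2,k},K_{4,k}$ carry the factor $\|\nabla f(u^\star)\|^2_{M^{-1}_\star}$, each converted residual emerges weighted by $(C^\star)^2 K_\theta^2 p_k^2$ (or $(C^\star)^2 K_\theta^2(\delta^\star)^2$), which is exactly why the smallness bounds on $\delta_k$, $\delta^\star$ and $p_k$ in \eqref{thm_lyapunov_disc_eq01} are phrased through $C^\star$, $K_\theta$, $\kappa_{f,M_k}$ and $\sqrt{\lambda}$.

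Third, fixing $\epsilon=1/4$, I would argue that the conditions \eqref{thm_lyapunov_disc_eq01} force each positive contribution to the $\mathcal{E}^{(1)}_k$ coefficient to be at most $\tfrac18\lambda\mu_{f,M_k}\kappa^{-1}_{f,M_k}\alpha^2$, and that $\lambda\le(16K_{1,k})^{-1}$ keeps the $\lambda K_{1,k}$ feedback from $\mathcal{E}^{(1)}$ into $\mathcal{E}^{(2)}$ negligible against the $7/4$ decay of $\mathcal{E}^{(2)}_k$. The genuinely new step relative to the continuous case is controlling the forward-Euler consistency corrections, all of which are $\tau_k$-linear after division by $\tau_k$: the $-3L^2_{f,M_k}\tau_k\alpha$ perturbation of the $\mathcal{E}^{(1)}_k$ rate, the $K_{5,k}\tau_k$ perturbation of the $\mathcal{E}^{(2)}_k$ rate, and the $\tau_k$-dependent parts of $K_{1,k},\dots,K_{4,k}$ coming from $D_f(u_{k+1},u_k)\le \tfrac{L_{f,M_k}}{2}\tau_k^2\|\cdot\|^2_{M_k}$. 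I would dominate these using \eqref{thm_lyapunov_disc_eq011}: the constraint $\tau_k\alpha\le(36\kappa^2_{f,M_k}L_{f,M_k})^{-1}$ gives $3L^2_{f,M_k}\tau_k\alpha\le\mu_{f,M_k}\kappa^{-1}_{f,M_k}/12$, leaving an $\mathcal{E}^{(1)}_k$ rate of at least $\alpha\mu_{f,M_k}\kappa^{-1}_{f,M_k}/4$, while $\tau_k\le\frac{49}{48(1+\frac32(1+\theta_m)\delta_k^2)}$ keeps $\tfrac74-\epsilon-K_{5,k}\tau_k-\lambda K_{1,k}\ge\tfrac1{32}$.

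Assembling these bounds gives $\frac{\mathcal{E}_{k+1}-\mathcal{E}_k}{\tau_k}\le -\tfrac14\alpha\mu_{f,M_k}\kappa^{-1}_{f,M_k}\,\lambda\alpha\mathcal{E}^{(1)}_k-\tfrac1{32}\mathcal{E}^{(2)}_k$, and since $\mathcal{E}_k=\lambda\alpha\mathcal{E}^{(1)}_k+\mathcal{E}^{(2)}_k$, factoring out the smaller of the two rates yields \eqref{thm_lyapunov_disc_eq02} with $\omega_k=\min\{\alpha\mu_{f,M_k}\kappa^{-1}_{f,M_k}/4,\,1/32\}$. I expect the main obstacle to be verifying that the admissible window \eqref{thm_lyapunov_disc_eq011} for $\tau_k$ is nondegenerate and, crucially, contains $\tau_k=1$ so that the scheme \eqref{PPGD1_disc} recovers the classical IPPGD method \eqref{PPGD0}; this forces one to track the $\tau_k$-linear corrections with sharp constants (factors such as $36$ and $49/48$) rather than merely discarding them, and to confirm that the two step-size restrictions are simultaneously compatible with the smallness requirements on $\delta_k$, $\delta^\star$, $p_k$ and $\lambda$.
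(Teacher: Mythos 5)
Your overall strategy is exactly the paper's: form $\lambda\alpha$ times Lemma \ref{lemma_error_ineq_disc} plus Lemma \ref{lem_est_uPiu_disc}, absorb the $\Theta^2_{k,m}$ residuals into $\mathcal{E}^{(1)}_k$ via \textbf{(H4')} and strong convexity, and use \eqref{thm_lyapunov_disc_eq01}--\eqref{thm_lyapunov_disc_eq011} to dominate every perturbation. However, your numerical calibration fails in two concrete places, and since the entire content of this theorem is that the \emph{stated} constants work (in particular that $\tau_k=1$ remains admissible), this is a genuine gap rather than a cosmetic one. First, your choice $\epsilon=1/4$ is incompatible with the stated $\tau_k$-bound: the coefficient of $\mathcal{E}^{(2)}_k$ is $\frac{7}{4}-\epsilon-\lambda K_{1,k}-\tau_k K_{5,k}$, and \eqref{thm_lyapunov_disc_eq011} only guarantees $\tau_k K_{5,k}\le\frac{49}{32}$; with $\epsilon=\frac14$ and $\lambda K_{1,k}\le\frac{1}{16}$ the lower bound is $\frac{56-8-2-49}{32}=-\frac{3}{32}$, i.e.\ the $\mathcal{E}^{(2)}_k$ rate can be negative, so your claim that \eqref{thm_lyapunov_disc_eq011} keeps it $\ge\frac{1}{32}$ is false. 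The paper takes $\epsilon=1/8$, for which $\frac{7}{4}-\frac18-\frac1{16}-\frac{49}{32}=\frac{1}{32}$ exactly. Second, capping \emph{each} positive contribution to the $\mathcal{E}^{(1)}_k$ coefficient by $\frac18\lambda\mu_{f,M_k}\kappa^{-1}_{f,M_k}\alpha^2$ cannot yield the claimed rate: there are four such contributions ($3\lambda L^2_{f,M_k}\tau_k\alpha$, $\epsilon^{-1}\delta_k^2K_{3,k}$, the $p_k^2$ terms, and the $16(\delta^\star)^2K_{4,k}K_\theta^2$ term), and four caps of $\frac18$ sum to $\frac12\lambda\mu_{f,M_k}\kappa^{-1}_{f,M_k}$, exhausting the entire available decay $\frac{\lambda\mu_{f,M_k}\kappa^{-1}_{f,M_k}}{2}$ and leaving a rate of $0$, not $\frac{\lambda\mu_{f,M_k}\kappa^{-1}_{f,M_k}}{4}$. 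The paper's allocation is $\frac1{12}+\frac1{12}+\frac1{24}+\frac1{24}=\frac14$, and the constants $21$, $12\sqrt2$, $9$, $36$ in the hypotheses are tuned precisely to these caps (with $\epsilon^{-1}=8$).

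A secondary issue: to handle the $\Theta^2_{k,m}$ terms you invoke strong convexity centered at $u^\star$ and then bridge the $u^\star$--$u^\star_\phi$ discrepancy by a triangle inequality and Lemma \ref{lem_u_diff}. That bridge generates additive residuals proportional to $(\alpha\delta^\star)^2\|\nabla f(u^\star)\|^2_{M^{-1}_\star}$ which are \emph{not} multiples of $\mathcal{E}^{(1)}_k$, and such constant terms are incompatible with the pure geometric decay asserted in \eqref{thm_lyapunov_disc_eq02}. The paper avoids this entirely: in the proofs of the dynamics lemmas the Bregman divergence is effectively centered at $u^\star_\phi$ (see the bound $\|\zeta\|^2_M\ge 2L^{-1}_{f,M}\mathcal{E}^{(1)}$ used there, with $\zeta=u-u^\star_\phi$), so \textbf{(H4')} feeds directly into $\mathcal{E}^{(1)}_k$ with no bridging term and no leftover constant.
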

\begin{proof}
Applying Lemmas \ref{lemma_error_ineq_disc} and \ref{lem_est_uPiu_disc}, 
and setting $\epsilon=1/8$ and using $\lambda \le (16 K_{1,k})^{-1}$, 
employing Assumption  \hyperref[asump_M_h3_disc]{\textbf{(H4')}} we arrive at
\begin{equation*}
\begin{split}
\label{thm_lyapunov_disc_eq1}
\frac{\mathcal{E}_{k+1} - \mathcal{E}_k }{\tau_k}
\le & -  \alpha^2 \left( \frac{ \lambda \mu_{f,M_k} \kappa^{-1}_{f,M_k}}{2} - 3 \lambda L^2_{f,M_k} \tau_k \alpha - \delta^2_k K_{3,k} \epsilon^{-1}  \right)  \mathcal{E}^{(1)}_{k} \\
&  - \left( \frac{7}{4} - \epsilon - K_{5,k}\tau_k - \lambda K_{1,k}  \right)  \mathcal{E}^{(2)}_{k} \\
& + \left( \lambda p^2 K_{2,k}  +    \left( 16(\delta^{\star})^2 + (1+\tau_k) p^2  \right)  K_{4,k} \right) \alpha^2   \Theta^2_{k,m} \\
\le &  -  \alpha^2 \left( \frac{ \lambda \mu_{f,M_k} \kappa^{-1}_{f,M_k}}{2} - 3 \lambda L^2_{f,M_k} \tau_k \alpha 
- 8 \delta^2_k K_{3,k}  \right. \\
& \left.  -  \left( \lambda p^2 K_{2,k}  +    \left( 16(\delta^{\star})^2 + (1+\tau_k)p^2  \right)  K_{4,k} \right) K^2_{\theta} \right)  \mathcal{E}^{(1)}_{k} \\
& - \left( \frac{25}{16}  - \tau_k K_{5,k}  \right)  \mathcal{E}^{(2)}_{k}.
\end{split}
\end{equation*}
We proceed to use the assumptions to estimate the following four terms 
\begin{equation*}
\begin{split}
& 3 \lambda L^2_{f,M_k} \tau_k \alpha, ~ 8 \delta^2_k K_{3,k}  \le \frac{ \lambda \mu_{f,M_k} \kappa^{-1}_{f,M_k}}{12},\\
& p^2 (\lambda K_{2,k}  +  (1+\tau_k)  K_{4,k} ) K^2_{\theta}, ~ 16(\delta^{\star})^2 K_{4,k} K^2_{\theta} \le \frac{ \lambda \mu_{f,M_k} \kappa^{-1}_{f,M_k}}{24}.
\end{split}
\end{equation*}
The first inequality in \eqref{thm_lyapunov_disc_eq011} is equivalent to 
$3 \lambda L^2_{f,M_k} \tau_k \alpha \le \frac{ \lambda \mu_{f,M_k} \kappa^{-1}_{f,M_k}}{12}$,
and the second inequality implies 
$\frac{25}{16}  - \tau_k K_{5,k} \ge \frac{1}{32}$.
It also implies $\tau_k\le \frac{25}{24}$, 
and thus $96(\frac{3}{16}\tau_k + 4) \le 402.3750 \le 21^2$.
Using the first bound of $\delta_k$ in \eqref{thm_lyapunov_disc_eq01}, we obtain
$$
8 \delta^2_k 
\le \frac{\lambda}{12\left( \frac{3}{16}\tau_k +4 \right) \kappa_{f,M_k}^2(1+\theta_m)^2}
= \frac{ \lambda \mu_{f,M_k} \kappa^{-1}_{f,M_k}}{12 K_{3,k}}.
$$
Next, noticing $K_{2,k}/K_{1,k}\le (1+\theta_m) \| \nabla f(u^{\star}) \|^2_{M^{-1}_{\star}}$,
using the upper bound of $p$ in \eqref{thm_lyapunov_disc_eq01},
and noticing $9 > \sqrt{74.25} = \sqrt{24 \left( \frac{1}{16} + (1 + \frac{49}{48} )\frac{3}{2} \right)}$,
we get
\begin{equation}
  \begin{split}
p^2 & \le \frac{\lambda \mu_{f,M_k} \kappa^{-1}_{f,M_k} }{24 \left( \frac{1}{16} + (1 + \frac{49}{48} )\frac{3}{2} \right)K^2_{\theta}  (1+\theta_m) \| \nabla f(u^{\star}) \|^2_{M^{-1}_{\star}} } \\
 & \le \frac{\lambda \mu_{f,M_k} \kappa^{-1}_{f,M_k}}{24 \left( \frac{K_{2,k}}{16 K_{1,k}} + (1+\tau_k)K_{4,k} \right) K^2_{\theta}   }
 \le \frac{\lambda \mu_{f,M_k} \kappa^{-1}_{f,M_k}}{24 \left(\lambda K_{2,k} + (1+\tau_k) K_{4,k} \right) K^2_{\theta}  }.
  \end{split}
\end{equation}
Moreover, the first upper bound of $\delta^{\star}$ in \eqref{thm_lyapunov_disc_eq01} yields the bound of $16 (\delta^{\star})^2 K^2_{\theta} K_{4,k}$.
Then these estimates together yield the desired estimate.
\end{proof}

\begin{theorem}[Optimal linear convergence]
\label{thm_str_cont_lya2_disc}
Under the assumptions of Theorem \ref{thm_lyapunov_disc}, suppose $\tau_k\omega_k \le 1$ for all $k \ge 1$,
\RGI{where $\omega_k = \min\{ \alpha \frac{  \mu_{f,M_k} \kappa^{-1}_{f,M_k}}{4},  \frac{1}{32} \}$ is given in \eqref{thm_lyapunov_disc_eq02}},
then there holds 
\begin{subequations}
\begin{align}
\mathcal{E}_{k}  \le \prod_{l=1}^k \left( 1 - \frac{\min\{ \kappa^{-4}_{f,M_l}/9, ~ \tau_l/2 \}}{16} \right) 
\left( \lambda \alpha \mathcal{E}^{(1)}_0 +  \mathcal{E}^{(2)}_0 \right). 
\label{thm_str_cont_lya2_eq01_disc}
\end{align}
\end{subequations}
If $u_0 \in \ker(B)$, then $ \mathcal{E}^{(2)}_0 \le 4 (\alpha\delta^{\star})^2  \| \nabla f (u^{\star}) \|^2_{M^{-1}_{\star}}$.  
\end{theorem}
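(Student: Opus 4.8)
The plan is to iterate the discrete strong Lyapunov property of Theorem \ref{thm_lyapunov_disc} and then translate the per-step contraction into the explicit, $\alpha$-free rate appearing in the statement. First I would rewrite \eqref{thm_lyapunov_disc_eq02} as $\mathcal{E}_{k+1} \le (1-\tau_k\omega_k)\mathcal{E}_k$; since the hypothesis $\tau_k\omega_k \le 1$ guarantees that every factor lies in $[0,1]$, a straightforward telescoping gives $\mathcal{E}_k \le \prod_l (1-\tau_l\omega_l)\,\mathcal{E}_0$. To reach the right-hand side I use $\mathcal{E}_0 = \lambda\alpha\,\mathcal{E}^{(1)}_0 + \mathcal{E}^{(2)}_0 \le \mathcal{E}^{(1)}_0 + (\lambda\alpha)^{-1}\mathcal{E}^{(2)}_0$, which holds because $\lambda \le (16K_{1,k})^{-1}$ together with $\alpha \le L_{f,M_{\star}}^{-1}$ forces $\lambda\alpha \le 1$; this is harmless since the final error bound turns out to be independent of $\lambda$.

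The crux is to replace the per-step factor $\tau_l\omega_l$ by the cleaner quantity $\tfrac{1}{16}\min\{\kappa_{f,M_l}^{-4}/9,\ \tau_l/2\}$, that is, to establish $\tau_l\omega_l \ge \tfrac{1}{16}\min\{\kappa_{f,M_l}^{-4}/9,\ \tau_l/2\}$, so that $1-\tau_l\omega_l \le 1-\tfrac{1}{16}\min\{\cdots\}$ and the product may be enlarged factorwise. Recalling $\omega_l = \min\{\alpha\mu_{f,M_l}\kappa_{f,M_l}^{-1}/4,\ 1/32\}$, I split into two regimes. In the step-size-limited regime $\omega_l = 1/32$ one simply has $\tau_l\omega_l = \tau_l/32 = \tfrac{1}{16}\cdot\tfrac{\tau_l}{2}$. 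In the curvature-limited regime $\omega_l = \alpha\mu_{f,M_l}/(4\kappa_{f,M_l})$ I invoke the step-size constraint $\tau_l\alpha \le (36\kappa_{f,M_l}^2 L_{f,M_l})^{-1}$ from \eqref{thm_lyapunov_disc_eq011}: evaluating $\tau_l$ at this admissible value and using $L_{f,M_l}=\kappa_{f,M_l}\mu_{f,M_l}$ gives $\tau_l\omega_l = \tfrac{\mu_{f,M_l}}{4\kappa_{f,M_l}}\,\tau_l\alpha = \tfrac{1}{144}\kappa_{f,M_l}^{-4} = \tfrac{1}{16}\cdot\tfrac{\kappa_{f,M_l}^{-4}}{9}$, precisely the first entry of the minimum. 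Combining both regimes yields the lower bound and hence \eqref{thm_str_cont_lya2_eq01_disc}.

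I expect this second step to be the main obstacle, because the constant $\kappa^{-4}/9$ is dictated exactly by saturating $\tau_l\alpha = (36\kappa_{f,M_l}^2 L_{f,M_l})^{-1}$, so the factorwise comparison is tight: one must track the admissible range of $\tau_l$ in \eqref{thm_lyapunov_disc_eq011} carefully and verify that the two regimes of $\omega_l$ are matched by the two entries of the minimum (the $\tau_l/2$ entry absorbing the step-size-limited case, the $\kappa^{-4}/9$ entry the curvature-limited one). The surrounding arithmetic — that $\tau_k\omega_k\le 1$ keeps the factors nonnegative and that the minimum selects the correct branch in each regime — is then routine.

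Finally, for the assertion under $u_0\in\ker(B)$, I would note that $Bu_0=0$ gives $\widetilde{P}_{\mM_{\star}}u_0 = u_0$, and since $u^{\star}\in\ker(B)$ likewise gives $\widetilde{P}_{\mM_{\star}}u^{\star} = u^{\star}$, one obtains the identity $(I-\widetilde{P}_{\mM_{\star}})(u_0-u^{\star}_{\phi}) = (I-\widetilde{P}_{\mM_{\star}})(u^{\star}-u^{\star}_{\phi})$. Hence $\mathcal{E}^{(2)}_0 = \tfrac12\|(I-\widetilde{P}_{\mM_{\star}})(u^{\star}-u^{\star}_{\phi})\|^2_{M_{\star}}$, which \eqref{lem_u_diff_eq00} of Lemma \ref{lem_u_diff} bounds by $2\alpha^2(\delta^{\star})^2\|\nabla f(u^{\star})\|^2_{M_{\star}^{-1}}$. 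Dividing by $\lambda\alpha$ and substituting the upper bound on $\delta^{\star}$ (equivalently on $p$) from \eqref{thm_lyapunov_disc_eq01}, with $C^{\star}=\mu_{f,M_{\star}}^{1/2}\|\nabla f(u^{\star})\|_{M_{\star}^{-1}}$, produces the stated $\lambda$-free constant, exactly paralleling the continuous computation in the proof of Theorem \ref{thm_str_cont_lya2}.
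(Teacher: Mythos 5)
Your proposal is correct and takes essentially the same route as the paper: iterate the discrete strong Lyapunov property of Theorem \ref{thm_lyapunov_disc}, match the two branches of $\tau_k\omega_k$ to the two entries of the minimum (the paper's one-line observation $\tau_k\alpha\,\mu_{f,M_k}\kappa^{-1}_{f,M_k}/4\le(12\kappa^{2}_{f,M_k})^{-2}$, drawn from \eqref{thm_lyapunov_disc_eq011}, is exactly your curvature-limited computation with $L_{f,M_k}=\kappa_{f,M_k}\mu_{f,M_k}$), and handle the $u_0\in\ker(B)$ case via $(I-\widetilde{P}_{\mM_{\star}})(u_0-u^{\star}_{\phi})=(I-\widetilde{P}_{\mM_{\star}})(u^{\star}-u^{\star}_{\phi})$ and \eqref{lem_u_diff_eq00}, just as the paper does by pointing back to Theorem \ref{thm_str_cont_lya2}. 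Your explicit caveat that the $\kappa^{-4}_{f,M_l}/9$ entry is available only when $\tau_l\alpha$ saturates the bound $(36\kappa^{2}_{f,M_l}L_{f,M_l})^{-1}$ is in fact more candid than the paper's own proof, which passes from that upper bound to the stated "optimal" rate without comment.
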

\begin{proof}
Notice that
$
\tau_k \alpha  \frac{  \mu_{f,M_k} \kappa^{-1}_{f,M_k}}{4}
\le ( 12 \kappa^2_{f,M_k} )^{-2}
$ from the upper bound of $\tau_k\alpha$ in \eqref{thm_lyapunov_disc_eq011}.
This leads to \eqref{thm_str_cont_lya2_eq01_disc}.
The estimate for $ \mathcal{E}^{(2)}_0$ is similar to \eqref{thm_str_cont_lya2_eq02}.
\end{proof}

\begin{remark}
\label{rem_tau}
The condition \eqref{thm_lyapunov_disc_eq011} indicates that $\tau_k$ can be selected as $1$ 
to recover the standard IPPGD method in \eqref{PPGD0} given sufficiently-small $\delta_k$ and $\alpha$.
However, when computing the inexact projections, it is usually not easy to control the smallness of $\delta_k$.
Then, it would be more desirable to use smaller $\tau_k$.
In fact, our numerical results also suggest that smaller $\tau_k$ can significantly improve the convergence speed in some cases.
\end{remark}
\begin{remark}
\label{const_explic}
All those intermediate constants in Assumption \ref{asump_M_disc} such as $\theta_m$, $K_{\theta}$ and $K_S$, 
as well as the convexity and Lipschitz constants, 
all explicitly appear in Theorems \ref{thm_lyapunov_disc} and \ref{thm_str_cont_lya2_disc},
such that one can explicitly see how they affect the inexactness level and step sizes.
For example, $\theta_m$ measures how far $\mM_0$ is different from $\mM_{\star}$.
The conditions in \eqref{thm_lyapunov_disc_eq01} and \eqref{thm_lyapunov_disc_eq011} show that
for larger $\theta_m$ we should use smaller inexactness level and step sizes.
\end{remark}

\begin{remark}
  \label{rem_conv}
The convergence rate in Theorem \ref{thm_str_cont_lya2_disc} only depends on $\tau_k$ and $\kappa_{f,M_k}$.
For the standard IPPGD method in \eqref{PPGD0} where $\tau_k=1$, the convergence rate is determined by $\kappa_{f,M_k}$ alone.
Notice that $\kappa_{f,M_k} \le (1+\theta_m)^2 \kappa_{f,M_{\star}}$.
Hence, if $\kappa_{f,M_{\star}}$ is independent of the discretized system size, we can conclude that the convergence rate also has this property.
This can be verified by numerical results in the following section for solving nonlinear PDEs.
\end{remark}

\section{Numerical Experiments}
\label{sec:num}

\RGI{
In this section, we apply the IPPGD methods to two nonlinear optimization problems.
\subsection{A resource-allocation problem}
The first is a separable nonlinear resource-allocation problem \cite{1998MuralidharanHanan,2015PatrikssonStrmberg}
\begin{equation}
\label{eq:resource-allocation}
    \min_{\bfx\in \mathbb{R}^n} f(x)
    :=
    \sum_{i=1}^n \phi_i(x_i)
    \qquad
    \text{subject to}\qquad
    \mathbf{1}^\top \bfx = R,
\end{equation}
where
\begin{equation}
\label{eq:phi}
    \phi_i(x_i)
    =
    \log\bigl(1+\exp(a_i x_i+c_i)\bigr)
    +
    \frac{\gamma_i}{2}(x_i-r_i)^2,
    \qquad
    \gamma_i>0.
\end{equation}
Here $x_i$ is the amount of resource assigned to activity $i$, and $R$ is the total available resource.

Let
$
\sigma(t)=\frac{1}{1+e^{-t}}.
$
Compute the derivatives of each scalar component:
\begin{align*}
    \phi_i'(x_i)
    =
    a_i\sigma(a_i x_i+c_i)+\gamma_i(x_i-r_i), 
    ~~~~
    \phi_i''(x_i)
    =
    a_i^2\sigma(a_i x_i+c_i)\bigl(1-\sigma(a_i x_i+c_i)\bigr)+\gamma_i.
\end{align*}
Since
$
    0\leq \sigma(t)(1-\sigma(t))\leq \frac14,
$
we have the uniform scalar bounds
\begin{equation*}
\label{eq:scalar-curvature-bound}
    \gamma_i
    \leq
    \phi_i''(x_i)
    \leq
    \gamma_i+\frac{a_i^2}{4}.
\end{equation*}
Hence, the convexity and Lipschitz constants can be defined as
\begin{equation}
\label{eq:mu-L-euclidean}
    \mu=\min_{1\leq i\leq n}\gamma_i,
    \qquad
    L=\max_{1\leq i\leq n}\left(\gamma_i+\frac{a_i^2}{4}\right).
\end{equation}
 The associated condition number is $\kappa_f = L/\mu$.

Consider a diagonal preconditioner:
\begin{equation}
\label{eq:fixed-diagonal-M}
    M=\text{diag}(m_1,\ldots,m_n),
    \qquad m_i>0,
\end{equation}
Then, the strong-convexity and Lipschitz constants in the $M$-metric are
\begin{equation*}
\label{eq:relative-constants-general-M}
    \mu_{f,M}
    =
    \min_i \frac{\gamma_i}{m_i},
    \qquad
    L_{f,M}
    =
    \max_i \frac{\gamma_i+a_i^2/4}{m_i}.
\end{equation*}
The condition number can be computed accordingly.
For the diagonal metric \eqref{eq:fixed-diagonal-M}, the Schur complement becomes a simple scalar
\begin{equation}
\label{eq:schur-diagonal}
    S=\sum_{i=1}^n\frac{1}{m_i}.
\end{equation}

We now consider the construction of the diagonal preconditioners.
A particularly simple one is the componentwise scaling preconditioner:
\begin{equation}
\label{eq:natural-diagonal-preconditioner}
    M=\text{diag}( \{ \gamma_i+\frac{a_i^2}{4} \}_{i=1,\cdots,n} ).
\end{equation}
For this choice,
\begin{equation}
\label{eq:natural-preconditioner-constants}
    L_{f,M}=1,
    \qquad
    \mu_{f,M}=\min_i \gamma_i(\gamma_i+\frac{a_i^2}{4})^{-1},
    \qquad
    \kappa_{f,M}
    =
    \max_i\left(1+\frac{a_i^2}{4\gamma_i}\right).
\end{equation}
Such a simple fixed scaling preconditioner normalizes the largest possible coordinate curvature.

We may also consider a variable preconditioner given by the regularized Hessian diagonal:
\begin{equation}
\label{eq_diag_hessian}
    M(\bfx)=\text{diag}(m_1(x_1),\ldots,m_n(x_n)),
\end{equation}
where
    $m_i(s)
    = \phi''_i(s) + a^2_i\epsilon =
    \gamma_i
    +
    a_i^2\left( \sigma(a_is+c_i)
    \bigl(1-\sigma(a_is+c_i)\bigr) + \epsilon \right)$.
Let $M_\star:=M(\bfx_{\star})=\text{diag}(m_{\star,1},\ldots,m_{\star,n})$. Then, noticing 
$
m_i(s)-m_{\star,i} = \left( m_i(s)/m_{\star,i} - 1 \right)m_{\star,i}
$,
we obtain
\begin{equation}
\label{eq:H1-M-part}
    M(\bfx)-M_\star\preceq \Theta(\bfx)M_\star,
    \qquad
    M_\star-M(\bfx)\preceq \Theta(\bfx)M(\bfx),
\end{equation}
where
\begin{equation}
\label{eq:Theta-definition}
    \Theta(\bfx)
    :=
    \max_{1\leq i\leq n}
    \left\{
    \frac{m_i(x_i)}{m_{\star,i}}-1,
    \frac{m_{\star,i}}{m_i(x_i)}-1
    \right\}. 
\end{equation}
In addition, the scalar Schur complement can be computed exactly, i.e., $\delta = 0$ and $\widetilde{S}=S$.
This already gives Assumption \hyperref[asump_M_h1]{\textbf{(H1)}},
and Assumptions \hyperref[asump_M_h2]{\textbf{(H2)}} and \hyperref[asump_M_h3]{\textbf{(H3)}} trivially hold.

Next, noticing
$
    \frac{d}{dt}\{\sigma(t)(1-\sigma(t))\}
    =
    \sigma(t)(1-\sigma(t))(1-2\sigma(t)),
$
we get
\begin{equation}
\label{eq:mi-prime}
    m_i'(s)
    =
    a_i^3\sigma(\xi_i(s))(1-\sigma(\xi_i(s)))(1-2\sigma(\xi_i(s))),
    \qquad
    \xi_i(s):=a_is+c_i.
\end{equation}
The sharp scalar bound
    $
    \left|\sigma(t)(1-\sigma(t))(1-2\sigma(t))\right|
    \leq
    \frac{1}{6\sqrt 3}
    $
gives
\begin{equation}
\label{eq:LM}
    |m_i'(s)|\leq \frac{|a_i|^3}{6\sqrt 3},
    \qquad
    K_m:=\max_i\frac{|a_i|^3}{6\sqrt 3}.
\end{equation}
This Lipschitz constant leads to 
$
|m_i(x_i) -m_{\star,i}| \le K_m |x_i-x_{\star,i}|.
$
Therefore, we obtain from \eqref{eq:Theta-definition} that
$$
\Theta(\bfx) \le \frac{K_m}{\mu} \max_{1\le i \le n} |x_i-x_{\star,i}| \le \frac{K_m}{\mu} \| \bfx - \bfx_{\star} \|_2
$$
In addition, since $f$ is $\mu$-strongly convex, there holds
$
    D_f(\bfx,\bfx_{\star})
    \geq
    \frac{\mu}{2}\|\bfx - \bfx_{\star}\|_2^2.
$
Assumption \hyperref[asump_M_h3]{\textbf{(H4)}} immediately holds with $K_{\theta} = \sqrt{2}K_m/\mu^{3/2}$.

Next, we apply the IPPGD method to this problem and present the numerical results.
We let $\left[ a_1,a_2,\cdots,a_n \right] = (0.1+[1/n,2/n,\cdots,1])\times10^{\kappa}$, 
$\left[ \gamma_1,\gamma_2,\cdots,\gamma_n \right] = (0.1+[1/n,2/n,\cdots,1])$,
and $\left[ c_1,c_2,\cdots,c_n \right] = 0.5\left[\sin(2\pi/n), \sin(4\pi/n), \cdots, \sin(2\pi) \right]$.
Additionally, we choose $r_i$ and make $R=0$ such that the minimizer is shifted to $\mathbf{ 0}$.
We select $\kappa=1,2,3$ to adjust the condition numbers.
We consider the aforementioned three types of preconditioners: the identity $M_I$, the scaling one $M_s$ in \eqref{eq:natural-diagonal-preconditioner} and the diagonal Hessian $M_H$ in \eqref{eq_diag_hessian} with $\epsilon = 10^{-3}$. Notice that the last one is a variable preconditioner. 
The initial guess is generated from a standard normal distribution.
Using the relaxed IPPGD method in \eqref{PPGD1_disc} with $\tau_k = 0.8$,
we present all the iteration numbers in Table \ref{tab:resource_allocation_iterations}.
Clearly, the IPPGD method with $M_H$ is highly stable with respect to condition numbers and problem size compared with the other two cases. In addition, it admits the smallest iteration numbers in most challenging situations.
However, we also mention that for the relatively easy situation, the scaling preconditioner $M_s$ is good enough for acceleration.
}

\begin{table}[t]
\centering
\RGI{
\caption{Iteration numbers for the centered resource-allocation problem.}
\label{tab:resource_allocation_iterations}
\begin{threeparttable}
\setlength{\tabcolsep}{5pt}
\renewcommand{\arraystretch}{1.15}
\begin{tabular}{    c    r    r    r    r}
\toprule

\multirow{2}{*}{$\kappa_f$}
&
\multirow{2}{*}{$n$}
&
\multicolumn{3}{c}{IPPGD outer iterations} \\
\cmidrule(lr){3-5}
&
&
\makecell{$M=I$\\ Identity}
&
\makecell{$M=M_{s}$\\ Scaling}
&
\makecell{$M=M_{H}(x_k)$\\ Diagonal Hessian}
\\
\midrule

\multirow{3}{*}{$\approx 310$}
&
$10^3$ & $1526$ & $43$ & $391$ \\
&
$10^4$ & $1708$ & $50$ & $420$ \\
&
$10^5$ & $1854$ & $51$ & $448$ \\
\midrule

\multirow{3}{*}{$\approx 30000$}
&
$10^3$ & $3841$ & $417$ & $331$ \\
&
$10^4$ & $4755$ & $586$ & $359$ \\
&
$10^5$ & $6238$ & $598$ & $387$ \\
\midrule

\multirow{3}{*}{$\approx 3\times 10^6$}
&
$10^3$ & $29116$ & $4129$ & $466$ \\
&
$10^4$ & $39688$ & $5996$ & $600$ \\
&
$10^5$ & $52834$ & $6171$ & $602$ \\

\bottomrule
\end{tabular}
\end{threeparttable}
}
\end{table}


\subsection{Nonlinear elliptic PDEs}
\label{subsex_exam}

Next, we demonstrate that the IPPGD method \eqref{PPGD0} and the modified method in \eqref{PPGD1_disc} can benefit the numerical solution of nonlinear PDEs.

\subsubsection{Problem set-up}
\label{subsex_exam_subs1}
Here we focus on one type of quasilinear elliptic equations \cite{2016Han,1977Scheurer} whose diffusion coefficient depends on the gradients nonlinearly.
It can also be applied to other nonlinear PDEs \cite{2020XuYouseptZou,2018HuangChenRui}.
On a domain $\Omega \subseteq \mathbb{R}^3$,
we aim to find $u\in H^1_0(\Omega)$ such that
\begin{equation}
\label{proj_exam_1_eq1}
\nabla\cdot( \nu(|\nabla u|) \nabla u) = g, ~~~~ \text{in} ~ \Omega, ~~~~~~ u = g_D ~~~ \text{on} ~ \partial \Omega.
\end{equation}
Here, the function $\nu:  \mathbb{R}^+_0 \rightarrow \mathbb{R}^+$ is a continuous function satisfying the following properties \cite{2020XuYouseptZou}.
\begin{itemize}

\item[{(A1)}]  \label{asump_A1}
$\nu$ is continuously differentiable on $\mathbb{R}^+_0$.

\item[(A2)] \label{asump_A2}
There exist positive constants $\nu_m$ and $\nu_M$ such that $\nu_M \ge \nu(t) \ge \nu_m$, $\forall t \ge 0$, and $\lim_{t\rightarrow \infty} \nu(t) = \nu_M$.

\item[(A3)] \label{asump_A3}
$\tilde{\nu}(s):= \nu(s)s$ is invertible on $ \mathbb{R}^+_0$ and is Lipschitz continuous with a Lipschitz constant $\bar{\nu}_M> \nu_M$.
Additionally, $\tilde{\nu}$ is strongly monotone with a monotonicity constant $\nu_m$, i.e.,
\begin{equation}
\label{proj_exam_1_eq01}
(\tilde{\nu}(s) - \tilde{\nu}(t))(s-t) \ge \nu_m |s-t|^2, ~~~~~ \forall s,t \ge 0.
\end{equation}


\end{itemize}

One popular approach for solving \eqref{proj_exam_1_eq1} is to recast it in a mixed formulation \cite{2013BoffiBrezziFortin,1994ChenZhangxin}.
It can also be equivalently written as a constrained optimization problem.
Let us introduce the new variable $\bfsigma = \nu(|\nabla u|) \nabla u$, which yields $\nabla u = \frac{\bfsigma}{\nu(\tilde{\nu}^{-1}(|\bfsigma|))} $. 
Define the function $\Psi(t) = \int^t_0 \tilde{\nu}^{-1}(s) \dd s$,
the Hilbert spaces: $V(\Omega) = \bfH(\text{div};\Omega)$, 
and $W(\Omega)=L^2(\Omega)$.
Then, \eqref{proj_exam_1_eq1} can be formulated as solving the following optimization problem
\begin{equation}
\label{proj_exam_1_eq2}
\min_{\bfsigma \in V(\Omega)} f(\bfsigma):= \int_{\Omega} \Psi(|\bfsigma|) \dd x - \int_{\partial \Omega} g_D \bfsigma\cdot\bfn \dd s, 
~~~~~ \text{subject to} ~ (\text{div}\bfsigma, w) = (g,w), ~~~ \forall w \in W(\Omega).
\end{equation} 
Then, the calculus of variation gives the gradient:
\begin{equation}
\label{proj_exam_1_eq3}
\langle \nabla f(\bfsigma), \bfv \rangle = \int_{\Omega}  \frac{\bfsigma \cdot \bfv}{\nu(\tilde{\nu}^{-1}(|\bfsigma|))}  \dd x
 - \int_{\partial\Omega} g_D \bfv\cdot \bfn \dd s, 
~~~~~ \forall \bfv \in V.
\end{equation}
Certainly, \eqref{proj_exam_1_eq3} implies $\nabla f$ is a nonlinear function on $\bfsigma$.
Using the properties (A1)-(A3) of $\nu$ outlined above, 
we can show that the Lipschitz continuity and convexity properties of the energy functional: 
\begin{subequations}
\label{proj_exam_1_eq5}
\begin{align}
    &   \langle \nabla f(\bfsigma_{1}) - \nabla f(\bfsigma_{2}), \bfv \rangle \le   2 \nu_m^{-1} \| \bfv \|_{L^2(\Omega)} \| \bfsigma_{1} - \bfsigma_{2} \|_{L^2(\Omega)}, \label{proj_exam_1_eq51}   \\
    &    \langle \nabla f(\bfsigma_{1}) - \nabla f(\bfsigma_{2}), \bfsigma_{1} - \bfsigma_{2} \rangle \ge (\bar{\nu}_M)^{-1} \| \bfsigma_{1} - \bfsigma_{2} \|_{L^2(\Omega)}^2 . \label{proj_exam_1_eq52}
\end{align}
\end{subequations}
The proof is standard and given in Appendix \ref{append_nonlin_ell} for completeness.
However, the resulting nonlinear saddle point system 
or the constraint optimization problem from these formulations can be difficult to solve.
Here, we shall apply the proposed IPPGD method.


\subsubsection{Discretization and inexactness}
\label{subsex_exam_subs2}
As for discretization,
we let $V_h$ be the Raviart-Thomas space and $W_h$ be the piecewise constant finite element space.
Now, let us denote by $\bfv_{h,i}$ the basis functions of the discrete space $V_h$
and by $\mathcal{N}(V_h)$ the number of degrees of freedom of $V_h$.
\RG{
Then, the discretized problem is to solve
\begin{equation}
\begin{split}
\label{disc_prob_eq1}
&\min_{\bfsigma_h \in V_h} f(\bfsigma_h):= \int_{\Omega} \Psi(|\bfsigma_h|) \dd x - \int_{\partial \Omega} g_D \bfsigma_h\cdot\bfn \dd s, \\
~~~~ \text{subject to} & ~  (\text{div}\bfsigma_h, w_h) = (g,w_h), ~~ \forall w_h \in W_h(\Omega).
\end{split}
\end{equation}
From now on, denote $\bar{\bfv}_h$ as the vector representation of each FE function $\bfv_h$ in $V_h$.
From \eqref{proj_exam_1_eq3}, the discretized gradient of $f$ can be expressed in terms of a weighted mass matrix capturing the nonlinear coefficient $ \left[ \nu(\tilde{\nu}^{-1}(|\bfsigma_h|)) \right]^{-1}$:
\begin{equation}
\label{proj_exam_1_eq4}
A(\bfsigma_h) = \left[ \int_{\Omega} \left[ \nu(\tilde{\nu}^{-1}(|\bfsigma_h|)) \right]^{-1} \bfv_{h,i}\cdot \bfv_{h,j} \dd x \right]_{i,j=1}^{\mathcal{N}(V_h)},
~~~ \text{and} ~~~
\langle \nabla f(\bfsigma_h), \bar{\bfv}_h \rangle = \langle A(\bfsigma_h)\bar{\bfsigma}_h, \bar{\bfv}_h \rangle.
\end{equation}
For simplicity, we also let $A$ be the usual mass matrix.
Then, it is not hard to see that the constraint in \eqref{proj_exam_1_eq2} becomes
\begin{equation}
\label{eq_constr_B}
B \bar{\bfsigma}_h = \bar{g}_h, ~~~ \text{with} ~~ B = GA,
\end{equation}
where $G$ is the matrix representation of the gradient operator and $\bar{g}_h$ is a certain FE function approximation to $g$.
Then, with a selected preconditioner $M(\bfsigma_h)$, 
the exact Schur complement is 
$$
S(\bfsigma_h) = GA \left( M(\bfsigma_h) \right)^{-1} AG^T
$$
that is close to a matrix representation of a negative Laplacian with variable coefficients.
Thus, to compute the exact projection, one needs to invert these matrices, 
which is quite expensive especially for large-scale problems.
Following the strategy in Section \ref{subsec:ipp_operator},
we introduce an operator $\widetilde{S}(\bfsigma_h)$ approximating $S(\bfsigma_h)$,
such that its inverse $\widetilde{S}^{-1}$ is much easier to compute. 
To show how this inexactness approach works in the present problem, we discuss the following two cases.
\begin{itemize}
\item[Case 1.] Let $M(\bfsigma_h):=\text{diag}(A(\bfsigma_h))$ be the diagonal of the matrix $A(\bfsigma_h)$. Such a choice will make the computation of $M^{-1}$ trivial. Then, the exact Schur complement $S=GA M^{-1}(\bfsigma_h) AG^T$ can also be computed trivially. As it is close to a negative Laplacian,
a multigrid (MG) method can be used to construct $\widetilde{S}^{-1}$; 
namely, $\widetilde{S}^{-1}\bfb$ is defined as the approximated solution to the linear system $S\bfx = \bfb$ by MG with only a few iterations.
More specifically, denote $\mathcal{G}(S(\bfsigma_h),n_{mg})$ as the MG approximation to $S(\bfsigma_h)$ with $n_{mg}$ inner W-cycle iterations and define $\widetilde{S}^{-1}=\mathcal{G}(S(\bfsigma_h),n_{mg})$.
\item[Case 2.] Let $M(\bfsigma_h)$ be a conjugate gradient (CG) approximation of $A(\bfsigma_h)$; 
namely, $(M(\bfsigma_h))^{-1}\bfb$ is defined as the approximated solution to the linear system $A(\bfsigma_h)\bfx = \bfb$ by CG with only a few iterations.
In this case, the inverse of the exact Schur complement, i.e., $S^{-1}$, is very expensive to compute, 
and thus we can use the Schur complement with its inverse in Case 1 as the approximations.
Notice that, even though one can solve the MG iteration precisely, the resulting projection is still inexact,
as the Schur complement itself involves approximation.
\end{itemize}
Meanwhile, in both cases, $M(\bfsigma_h)$ can also be viewed as approximations to $A(\bfsigma_h)$,
introducing the inexactness into both $M^{-1}$ and $\widetilde{S}^{-1}$,
where Case 1 provides a direct approximation method by using the diagonal,
and Case 2 offers an iterative approximation approach.
This observation highlights the broad applicability and flexibility of the proposed framework,
as all the inexactness can be simply subsumed into the Schur complement.

For the present example, our numerical experiments suggest that Case 1 is faster than Case 2,
as the latter one involves more complicated solvers.
Hence, we shall only present the numerical results of Case 1 below.
In computation, we let
$
\widetilde{S}^{-1}_k =  \mathcal{G}(S(\bfsigma_{h,k}),n^{(k)}_{mg}),
$
where $n^{(k)}_{mg}$ can vary in outer iterations to adjust the inexactness
and $\bfsigma_{h,k}$ is the solution obtained at the current step.
Moreover, we can verify such choices of $M$ and $\widetilde{S}$ satisfy Assumption \ref{asump_M_disc} with the involved constants to be estimated explicitly.
As the proof is a little lengthy, we put it in Appendix \ref{append_assump_verify}.
}



The case of linear equations (linear elliptic PDEs) has been well-studied in the literature,
where MG can achieve the iteration number (complexity) independent of the mesh size.
But the nonlinear case is still not well understood.
The proposed algorithm coupled with MG can obtain the mesh-independent convergence rates.

The condition number $\kappa_f$ (measured relative to the $L_2$ metric) is $2\nu_0/\nu_m$ independent of mesh size.
But these problems are essentially ill-conditioned due to the differential operators in the constraint. 
After discretization with a mesh size $h$, the Schur complement in the projection computation will have a condition number $O(h^{-2})$,
being ill-conditioned especially when the mesh size is small.
Based on Remark \ref{rem_conv}, it is not hard to see that the outer iteration number should be independent of the discretization mesh size,
demonstrated by the numerical results below.

To show the effectiveness of the proposed method, 
we consider three types of methods: 
the classical exact PGD method and the IPPGD method with a fixed metric (denoted by PG and IPPGD in Table \ref{table:num_v1}),
and the IPPGD method with variable metric (IPPGDv).
For these three types of methods, we simply fix $\tau_k = 1$ which leads to original algorithm \eqref{PPGD0},
where the difference is just the inexactness and preconditioning metric.
In addition, we also consider the case $\tau_k<1$ corresponding to the new algorithm \eqref{PPGD1_disc},
referred to as IPPGDv-$\tau$.
Theorem \ref{thm_lyapunov_disc} tells us that large $\alpha_k$ can be compensated by small $\tau_k$.
With this strategy, we can achieve faster convergence.


\subsubsection{Numerical results}

Now, let us consider the following specific coefficient function and the true solution:
$$
\nu(s) = a_0 + a_1 e^{-a_2 s} ~~~~~ \text{and} ~~~~~ u(x_1,x_2,x_3) = \sin(x_1)\sin(x_2)\sin(x_3),
$$
where the Dirichlet boundary condition and the source term are computed accordingly.
For such a nonlinear scenario, $\tilde{\nu}(s) = \nu(s)s$ is indeed invertible, 
but there is no analytical form of this inverse function. 
Thus, we shall compute the inverse numerically.
\RGI{In particular}, we will first use a bisection method to locate the value $t$ such that $\tilde{\nu}^{-1}(t)\approx s$ 
and then use a Newton's method to compute the more accurate values.
In addition, we shall consider the two cases: 
\begin{align*}
 (a_0,a_1,a_2)=(1,1,5): ~  \nu_0= 2,~~~ \nu_m \approx 0.86, ~~~ \text{and} ~~~  (a_0,a_1,a_2)=(1,6,5): ~ \nu_0= 7,~~~ \nu_m \approx 0.18.
\end{align*}
The first case has better condition number than the second one.
\RGI{In the computation, we \RGI{set the stopping criterion to be} $\| \bar{\bfsigma}_{h,k+1} - \bar{\bfsigma}_{h,k} \|_{l_2}/\sqrt{\mathcal{N}(V_h)} \le \epsilon$,
where the subscripts $k$ and $k+1$ represent the vector solutions at these steps.
Notice that the scaling of $\sqrt{\mathcal{N}(V_h)}$ is due to the total DoFs of the solutions $\bar{\bfsigma}_{h,k+1}$ and $\bar{\bfsigma}_{h,k}$,
resulting in an average per-degree-of-freedom change.
}
We present the numerical results in Table \ref{table:num_v1}.

\RG{
For Cases 1 and 2, we set $\rho$ to be $10^{-6}$ and $10^{-7}$, respectively, as the first case is relatively easier.
In addition, selecting the inner iteration $n_{mg}\approx 13$ 
(regardless of mesh size and the coefficients) 
is sufficient to reduce the residual error to be less than $10^{-8}$,
corresponding to the exact projection,
where the results are reported in the first column ``P'' of each case in Table \ref{table:num_v1}.
As for the original IPPGD methods, we test fixed and variable metrics, for which the results are reported in the second and third columns of each case, denoted by ``IP'' and ``IPv'', respectively.
Moreover, we also consider the IPPGD-$\tau$ method and report the results in the last two columns.
To impose inexactness, we shall consider two strategies: (1) a dynamic approach that starts $n_{mg}$ at a low value 
(e.g., $n_{mg}=1$) and gradually increases it during the optimization process, 
and (2) a simple approach that fixes $n_{mg}$ at $n_{mg}=6$.
Cases 1 and 2 increase $n_{mg}$ every 6 and 60 iterations and reach a maximum of 5 and 6 by the final iterations, respectively.
These two strategies are reported in the last two columns of each case in Table \ref{table:num_v1},
denoted by ``IPv-$\tau$'' and ``IPv-$\tau$f'', respectively.}

Clearly, using variable preconditioning and projection metric can significantly reduce the number of outer iterations.
This is reasonable, as a fixed metric may not adequately capture the behavior of the nonlinear mass matrix.
In addition, the inexactness can largely reduce the number of inner iterations,
which is illustrated by the block of average Wcycles in Table \ref{table:num_v1}.
We also highlight that the projection at the final stage is still inexact, 
where the error is appropriately tailored according to the mesh size,
which can make the IPPGD method even faster.
\RG{Notice that the outer iteration of the column ``IPv-$\tau$'' slightly decreases at the beginning and eventually reaches stable values on fine meshes.
We expect that this phenomenon is due to the gradually-increasing $n_{mg}$,
since the fixed $n_{mg}$ can stabilize the outer iteration, which is indicated by the last column ``IPv-$\tau$f'' of each case. 
However, the fixed $n_{mg}$ slightly increases the overall computational cost.
Moreover, to examine the growth rate of the computational time with respect to the number of DoFs, i.e., the rate $r$ in the relation $T \approx CT_0(\frac{\# DoF}{\# DoF_0})^r$,
where $T_0$ denotes the computational time of the initial mesh with the initial $\# DoF_0$.
Our results indicate that the computational time grows linearly with the number of DoFs, i.e., $r \approx 1$, as expected.}
Overall, the numerical results clearly show that IPPGDv-$\tau$ $>$ IPPGDv $>$ IPPGD $>$ PGD, 
where ``$>$" means faster. 
These findings highlight that variable metrics and inexactness mechanism collectively accelerate the convergence.

\vspace{-0.1in}

\begin{table}[htp]
	\centering
\caption{Comparison of various algorithms. 
 The number of iterations represents the outer iterations,
 and the number of Wcycles represents the average inner iterations of the whole process (Wcycle is the inner iteration for MG) per outer iteration. 
 Below each method, we indicate the corresponding $(\alpha,\tau)$ used in computation. }
\label{table:num_v1}
	\renewcommand{\arraystretch}{1.25}
  \tiny{
	\begin{tabular}{@{} c c | c c c c c | c  c c  c c @{}}
	\toprule
	\hline
 &  & \multicolumn{5}{c|}{$\kappa_f \approx 2.3129$}  
      & \multicolumn{5}{c}{$ \kappa_f \approx 37.2340$}\\ \cline{3-12}
                                            & DoFs       &      \makecell{ P \\  $(0.7,1)$}   & \makecell{ IP \\ $(0.7,1)$}   & \makecell{ IPv \\ $(0.7,1)$} &   \makecell{ \textbf{IPv-$\tau$} \\ $(1.3,0.5)$ }  &    \makecell{ \textbf{IPv-$\tau$f} \\ $(1.3,0.5)$ }  &   \makecell{P \\ $(0.1,1)$ }      &  \makecell{IP \\ $(0.1,1)$ }    &  \makecell{IPv\\ $(0.1,1)$}  &   \makecell{\textbf{IPv-$\tau$}\\ $(0.6,0.2)$}   &   \makecell{\textbf{IPv-$\tau$f} \\ $(0.6,0.2)$ }   
\\  
 \hline 
\multirow{4}{*}{Iteration}                 & 50688         &     43     &     34       &   27   &      {26}       &   24  &    579       &    579      &  221   &    {190}    &    160     \\  
                                           & 399360        &    41      &     31       &   27   &     {23}     &   24    &    466       &    466       &  212  &    {189}   &    162       \\  
                                           & 3170304       &    38      &     30       &   27   &     {21}     &   22    &    364       &    364     &  228  &   {177}    &     165     \\ 
                                           & 10672128   &  --      &     --     &    27  &       {22}     &   22    &    --       &    --     &  230       &   {169}    &     167     \\  
                                           & 25264128   &  --      &     --     &    27  &       {22}     &    22   &    --       &    --     &  232       &   {169}    &     168     \\  \hline
\multirow{4}{*}{\makecell{Ave. Wcycles\\
           (per out. ite.)}  }                & 50688           &  10.9       &     4.2      &   3.7  &    {3.7}      &    5     &    11.0       &    5.0     &  3.4    &    {3.1}       &   6   \\  
                                          & 399360          &   11.9      &     4.0      &   3.7  &   {3.4}       &     5    &    12.0       &    4.7     &  3.3   &  {3.0}     &    6       \\  
                                           & 3170304        &    12.9     &     4.0      &   3.7  &   {3.3}       &     5    &    13.0        &   4.0     &  3.4   &   {3.0}    &   6     \\ 
                                            & 10672128   &  --      &     --     &    3.7  &       {3.4}     &   5    &    --       &    --     &  3.4       &   {2.9}    &     5     \\  
                                            & 25264128   &  --      &     --     &    3.7  &       {3.4}     &    5    &    --       &    --     &  3.4       &   {2.9}    &     5     \\  \hline
\multirow{4}{*}{\makecell{ CPU time \\    (seconds)   }}        
                                           & 50688           &    11         &    6.5      &   5     &    {4.8}      &  4.2  &    136        &     77   &  34   &    {30}      &    28.4        \\  
                                           & 399360          &    67         &    29       &   25   &   {20}       &   25    &    756        &    448       & 218   &    {193}         &     177      \\ 
                                           & 3170304         &    444        &    212      &   185 &   {138}   &  167       &    4212       &    2526        & 2082   &   {1130}    &    1339     \\  
                                            & 10672128   &  --      &     --     &    665  &       {522}     &    589   &    --       &    --     &  6055       &   {4295}    &     4444     \\  
                                            & 25264128   &  --      &     --     &    1795  &       {1448}    &   1568    &    --       &    --     &  15991       &   {10657}    &     11826     \\   \hline
\multicolumn{2}{c|}{Growth rate of CPU time} & -- & -- & 0.94 & 0.91 & 0.94   & -- & -- & 0.99 & 0.93 & 0.96 \\
   \hline\bottomrule
\end{tabular} 
  }
\end{table}

\section{Conclusion}
We have introduced a specialized ODE model designed to capture the dynamics of 
{inexact preconditioned projected gradient descent (IPPGD) methods}, demonstrating a particular efficacy. 
Discretization of this ODE not only recovers the original IPPGD method \eqref{PPGD0} but also yields a faster alternative.
A delicate and novel Lyapunov function is designed to address the complexities of inexactness and variable preconditioning metrics, 
{ensuring} independence from the variable metric and effectively managing deviations from the constraint set. 
\RGI{The strong Lyapunov property} is rigorously proved at both the continuous and discrete levels under this general framework. 
Furthermore, our theoretical and numerical analyses reveal that 
IPPGD outperforms PGD, IPPGDv and IPPGDv-$\tau$ outperform IPPGD.
\RGI{The current analysis is restricted to affine constraint sets. An important direction for future research is to extend the theory to nonlinear constraints or more general manifolds \cite{1981Botsaris,2021GaoSonAbsilStykel,2023Boumal,2020HuLiuWenYuan,2009ShikhmanStein}. 
However, we emphasize that projections onto manifolds introduce significant analytical challenges.}


\appendix
\section{Continuity and convexity of the nonlinear elliptic equation (see Section\,\ref{subsex_exam_subs1})}
\label{append_nonlin_ell}
It is not hard to show $\tilde{\nu}^{-1}$ has the Lipschitz constant $\nu_m^{-1}$ and the monotonicity constant $\nu_M^{-1}$.
Notice that $\tilde{\nu}(0) = 0$. 
Then, we can conclude $\nu_m t \le \tilde{\nu}(t) \le \nu_M t$ and $\nu_m^{-1}t \le \tilde{\nu}^{-1}(t) \le \nu_m^{-1}t$.
We first show the energy in \eqref{proj_exam_1_eq2} has Lipschitz continuous derivative.
Using \eqref{proj_exam_1_eq3}, we have
\begin{equation}
\begin{split}
\label{append_nonlin_ell_eq1}
 \langle \nabla f(\bfsigma_{1}), \bfv \rangle - \langle \nabla f(\bfsigma_{2}), \bfv \rangle
 = &  \int_{\Omega}  \frac{\bfsigma_{1} \cdot \bfv}{ \nu(\tilde{\nu}^{-1}(|\bfsigma_{1}|)) }  \dd x -  \int_{\Omega}  \frac{\bfsigma_{2} \cdot \bfv}{\nu(\tilde{\nu}^{-1}(|\bfsigma_{2}|))}  \dd x
 =: \int_{\Omega}  L \cdot \bfv \dd x.
 \end{split}
\end{equation}
We begin with the case that neither $\bfsigma_{1}$ or $\bfsigma_{2}$ is 0.
Applying $\tilde{\nu}^{-1}(t)\le \nu^{-1}_1 t$, we can write down
\begin{equation}
\begin{split}
\label{append_nonlin_ell_eq2}
 L 
  =&  \left( \tilde{\nu}^{-1}(|\bfsigma_{1}|)  - \tilde{\nu}^{-1}(|\bfsigma_{2}|)  \right)  \frac{\bfsigma_{1}}{|\bfsigma_{1}|} 
  + \tilde{\nu}^{-1}(|\bfsigma_{2}|)  \left(  \frac{\bfsigma_{1}}{|\bfsigma_{1}|}  -   \frac{\bfsigma_{2}}{|\bfsigma_{2}|}  \right) \\
  \le & \nu_m^{-1} | \bfsigma_{1} - \bfsigma_{2} |\frac{\bfsigma_{1}}{|\bfsigma_{1}|} +  \nu_m^{-1} |\bfsigma_{2}| \frac{|\bfsigma_{1} - \bfsigma_{2}|}{|\bfsigma_{2}|},
 \end{split}
\end{equation}
which trivially implies $|L| \le 2 \nu_m^{-1} | \bfsigma_{1} - \bfsigma_{2} |$.
Next, we consider the case that one of $\bfsigma_{1}$ or $\bfsigma_{2}$ is $0$, say $\bfsigma_{2}$, without loss of generality.
As $\tilde{\nu}(0) = 0$, we know $\nu(\tilde{\nu}^{-1}(0)) = \nu(0)$. 
 Combining these estimates, we obtain \eqref{proj_exam_1_eq51}.

As for the convexity, noticing $\frac{ \bfsigma_{1} \cdot \bfsigma_{2}}{|\bfsigma_{1}|} \le |\bfsigma_{2}|$ and 
 $\frac{ \bfsigma_{2} \cdot \bfsigma_{1}}{|\bfsigma_{2}|} \le |\bfsigma_{1}|$, we have
 \begin{equation*}
 \begin{split}
\label{append_nonlin_ell_eq4}
&L\cdot ( \bfsigma_{1}  - \bfsigma_{2}  ) = (\bar{\nu}_M)^{-1}|  \bfsigma_{1}  - \bfsigma_{2} |^2 \\ 
& + ( \tilde{\nu}^{-1}(|\bfsigma_{1}|) - (\bar{\nu}_M)^{-1}|\bfsigma_{1}| ) \frac{|\bfsigma_{1}|^2 - \bfsigma_{1} \cdot \bfsigma_{2}}{ |\bfsigma_{1}| } \\
& + ( \tilde{\nu}^{-1}(|\bfsigma_{2}|) - (\bar{\nu}_M)^{-1}|\bfsigma_{2}| ) \frac{|\bfsigma_{2}|^2 - \bfsigma_{1} \cdot \bfsigma_{2}}{ |\bfsigma_{2}| }  \\
 & ~~~ \ge (\bar{\nu}_M)^{-1}|  \bfsigma_{1}  - \bfsigma_{2} |^2 
 + ( \tilde{\nu}^{-1}(|\bfsigma_{1}|) - \tilde{\nu}^{-1}(|\bfsigma_{2}|) ) ( |\bfsigma_{1}| - |\bfsigma_{2}| ) \\
 & - (\bar{\nu}_M)^{-1}( |\bfsigma_{1}| - |\bfsigma_{2}| )^2  \ge  (\bar{\nu}_M)^{-1}|  \bfsigma_{1}  - \bfsigma_{2} |^2 
 \end{split}
\end{equation*}
where the last inequality holds due to the monotonicity property of $\tilde{\nu}^{-1}$.
Hence, \eqref{proj_exam_1_eq52} is obtained.


\section{Proof of Lemma \ref{lem_Pi_est}}
\label{append_lem_Pi_est}
From \eqref{proj_Pi_eq1}, we have $\| \widetilde{P}_{\mM} u \|^2_M = \langle u,  \widetilde{P}^T_{\mM}M \widetilde{P}_{\mM}  u \rangle = \langle u, M \widetilde{P}_{\mM}^2 u \rangle$ 
and $\| P_M u \|^2_M = \langle u,  P^T_M M P_M  u \rangle = \langle u, M P^2_M u \rangle$.
We then write down
\begin{equation}
\label{lem_Pi_est_eq1}
\begin{split}
M \widetilde{P}_{\mM}  &  = M - B^T\widetilde{S}^{-1}B , \\
M \widetilde{P}_{\mM}^2 
& = M - B^T (2 \widetilde{S}^{-1} - \widetilde{S}^{-1} S \widetilde{S}^{-1}) B, \\
 MP^2_M & =  M P_M  = M - B^T{S}^{-1}B.
\end{split}
\end{equation}
Note that \eqref{lem_Pi_est_eq011} is trivial.
We first show \eqref{lem_Pi_est_eq01}.
Notice that $S^{-1} - (2 \widetilde{S}^{-1} - \widetilde{S}^{-1}S \widetilde{S}^{-1}) = (S^{-1} - \widetilde{S}^{-1})S(S^{-1} - \widetilde{S}^{-1})\succcurlyeq 0$.
Hence, we obtain $S^{-1} \succcurlyeq 2 \widetilde{S}^{-1} - \widetilde{S}^{-1}S \widetilde{S}^{-1} $, which then yields \eqref{lem_Pi_est_eq01}.
In addition, \eqref{lem_Pi_est_eq02} follows from
$(2 \widetilde{S}^{-1} - \widetilde{S}^{-1}S \widetilde{S}^{-1}) - \widetilde{S}^{-1} 
= \widetilde{S}^{-1}( \widetilde{S} - S ) \widetilde{S}^{-1} \succcurlyeq 0$ 
due to $\widetilde{S} \succcurlyeq S$. 

\RG{Next, still using \eqref{lem_Pi_est_eq1} and $(1-\epsilon)S^{-1} \preccurlyeq  \widetilde{S}^{-1} \preccurlyeq  {S}^{-1}$ from the assumption, we obtain
$$
(I - \widetilde{P}_{\mM} )^TM(I - \widetilde{P}_{\mM} ) = B^T\widetilde{S}^{-1}S  \widetilde{S}^{-1}B 
\succcurlyeq (1-\epsilon) B^T \widetilde{S}^{-1} B \succcurlyeq  (1-\epsilon)^2 (I - P_M)^T M (I - P_M),
$$}
which leads to the first inequality in \eqref{lem_Pi_est_eq03}.
The second one follows from a similar argument.
As for \eqref{lem_Pi_est_eq04}, 
Lemma \ref{lem_SPD_general} implies
\begin{equation*}
\begin{split}
( \widetilde{P}_{\mM} - P_M)M^{-1} (\widetilde{P}_{\mM} - P_M)^T & = M^{-1}B^T (\widetilde{S}^{-1} - {S}^{-1}) S (\widetilde{S}^{-1} - {S}^{-1}) B M^{-1} \\
&  \preccurlyeq \epsilon^2 M^{-1}B^T S^{-1} BM^{-1} \preccurlyeq  \epsilon^2 M^{-1},
\end{split}
\end{equation*}
where in the last inequality we have used $M^{-1} \succcurlyeq M^{-1}B^T S^{-1} BM^{-1} $ that is standard for exact projections.

As for \eqref{lem_Dproj_new_eq0}, the direct computation yields
\begin{equation}
\begin{split}
\label{lem_Dproj_new_eq1}
& \widetilde{P}^T_{\mM_1} ( I - \widetilde{P}^T_{\mM_2} )  M_2 ( I - \widetilde{P}_{\mM_2} ) \widetilde{P}_{\mM_1} \\
= & (I - B^T \widetilde{S}^{-1}_1B M^{-1}_1) B^T \widetilde{S}^{-1}_2 B M^{-1}_2 B^T \widetilde{S}^{-1}_2 B  (I - M^{-1}_1 B^T \widetilde{S}^{-1}_1B ) \\
= & B^T  (I -  \widetilde{S}^{-1}_1 S_1) \widetilde{S}^{-1}_2 S_2 \widetilde{S}^{-1}_2 (I - S_1 \widetilde{S}^{-1}_1) B.
 \end{split}
\end{equation}
Using the assumption on $\mM_1$ and $\mM_2$, we have $\widetilde{S}^{-1}_2 S_2 \widetilde{S}^{-1}_2\preccurlyeq \widetilde{S}^{-1}_2 \preccurlyeq  {S}^{-1}_2 \preccurlyeq c{S}^{-1}_1$.
Putting this inequality into \eqref{lem_Dproj_new_eq1} and using Lemma \ref{lem_SPD_general},
we obtain
\begin{equation}
\label{lem_Dproj_new_eq2}
\begin{split}
 \widetilde{P}^T_{\mM_1} ( I - \widetilde{P}^T_{\mM_2} )  M_2 ( I - \widetilde{P}_{\mM_2} ) \widetilde{P}_{\mM_1} \preccurlyeq c B^T  (I -  \widetilde{S}^{-1}_1 S_1)  S^{-1}_1 (I - S_1 \widetilde{S}^{-1}_1) B  \preccurlyeq c \RG{\epsilon^2_1} B^T {S}^{-1}_1 B, \\
 \end{split}
\end{equation}
which yields the desired result in \eqref{lem_Dproj_new_eq0}.
Hence, \eqref{lem_Dproj_new_eq01} follows from \eqref{lem_Dproj_new_eq2} and
$$
(I - \widetilde{P}_{\mM_1})^T M_1 (I - \widetilde{P}_{\mM_1}) = B^T \widetilde{S}^{-1}_1 S_1  \widetilde{S}^{-1}_1 B 
\succcurlyeq (1-\epsilon_1) B^T \widetilde{S}^{-1}_1 B \succcurlyeq (1-\epsilon_1)^2 B^T {S}^{-1}_1 B.
$$


\RG{
\section{Verification of the assumptions on metrics and inexactness (cf.\,Section \ref{assum_metric})}
\label{append_assump_verify}
We now verify the assumptions (H1)-(H4) for the nonlinear elliptic example 
in Section\,\ref{subsex_exam_subs1}.
Since the analysis of preconditioners is highly problem-dependent, 
here we only verify \hyperref[asump_M_h2_new]{\textbf{(H3)}} with a Bramble-Pasciak-Xu (BPX) MG preconditioner \cite{2017XuZikatanov}.

As the proof is applicable to both the continuous and discrete cases,
we shall not distinguish them during our discussion for simplicity. 

From \hyperref[asump_A3]{(A3)}, we immediately obtain 
\begin{equation}
\label{append_assump_verify_eq0}
|\tilde{\nu}'(t)| \in [\nu_m, \bar{\nu}_M]. 
\end{equation}
As $\nu'(s)=(\tilde{\nu}'-\nu(s))/s$,
we conclude from \eqref{append_assump_verify_eq0} and \hyperref[asump_A2]{(A2)} that $\lim_{s\rightarrow \infty} \nu'(s) = 0$.
Combining it with \hyperref[asump_A1]{(A1)}, we obtain
\begin{equation}
\label{append_assump_verify_eq01}
|\nu'(s)| \le \hat{\nu}_M ~~~ \forall s\in [0,\infty],
\end{equation}
for some non-negative constant $\hat{\nu}_M$.

Now, let us first verify \hyperref[asump_M_h1]{\textbf{(H1)}} with the constant satisfying \hyperref[asump_M_h3]{\textbf{(H4)}}.
Moreover, to facilitate presentation, we let $N_{\mathcal{T}}$ be the number of elements $\{T\}$ in the triangulation.
Let $M_{\star}=\text{diag}(A(\bfsigma^{\star}_{h,\phi}))$ where $\bfsigma^{\star}_{h,\phi}$ is the fixed point given by Lemma \ref{lem_fixpt}.
For a general $\bfsigma_h$ defined as $\bfsigma_h|_{T} = \bfsigma_T$, $\forall T\in\mathcal{T}_h$, the corresponding metric can be expressed as a diagonal matrix
\begin{equation}
\label{append_assump_verify_eq1}
M(\bfsigma_h) = \left[ (\nu(\tilde{\nu}^{-1}(|\bfsigma_{T_i}|)))^{-1} |T_i| \right]_{i=1}^{N_{\mathcal{T}}}.
\end{equation}
It is not hard to see that the $i$-th element of the diagonal matrix $M(\bfsigma_h)$ is 
$$
m_i(\bfsigma_h) := |T_i|(\nu(\tilde{\nu}^{-1}(|\bfsigma_{T_i}|)))^{-1}.
$$
Noticing that $|(1/\nu)'|=|\nu'/\nu^2| \le \nu_m^{-2} \hat{\nu}_M$ and that $\tilde{\nu}^{-1}$ has the Lipschitz constant $\nu_m^{-1}$,
we have the following estimation:
\begin{equation}
\begin{split}
\label{append_assump_verify_eq4}
& | m_i(\bfsigma_h)  - m_i(\bfsigma^{\star}_{h,\phi})  |  \le |T_i| \nu_m^{-2} \hat{\nu}_M | \tilde{\nu}^{-1}(|\bfsigma_{T_i}|) - \tilde{\nu}^{-1}(|\bfsigma^{\star}_{T_i}|) | \\
 \le & \nu_m^{-3}  \hat{\nu}_M |T_i| | \bfsigma_{T_i} - \bfsigma^{\star}_{T_i} | 
 \le \nu_m^{-3}  \hat{\nu}_M \nu_M | \bfsigma_{T_i} - \bfsigma^{\star}_{T_i} | \underbrace{ |T_i|  (\nu(\tilde{\nu}^{-1}(|\bfsigma^{\star}_{T_i}|)))^{-1} }_{=m_i(\bfsigma^{\star}_{h,\phi})}.
\end{split}
\end{equation}
Then, for any vector $\bfx\in \mathbb{R}^{N_{\mathcal{T}}}$, we obtain
\begin{equation}
\begin{split}
\label{append_assump_verify_eq5}
\bfx^T(M(\bfsigma_h) - M_{\star})\bfx & \le \sum_{i=1}^{N_{\mathcal{T}}}  x^2_i  | m_i(\bfsigma_h)  - m_i(\bfsigma^{\star}_{h,\phi})  | \\
& \le  \sum_{i=1}^{N_{\mathcal{T}}}  x^2_i  \nu_m^{-3}  \hat{\nu}_M \nu_M | \bfsigma_{T_i} - \bfsigma^{\star}_{T_i} | m_i(\bfsigma^{\star}_{h,\phi}) \\
& \le \max_i\{  | \bfsigma_{T_i} - \bfsigma^{\star}_{T_i} | \}  \nu_m^{-3} \hat{\nu}_M \nu_M \bfx^T M_{\star} \bfx
\end{split}
\end{equation}
which implies \eqref{append_assump_verify_eq2} with $\tilde{K}_{\theta} = \nu_m^{-3}  \hat{\nu}_M \nu_M$.
Hence, using \eqref{proj_exam_1_eq52}, we have
\begin{equation}
\label{append_assump_verify_eq2}
M(\bfsigma_h) - M_{\star} \preccurlyeq 
\underbrace{ \sqrt{2} \frac{ \| \bfsigma_h - \bfsigma^{\star}_{h,\phi} \|_{L^\infty(\Omega)} }{ \| \bfsigma_h - \bfsigma^{\star}_{h,\phi} \|_{L^2(\Omega)} } 
\nu_m^{-3} (\bar{\nu}_M)^{1/2} \hat{\nu}_M \nu_M }_{=K_{\theta}}
\underbrace{ (2\bar{\nu}_M)^{-1/2} \| \bfsigma_h - \bfsigma^{\star}_{h,\phi} \|_{L^2(\Omega)} }_{\le \sqrt{D_f(\bfsigma_h, \bfsigma^{\star}_{h,\phi})}} M_{\star}.
\end{equation}
Denote $\gamma = \frac{ \| \bfsigma_h - \bfsigma^{\star}_{h,\phi} \|_{L^\infty(\Omega)} }{ \| \bfsigma_h - \bfsigma^{\star}_{h,\phi} \|_{L^2(\Omega)} }$.
In $K_{\theta}$, all the constants other than $\gamma$ are completely independent of discretization.
Regarding $\gamma$, we may simply write down
$
|\Omega|^{-1/2} \le \gamma \le c_1 h^{-2}
$, 
for a mesh regularity constant $c_1$, which implies
\begin{equation}
\label{append_assump_verify_eq6}
|\Omega|^{-1/2} \le \frac{K_{\theta}}{\sqrt{2}\nu_m^{-3} (\bar{\nu}_M)^{1/2} \hat{\nu}_M \nu_M} \le c_1 h^{-2}.
\end{equation}
This leads to \eqref{asump_M_eq21} in \hyperref[asump_M_h1]{\textbf{(H1)}} with \hyperref[asump_M_h3]{\textbf{(H4)}}.
Here, we mention that if the constant $\gamma$ behaves ``well enough'', indeed observed in our numerical experiments,
then $K_{\theta}$ is almost $O(1)$ such that the step size can be large.

Next, for \eqref{asump_M_eq22}, using \eqref{asump_M_eq21}, we have $- \Theta(t) S \preccurlyeq S - S_{\star} = B(M^{-1}-M^{-1}_{\star})B^T \preccurlyeq \Theta(t) S_{\star}$, which gives
\begin{equation}
\label{append_assump_verify_eq7}
- \frac{\Theta(t)}{1+\Theta(t)} S^{-1}_{\star} \preccurlyeq S^{-1} - S^{-1}_{\star} \preccurlyeq \Theta(t) S^{-1}_{\star}.
\end{equation} 
In addition, \hyperref[asump_M_h2_new]{\textbf{(H3)}} immediately implies
$$
-K_S \Theta_m(t) \delta_{\max} S^{-1}_{\star} + (S^{-1} - S^{-1}_{\star} ) \preccurlyeq 
\widetilde{S}^{-1} - \widetilde{S}^{-1}_{\star} 
\preccurlyeq K_S \Theta_m(t) \delta_{\max} S^{-1}_{\star} + (S^{-1} - S^{-1}_{\star} ).
$$ 
Then, \eqref{asump_M_eq22} follows from the inequality above and \eqref{append_assump_verify_eq7}.
Hence, it remains to verify Assumption \hyperref[asump_M_h2_new]{\textbf{(H3)}} below.}

\RGI{
Here, \hyperref[asump_M_h2_new]{\textbf{(H3)}} is more difficult, and we shall prove it based on a specific BPX MG construction \cite{2017XuZikatanov} of $\widetilde{S}$ with a fixed number of Richardson iterations.
Meanwhile, we will also prove Assumption \hyperref[asump_M_h2]{\textbf{(H2)}} under this setting.
To this end, we connect $M_\star$ and $M$ by the affine path
\begin{equation*}
        M_s := M_\star+s(M-M_\star), \qquad s\in[0,1],
\end{equation*}
and define
\begin{equation*}
        S_s := B M_s^{-1}B^T .
\end{equation*}
Thus $S_0=S_\star$ and $S_1=S$.

To construct the MG preconditioner and the associated approximation to $S_s$,
we introduce $I_i$ as a natural inclusion operator from the $i$-th level space into the finest space.  
Define the level matrices associated with $S_s$
and the corresponding damped Jacobi smoothers on each level:
\begin{equation}
\label{mg_level_i}
        S_{i,s}:=I_i^T S_s I_i ,
        ~~~~~
        D_{i,s}:=\text{diag}(S_{i,s}),
        ~~~~~
         R_{i,s}:=\omega_i D_{i,s}^{-1},
        \qquad \omega_i>0 .
\end{equation}
Define the additive BPX-type MG preconditioner
\begin{equation}
\label{Bs_def}
        \mathcal{B}_s := \sum_{i=1}^L I_i R_{i,s} I_i^T
        =\sum_{i=1}^L \omega_i I_i D_{i,s}^{-1}I_i^T,
\end{equation}
where $L$ is the number of layers for meshes.
The matrix $\mathcal{B}_s$ itself presents an approximate inverse of $S_s$,
which we will prove in Lemma \ref{lem_Bs_equiv} below.

A single $\mathcal{B}_s$ may not be enough, and thus we will use it to construct $\widetilde{S}^{-1}_s$ through an iterative solver.
For any vector \(b\), essentially we aim to approximate $S^{-1}_sb$.
We apply a damped preconditioned Richardson iteration with the multigrid
preconditioner \(\mathcal B_s\):
\[
        x^{k+1}
        =
        x^k+\tau \mathcal B_s(b-S_sx^k).
\]
Equivalently, it can be written as
$
        x^{k+1}
        =
        (I-\tau \mathcal B_sS_s)x^k+\tau \mathcal B_s b
$.
Define the one-step error operator
\begin{equation}
\label{Ss_eq1}
        T_s:=I-\tau\mathcal B_sS_s .
\end{equation}     
Then the iteration can be written as
$
        x^{k+1}=T_sx^k+\tau\mathcal B_s b
$.
Starting from \(x^0=0\), after \(n_{\rm mg}\) iterations,
we obtain
\[
        x^{n_{\rm mg}}
        =
        \sum_{j=0}^{n_{\rm mg}-1}
        T_s^j \tau\mathcal B_s b .
\]
Since the map \(b\mapsto x^{n_{\rm mg}}\) is linear, the corresponding
approximate inverse can be defined as
\begin{equation}
\label{Ss_eq2}
        \widetilde S_s^{-1}
        :=
        \sum_{j=0}^{n_{\rm mg}-1}
        T_s^j \tau\mathcal B_s,
\end{equation}
which is precisely the operator produced by
applying \(n_{\rm mg}\) damped MG-preconditioned Richardson
iteration steps. 
In particular, there hold $\widetilde S_{\star} = \widetilde S_{0}$ and $\widetilde S = \widetilde S_{1}$.
Moreover, it is easy to see that $\widetilde S_s^{-1}$ is symmetric;
in fact, it is also positive definite, see \eqref{thm_verifyH2_eq2} below.

Now, we define the error operator
\begin{equation}
\label{eq_Es}
        E_s := \widetilde S_s^{-1}-S_s^{-1}
\end{equation}
From now on, we use the notation $\dot{A}$ to denote the derivative of a matrix $A$ with respect to $s$.
Assumption \hyperref[asump_M_h3]{\textbf{(H3)}} is to estimate $E_1 - E_0$, and the fundamental idea is to employ the identity
\begin{equation}
\label{eq_E1E0}
E_1 - E_0 = \int_0^1 \dot{E_s} \dd s.
\end{equation}
Thus, it remains to estimate $\dot{E_s}$.
Let us first derive a useful form for $E_s$.
Notice that \eqref{Ss_eq1} can be rewritten into
$
       (I-T_s)S_s^{-1}
        =
        \tau\mathcal B_s
$.
Substituting it into \eqref{Ss_eq2}, we get
$
        \widetilde S_s^{-1}
        =
        \sum_{j=0}^{n_{\rm mg}-1}
        T_s^j(I-T_s)S_s^{-1}
$.
Since \(I-T_s\) is a polynomial in \(T_s\), it commutes with \(T_s\)
and thus
$
        \sum_{j=0}^{n_{\rm mg}-1}T_s^j(I-T_s)
        =
        I-T_s^{n_{\rm mg}}
$.
Therefore, we have
\begin{equation}
\label{eq2_E1E0}
        \widetilde S_s^{-1}
        =
        (I-T_s^{n_{\rm mg}})S_s^{-1}
\end{equation}
Consequently, the inverse error has the exact representation
\begin{equation}
\label{eq3_E1E0}
E_s = 
        \widetilde S_s^{-1}-S_s^{-1}
        =
        -T_s^{n_{\rm mg}}S_s^{-1}.
\end{equation}

The next stage is to study the approximation of $\mathcal{B}_s$ to $S_s$. 
For $s=0$, it immediately follows from the spectral equivalence in standard MG theory \cite{2017XuZikatanov}:
\begin{equation}
\label{lem_Bs_equiv_star}
        a_{\star} S_{\star}^{-1} \preceq \mathcal{B}_{\star} \preceq b_{\star} S_{\star}^{-1}.
\end{equation}
It is the usual MG property needed to guarantee mesh-independent convergence.
With \eqref{asump_M_eq2} in Assumption \hyperref[asump_M_h1]{\textbf{(H1)}}, the following spectral-equivalence estimate holds for such a class of BPX-type MG preconditioners $\mathcal{B}_s$.

\begin{lemma}
\label{lem_Bs_equiv}
Under \eqref{asump_M_eq21} in Assumption \hyperref[asump_M_h1]{\textbf{(H1)}}, there exist constants $0<a\le b<\infty$, independent of $s$ and of the mesh size, such that
\begin{equation}
\label{lem_Bs_equiv_eq1}
        a S_s^{-1} \preceq \mathcal{B}_s \preceq b S_s^{-1},
        \qquad \forall s\in[0,1],
\end{equation}
where $a=a_{\star}(1+\theta)^{-2}$ and $b = b_{\star}(1+\theta)^{2}$.
\end{lemma}
\begin{proof}
For simplicity, we let $C_{\theta} = (1+\theta)$. 
By \eqref{asump_M_eq2}, we have
$
        C_{\theta}^{-1}M_\star
        \preceq
        M
        \preceq
        C_{\theta} M_\star .
$
Since $M_s=(1-s)M_\star+sM$ and $s\in[0,1]$, it follows that
\begin{equation}
\label{lem_Bs_equiv_eq3}
        C_{\theta}^{-1}M_\star
        \preceq
        M_s
        \preceq
        C_{\theta} M_\star ,
        ~~~~
        C_{\theta}^{-1}M
        \preceq
        M_s
        \preceq
        C_{\theta} M.
\end{equation}
Taking inverses and multiplying by $B$ and $B^T$ gives
$
        C_\theta^{-1}S_\star
        \preceq
        S_s
        \preceq
        C_\theta S_\star ,
$
and hence
\begin{equation}
\label{lem_Bs_equiv_eq2}
        C_\theta^{-1}S_\star^{-1}
        \preceq
        S_s^{-1}
        \preceq
        C_\theta S_\star^{-1}.
\end{equation}
Multiplying by $I$ and $I^T$ gives
the estimate on each level $i$:
\[
        C_\theta^{-1}S_{i,\star}
        \preceq
        S_{i,s}
        \preceq
        C_\theta S_{i,\star}.
\]
As diagonals preserve the Lowener order for diagonal matrices, we obtain
$
        C_\theta^{-1}D_{i,\star}
        \preceq
        D_{i,s}
        \preceq
        C_\theta D_{i,\star}.
$
Taking inverses gives
$
        C_\theta^{-1}D_{i,\star}^{-1}
        \preceq
        D_{i,s}^{-1}
        \preceq
        C_\theta D_{i,\star}^{-1}.
$
Multiplying by $\omega_iI_i$ and $I_i^T$ and summing over $i$ leads to
\[
        C_\theta^{-1}\mathcal B_\star
        \preceq
        \mathcal B_s
        \preceq
        C_\theta \mathcal B_\star .
\]
Using \eqref{lem_Bs_equiv_star} and \eqref{lem_Bs_equiv_eq2}, we obtain
\[
        \mathcal B_s
        \succeq
        C_\theta^{-1}a_\star S_\star^{-1}
        \succeq
        \frac{a_\star}{C_\theta^2}S_s^{-1},
~~~~
\text{and}
~~~~
        \mathcal B_s
        \preceq
        C_\theta b_\star S_\star^{-1}
        \preceq
        b_\star C_\theta^2 S_s^{-1}.
\]
This proves the desired estimate.
\end{proof}


Next, we shall prove that $\widetilde S$ defined in \eqref{Ss_eq2} can indeed fulfill Assumptions \hyperref[asump_M_h2]{\textbf{(H2)}} and \hyperref[asump_M_h3]{\textbf{(H3)}}.
For simplicity, we introduce the following matrix norm for an arbitrary matrix $A$ with a given SPD matrix $B$:
\[
        \|A\|_{B}
        :=\left\|B^{1/2}A B^{-1/2}\right\|_2 .
\]
We shall frequently use the following inequality:
\begin{equation}
\label{AB_ineq}
\| AC \|_B = \| B^{1/2} AC B^{-1/2} \|_2 = \| B^{1/2} A B^{-1/2} B^{1/2} C B^{-1/2} \|_2 \le \|A\|_B \|C\|_B.
\end{equation}

\begin{theorem}
\label{thm_verifyH2}
Under \eqref{asump_M_eq21} in Assumption \hyperref[asump_M_h1]{\textbf{(H1)}} and \eqref{lem_Bs_equiv_eq1},
Assumption \hyperref[asump_M_h2]{\textbf{(H2)}} holds with $\delta = \rho^{n_{\text{mg}}}$ and $\rho = 1 - a\tau < 1$
for sufficiently small $\tau \le 1/b$.
\end{theorem}
\begin{proof}
Lemma \ref{lem_Bs_equiv} immediately implies $\lambda(S_s^{1/2}\mathcal{B}_sS_s^{1/2}) \in [a,b]$.  
Then, noticing the identity
$
        S_s^{1/2}T_sS_s^{-1/2}
        = I-\tau S_s^{1/2}\mathcal{B}_sS_s^{1/2}
$,
we have
\begin{equation}
\label{thm_verifyH2_eq1}
      \lambda(S_s^{1/2}T_sS_s^{-1/2}) \in [0,\rho].
\end{equation}
Using the identity \eqref{eq2_E1E0}, we write down
\[
        S_s^{1/2}\widetilde S_s^{-1}S_s^{1/2}
        =I- (S_s^{1/2}T_sS_s^{-1/2})^{n_{\rm mg}}.
\]
Using \eqref{thm_verifyH2_eq1} we have $\lambda(S_s^{1/2}\widetilde S_s^{-1}S_s^{1/2}) \in [1-\rho^{n_{\rm mg}},1]$. 
which is equivalent to
\begin{equation}
\label{thm_verifyH2_eq2}
        (1-\delta)\widetilde S_s\preceq S_s\preceq \widetilde S_s.
\end{equation}
At $s=0,1$, this is precisely Assumption \hyperref[asump_M_h2]{\textbf{(H2)}}.  The proof is complete.
\end{proof}


\begin{theorem}
\label{thm_verifyH3}
Under \eqref{asump_M_eq21} in Assumption \hyperref[asump_M_h1]{\textbf{(H1)}} and \eqref{lem_Bs_equiv_eq1},
Assumption \hyperref[asump_M_h3]{\textbf{(H3)}} holds
with the explicit constant
\begin{equation}
\label{thm_verifyH3_eq0}
        K_S
        =
        \left(1+\frac{2b \tau n_{\rm mg}}{\rho}\right)(1+\theta)^2.
\end{equation}
\end{theorem}

\begin{proof}
Recall from \eqref{eq3_E1E0} that
$
 E_s=-T_s^{n_{\rm mg}}S_s^{-1}
$.
In addition, we trivially have
$T_s^{n_{\rm mg}}S_s^{-1}=S_s^{-1/2} (S_s^{1/2}T_sS_s^{-1/2})^{n_{\rm mg}}S_s^{-1/2}$ is symmetric.  Hence $E_s$ and $\dot E_s$ are symmetric.
Thus
\begin{equation*}
\begin{split}
\label{thm_verifyH3_eq7}
        \dot E_s
        =-\frac{\dd}{\dd s}\left(T_s^{n_{\rm mg}}\right)S_s^{-1}
        +T_s^{n_{\rm mg}}S_s^{-1}\dot S_sS_s^{-1}.
 \end{split}
\end{equation*}
Noticing $\frac{\dd}{\dd s}\left(T_s^{n_{\rm mg}}\right)
        =\sum_{j=0}^{n_{\rm mg}-1}
        T_s^j\dot T_sT_s^{n_{\rm mg}-1-j}$,
we use \eqref{AB_ineq} to obtain
\begin{align}
\label{thm_verifyH3_eq8}
        \left\|S_s^{1/2}\dot E_sS_s^{1/2}\right\|_2
        &\le
       \underbrace{ \sum_{j=0}^{n_{\rm mg}-1}
        \|T_s^j\|_{S_s}\,\|\dot T_s\|_{S_s}\,
        \|T_s^{n_{\rm mg}-1-j}\|_{S_s}}_{(I)}
        + \underbrace{ \|T_s^{n_{\rm mg}}\|_{S_s}
        \left\|S_s^{-1/2}\dot S_sS_s^{-1/2}\right\|_2}_{(II)}.
\end{align}
We divide its estimation into several steps.

\textbf{Step 1}. Show
\begin{equation}
\label{thm_verifyH3_eq1}
        \left\|S_s^{-1/2}\dot S_s S_s^{-1/2}\right\|_2 \le \Gamma ,
        ~~~~
        \Gamma:=(1+\theta)\Theta .
\end{equation}
To see this, we first notice
$
        \dot M_s : = M-M_\star
$.
Then, \eqref{asump_M_eq21} yields
$\dot M_s \preceq \Theta M_\star$
and
$-\dot M_s = M_\star-M \preceq \Theta M$.
Combining these estimates with \eqref{lem_Bs_equiv_eq3}, we obtain
\begin{equation}
\label{thm_verifyH3_eq2}
        -\Gamma M_s \preceq \dot M_s \preceq \Gamma M_s,
        \qquad
        \Gamma:=(1+\theta)\Theta .
\end{equation}
Since $S_s=BM_s^{-1}B^T$,
we have
$
        \dot S_s
        = -B M_s^{-1}\dot M_s M_s^{-1}B^T
$.
Applying \eqref{thm_verifyH3_eq2} to this identity, we obtain
\begin{equation}
\label{thm_verifyH3_eq3}
        -\Gamma S_s \preceq \dot S_s \preceq \Gamma S_s
\end{equation}
which is \eqref{thm_verifyH3_eq1}.

\textbf{Step 2}.
Show
\begin{equation}
\label{thm_verifyH3_eq4}
        \left\|S_s^{1/2}\dot{\mathcal{B}}_sS_s^{1/2}\right\|_2
        \le b\Gamma .
\end{equation}
Restricting \eqref{thm_verifyH3_eq3} to level $i$ through \eqref{mg_level_i} gives
\begin{equation}
\label{thm_verifyH3_eq5}
        -\Gamma S_{i,s}\preceq \dot S_{i,s}\preceq \Gamma S_{i,s}
~~~~
        -\Gamma D_{i,s}\preceq \dot D_{i,s}\preceq \Gamma D_{i,s}.
\end{equation}
Since $R_{i,s}=\omega_iD_{i,s}^{-1}$,
we have
$        \dot R_{i,s}
        = -\omega_iD_{i,s}^{-1}\dot D_{i,s}D_{i,s}^{-1}
$.
Thus, we obtain
$ -\Gamma R_{i,s}\preceq \dot R_{i,s}\preceq \Gamma R_{i,s}$.
Summing over levels through \eqref{Bs_def} yields
$
        -\Gamma \mathcal{B}_s\preceq \dot{\mathcal{B}}_s\preceq \Gamma \mathcal{B}_s
$.
Combining it with the upper bound in Lemma \ref{lem_Bs_equiv} yields \eqref{thm_verifyH3_eq4}.

\textbf{Step 3}.
Show
\begin{equation}
\label{thm_verifyH3_eq9}
\| \dot{T_s} \|_{S_s} \le 2b\tau \Gamma.
\end{equation}
Differentiating $T_s=I-\tau\mathcal{B}_sS_s$ gives
$\dot T_s=-\tau\dot{\mathcal{B}}_sS_s-\tau\mathcal{B}_s\dot S_s$,
which then leads to
\begin{align*}
        \|\dot T_s\|_{S_s}
        &\le
        \tau\left\|S_s^{1/2}\dot{\mathcal{B}}_sS_s^{1/2}\right\|_2
        +\tau\left\|S_s^{1/2}\mathcal{B}_sS_s^{1/2}\right\|_2
        \left\|S_s^{-1/2}\dot S_sS_s^{-1/2}\right\|_2.
\end{align*}
Using \eqref{thm_verifyH3_eq1} and \eqref{thm_verifyH3_eq4}, we obtain the desired estimate \eqref{thm_verifyH3_eq9}.

\textbf{Step 4}. Show the estimate of $\dot{E_s}$
\begin{equation}
\label{thm_verifyH3_eq10}
        -K_S\delta_{\max}\Theta S_\star^{-1}
        \preceq \dot E_s
        \preceq
        K_S\delta_{\max}\Theta S_\star^{-1},
\end{equation}
where $K_S$ is given by \eqref{thm_verifyH3_eq0}. 

At last, using \eqref{thm_verifyH2_eq1}, we have $\|T_s\|_{S_s} \le \rho$ and thus conclude from \eqref{thm_verifyH3_eq8} that
\begin{align*}
        \left\|S_s^{1/2}\dot E_sS_s^{1/2}\right\|_2
        &\le
        2b\tau n_{\rm mg}\rho^{n_{\rm mg}-1}\Gamma
        +\rho^{n_{\rm mg}}\Gamma  
        =
        \left(1+\frac{2b \tau n_{\rm mg}}{\rho}\right)\delta \Gamma .
\end{align*}
Since $E_s$ is symmetric, this inequality is equivalent to
\begin{equation*}
        -c\,S_s^{-1}\preceq \dot E_s\preceq c\,S_s^{-1},
        \qquad
        c:=\left(1+\frac{2b \tau n_{\rm mg}}{\rho}\right)\delta \Gamma .
\end{equation*}
Finally, applying \eqref{lem_Bs_equiv_eq2} and $\Gamma=(1+\theta)\Theta$,
we obtain the desired estimate for $\dot{E_s}$ in \eqref{thm_verifyH3_eq10}.

At last, integrating over $s\in[0,1]$ through \eqref{eq_E1E0} finishes the proof. 
\end{proof}

}


\commentout{
\RGI{Here, \hyperref[asump_M_h2_new]{\textbf{(H3)}} is more difficult,
and we shall show that it will hold for large $t$.} 
First, it is not hard to see that \hyperref[asump_M_h2_new]{\textbf{(H3)}} relies on estimating the spectrum of the matrix 
$$
S^{1/2}_{\star} (\widetilde{S}^{-1}(\bfsigma_h)-S^{-1}(\bfsigma_h)) S^{1/2}_{\star} - 
S^{1/2}_{\star} (\widetilde{S}^{-1}_{\star}(\bfsigma^{\star}_{h,\phi})-S^{-1}_{\star}(\bfsigma^{\star}_{h,\phi})) S^{1/2}_{\star}.
$$
Define a function $\Gamma(\bfsigma_h;\bfx) = \bfx^T S^{1/2}_{\star} (\widetilde{S}^{-1}(\bfsigma_h)-S^{-1}(\bfsigma_h)) S^{1/2}_{\star}\bfx$.
For each given unit vector $\bfx$, there exists a Lipschitz constant $\hat{K}_1$ such that
\begin{equation}
\label{append_assump_verifyH3_eq3}
\Gamma(\bfsigma_h;\bfx) - \Gamma(\bfsigma^{\star}_{h,\phi};\bfx) \le \hat{K}_1 \| \sigma_h - \sigma^{\star}_{h,\phi} \|_{L^{\infty}},
\end{equation}
where the constant $\hat{K}_1$ depends on the bound of $\partial_{\sigma_{T_i}}\Gamma(\bfsigma_h;\bfx)$.
Without loss of generality, we only need to estimate the spectrum of $S^{1/2}_{\star} \partial_t (\widetilde{S}^{-1}(t)-S^{-1}(t)) S^{1/2}_{\star}$, 
which is equivalent to the one of $\partial_t (\widetilde{S}^{-1}(t)-S^{-1}(t)) S_{\star}$.
This demands the following estimate 
\begin{equation}
\label{append_assump_verifyH3_eq4}
\partial_t (\widetilde{S}^{-1}(t)-S^{-1}(t)) \preccurlyeq \hat{K}_2 \delta_{\max} S^{-1}_{\star}
\end{equation}
for some constant $\hat{K}_2$ to be specified.
Based on \eqref{append_assump_verifyH3_eq4}, as $S_{\star}$ is SPD, by classic results, we know 
$$
\lambda(\partial_t (\widetilde{S}^{-1}(t)-S^{-1}(t))  S_{\star}) \le \hat{K}_2 \delta_{\max} \lambda_{\min}(S^{-1}_{\star}S_{\star}) \le \hat{K}_2 \delta_{\max} .
$$
Then, it yields $\hat{K}_1 = \hat{K}_2 \delta_{\max}$, which finishes the proof:
\begin{equation}
\label{append_assump_verifyH3_eq6}
\Gamma(\bfsigma_h;\bfx) - \Gamma(\bfsigma^{\star}_{h,\phi};\bfx) \le \hat{K}_2 \delta_{\max} \gamma \| \sigma_h - \sigma^{\star}_{h,\phi} \|_{L^{2}(\Omega)},
\end{equation}
~\\
Now, we focus on \eqref{append_assump_verifyH3_eq4}.
By the derivative formula for matrices, we have
\begin{equation}
\label{append_assump_verifyH3_eq5}
\partial_t {S}^{-1}(t) = {S}^{-1} (\partial_t {S} ) {S}^{-1}
= {S}^{-1} B (\partial_t {M}^{-1} ) B^T {S}^{-1}
= {S}^{-1} B{M}^{-1} (\partial_t {M} ) {M}^{-1} B^T {S}^{-1}.
\end{equation}
\RGI{Using \eqref{append_assump_verify_eq0_1},
we know that $\lim_{t\rightarrow \infty } \partial_t {M} =0$, 
and thus for any $\delta_{\max}$, there exists a constant $\hat{K}_3$ such that
$\partial_t {M}  \preccurlyeq \hat{K}_3 \delta_{\max} M_{\star}$.
Then, Assumption \hyperref[asump_M_h1]{\textbf{(H1)}} leads to 
$\partial_t {M} \preccurlyeq \hat{K}_3 \delta_{\max} (1+\theta)M$,
and this estimate together with \eqref{append_assump_verifyH3_eq5} implies
$\partial_t {S}^{-1} \preccurlyeq  \hat{K}_3 (1+\theta)  \delta_{\max} S^{-1} \preccurlyeq \hat{K}_3 (1+\theta)^2  \delta_{\max}  S^{-1}_{\star}$.
It gives the estimate of $\partial_t {S}^{-1} $ in \eqref{append_assump_verifyH3_eq4}.
In addition, as for $\widetilde{S}$ in \eqref{append_assump_verifyH3_eq4}, notice that it is constructed by MG methods,
and thus we can express it as
$\widetilde{S} = \sum_{i=1}^LI_i R_i I_i^T$ with $I_i$ being a natural inclusion and $R_i$ being a smoother.
For widely-used Jacobi or Gauss-Seidel smoothers, we all have $\partial_t R_i = 0$.
With a similar argument above, we still have the similar estimate.}
}



\begin{thebibliography}{10}

\bibitem{2023AguiarFerreiraPrudente}
{\sc A.~A. Aguiar, O.~P. Ferreira, and L.~F. Prudente}, {\em Inexact gradient
  projection method with relative error tolerance}, Comput. Optim. Appl., 84
  (2023), pp.~363--395.

\bibitem{1987Amaya}
{\sc J.~Amaya}, {\em A differential equations approach to function
  minimization}, Rev. Mat. Apl., 4 (1987), pp.~1--7.

\bibitem{2005AndreaniBirginMartnez}
{\sc R.~Andreani, E.~G. Birgin, J.~M. Mart{\'\i}nez, and J.~Yuan}, {\em
  Spectral projected gradient and variable metric methods for optimization with
  linear inequalities}, IMA J. Numer. Anal., 25 (2005), pp.~221--252.

\bibitem{1989Antipin}
{\sc A.~Antipin}, {\em Continuous and iterative processes with projection and
  projection-type operators}, Problems in Cybernetics: Computational problems
  in the analysis of large systems,  (1989), pp.~5--43.

\bibitem{2003Antipin}
{\sc A.~S. Antipin}, {\em Minimization of convex functions on convex sets by
  means of differential equations}, Differ. Equ., 30 (1994).

\bibitem{2006Bacuta}
{\sc C.~Bacuta}, {\em A unified approach for {U}zawa algorithms}, SIAM J.
  Numer. Anal., 44 (2006), pp.~2633--2649.

\bibitem{1976Bertsekas}
{\sc D.~P. Bertsekas}, {\em On the {G}oldstein-{L}evitin-{P}olyak gradient
  projection method}, IEEE Trans. Autom. Control., 21 (1976), pp.~174--184.

\bibitem{2009BirginMartnezRaydan}
{\sc E.~G. Birgin, J.~Mart{\'\i}nez, and M.~Raydan}, {\em Spectral projected
  gradient methods}, Enc. Optim., 2 (2009).

\bibitem{2000BirginMartnezRaydan}
{\sc E.~G. Birgin, J.~M. Mart{\'\i}nez, and M.~Raydan}, {\em Nonmonotone
  spectral projected gradient methods on convex sets}, SIAM J. Optim., 10
  (2000), pp.~1196--1211.

\bibitem{2003BirginMartnezRaydan}
{\sc E.~G. Birgin, J.~M. Mart{\'\i}nez, and M.~Raydan}, {\em Inexact spectral
  projected gradient methods on convex sets}, IMA J. Numer. Anal., 23 (2003),
  pp.~539--559.

\bibitem{2014BirginMartnezRaydan}
{\sc E.~G. Birgin, J.~M. Mart{\'\i}nez, and M.~Raydan}, {\em Spectral projected
  gradient methods: Review and perspectives}, J. Stat. Softw., 60 (2014),
  pp.~1--21.

\bibitem{2013BoffiBrezziFortin}
{\sc D.~Boffi, F.~Brezzi, and M.~Fortin}, {\em {Mixed finite element methods
  and applications}}, vol.~44 of Springer Series in Computational Mathematics,
  Springer, Heidelberg, 2013.

\bibitem{1978Botsaris}
{\sc C.~Botsaris}, {\em Differential gradient methods}, J. Math. Anal. Appl.,
  63 (1978), pp.~177--198.

\bibitem{1981Botsaris}
\leavevmode\vrule height 2pt depth -1.6pt width 23pt, {\em A class of
  differential descent methods for constrained optimization}, J. Math. Anal.
  Appl., 79 (1981), pp.~96--112.

\bibitem{2023Boumal}
{\sc N.~Boumal}, {\em An introduction to optimization on smooth manifolds},
  Cambridge University Press, 2023.

\bibitem{1997BramblePasciakVassilev}
{\sc J.~H. Bramble, J.~E. Pasciak, and A.~T. Vassilev}, {\em Analysis of the
  inexact {U}zawa algorithm for saddle point problems}, SIAM J. Numer. Anal.,
  34 (1997), pp.~1072--1092.

\bibitem{2000BramblePasciakVassilev}
\leavevmode\vrule height 2pt depth -1.6pt width 23pt, {\em Uzawa type
  algorithms for nonsymmetric saddle point problems}, Math. Comput., 69 (2000),
  pp.~667--689.

\bibitem{1990BramblePasciakXu}
{\sc J.~H. Bramble, J.~E. Pasciak, and J.~Xu}, {\em Parallel multilevel
  preconditioners}, Math. Comp., 55 (1990).

\bibitem{1989BrownBartholomew}
{\sc A.~A. Brown and M.~C. Bartholomew-Biggs}, {\em {ODE} versus {SQP} methods
  for constrained optimization}, J. Optim. Theory Appl., 62 (1989),
  pp.~371--386.

\bibitem{2024ChenGuoWei}
{\sc L.~Chen, R.~Guo, and J.~Wei}, {\em Transformed primal-dual methods with
  variable-preconditioners}, SIAM J. Sci. Comput.,  (2026).

\bibitem{chen2023transformed}
{\sc L.~Chen and J.~Wei}, {\em Transformed primal-dual methods for nonlinear
  saddle point systems}, J. Numer. Math.,  (2023).

\bibitem{chen1998global}
{\sc X.~Chen}, {\em Global and superlinear convergence of inexact {Uzawa}
  methods for saddle point problems with nondifferentiable mappings}, SIAM J.
  Numer. Anal., 35 (1998), pp.~1130--1148.

\bibitem{1994ChenZhangxin}
{\sc Z.~Chen}, {\em {BDM} mixed methods for a nonlinear elliptic problem}, J.
  Comput. Appl. Math., 53 (1994), pp.~207--223.

\bibitem{cheng2003inexact}
{\sc X.-L. Cheng and J.~Zou}, {\em An inexact {U}zawa-type iterative method for
  solving saddle point problems}, Int. J. Comput. Math., 80 (2003), pp.~55--64.

\bibitem{1998ClarkeLedyaevStern}
{\sc F.~H. Clarke, Y.~S. Ledyaev, and R.~J. Stern}, {\em Asymptotic stability
  and smooth lyapunov functions}, J. Differential Equations, 149 (1998),
  pp.~69--114.

\bibitem{2014DevolderGlineurNesterov}
{\sc O.~Devolder, F.~Glineur, and Y.~Nesterov}, {\em First-order methods of
  smooth convex optimization with inexact oracle}, Math. Program., 146 (2014),
  pp.~37--75.

\bibitem{2025PaulJesusKonstantinos}
{\sc P.~Dobson, J.~M. Sanz-Serna, and K.~C. Zygalakis}, {\em On the connections
  between optimization algorithms, lyapunov functions, and differential
  equations: Theory and insights}, SIAM J. Optim., 35 (2025), pp.~537--566.

\bibitem{1981Dunn}
{\sc J.~C. Dunn}, {\em Global and asymptotic convergence rate estimates for a
  class of projected gradient processes}, SIAM J. Control Optim., 19 (1981),
  pp.~368--400.

\bibitem{1983Dykstra}
{\sc R.~L. Dykstra}, {\em An algorithm for restricted least squares
  regression}, J. Am. Stat. Assoc., 78 (1983), pp.~837--842.

\bibitem{1994EvtushenkoZhadan}
{\sc Y.~G. Evtushenko and V.~G. Zhadan}, {\em Stable barrier-projection and
  barrier-newton methods in linear programming}, Comput. Optim. Appl., 3
  (1994), pp.~289--303.

\bibitem{2022FerreiraLemesPrudente}
{\sc O.~P. Ferreira, M.~Lemes, and L.~F. Prudente}, {\em On the inexact scaled
  gradient projection method}, Comput. Optim. Appl., 81 (2022), pp.~91--125.

\bibitem{2021GaoSonAbsilStykel}
{\sc B.~Gao, N.~T. Son, P.-A. Absil, and T.~Stykel}, {\em Riemannian
  optimization on the symplectic stiefel manifold}, SIAM J. Optim., 31 (2021),
  pp.~1546--1575.

\bibitem{1974Goldstein}
{\sc A.~Goldstein}, {\em On gradient projection}, in Proc. 12th Ann. Allerton
  Conference and Circuits and Systems, Allerton Park, IL, 1974, pp.~38--40.

\bibitem{1964Goldstein}
{\sc A.~A. Goldstein}, {\em Convex programming in {H}ilbert space}, Bull. Amer.
  Math. Soc., 70 (1964).

\bibitem{2009GomesSantos}
{\sc M.~A. Gomes-Ruggiero, J.~M. Mart\'{\i}nez, and S.~A. Santos}, {\em
  Spectral projected gradient method with inexact restoration for minimization
  with nonconvex constraints}, SIAM J. Sci. Comput., 31 (2009), pp.~1628--1652.

\bibitem{2022DouglasMaxTiago}
{\sc D.~S. Gon{\c c}alves, M.~L.~N. Gon{\c c}alves, and T.~C. Menezes}, {\em
  Inexact variable metric method for convex-constrained optimization problems},
  Optimization, 71 (2022), pp.~145--163.

\bibitem{2016Han}
{\sc Q.~Han}, {\em Nonlinear Elliptic Equations of the Second Order}, Amer.
  Math. Soc., 2016.

\bibitem{2020PatrickDaniel}
{\sc P.~Henning and D.~Peterseim}, {\em Sobolev gradient flow for the
  {G}ross--{P}itaevskii eigenvalue problem: Global convergence and
  computational efficiency}, SIAM J. Numer. Anal., 58 (2020), pp.~1744--1772.

\bibitem{2006HiptmairWidmerZou}
{\sc R.~Hiptmair, G.~Widmer, and J.~Zou}, {\em {Auxiliary space preconditioning
  in $H_0(curl; \Omega)$}}, Numer. Math., 103 (2006), pp.~435--459.

\bibitem{2020HuLiuWenYuan}
{\sc J.~Hu, X.~Liu, Z.-W. Wen, and Y.-X. Yuan}, {\em A brief introduction to
  manifold optimization}, Journal of the Operations Research Society of China,
  8 (2020), pp.~199--248.

\bibitem{2001HuZou}
{\sc Q.~Hu and J.~Zou}, {\em An iterative method with variable relaxation
  parameters for saddle-point problems}, SIAM J. Matrix Anal. Appl., 23 (2001),
  pp.~317--338.

\bibitem{2002HuZou}
{\sc Q.~Hu and J.~Zou}, {\em Two new variants of nonlinear inexact {U}zawa
  algorithms for saddle-point problems}, Numer. Math., 93 (2002), pp.~333--359.

\bibitem{2007HuZou}
\leavevmode\vrule height 2pt depth -1.6pt width 23pt, {\em Nonlinear inexact
  {U}zawa algorithms for linear and nonlinear saddle-point problems}, SIAM J.
  Optim., 16 (2006), pp.~798--825.

\bibitem{2018HuangChenRui}
{\sc J.~Huang, L.~Chen, and H.~Rui}, {\em Multigrid methods for a mixed finite
  element method of the {D}arcy--{F}orchheimer model}, J. Sci. Comput., 74
  (2018), pp.~396--411.

\bibitem{2012JiangSunToh}
{\sc K.~Jiang, D.~Sun, and K.-C. Toh}, {\em An inexact accelerated proximal
  gradient method for large scale linearly constrained convex {SDP}}, SIAM J.
  Optim., 22 (2012), pp.~1042--1064.

\bibitem{2010PARIMAHMICHAEL}
{\sc P.~KAZEMI and M.~ECKART}, {\em Minimizing the {G}ross-{P}itaevskii energy
  functional with the sobolev gradient --- analytical and numerical results},
  Int. J. Comput. Methods, 07 (2010), pp.~453--475.

\bibitem{1998MuralidharanHanan}
{\sc M.~S. Kodialam and H.~Luss}, {\em Algorithms for separable nonlinear
  resource allocation problems}, Operations Research, 46 (1998), pp.~272--284.

\bibitem{1966LevitinPolyak}
{\sc E.~Levitin and B.~Polyak}, {\em Constrained minimization methods}, USSR
  Comput. Math. \& Math. Phys., 6 (1966), pp.~1--50.

\bibitem{2023Luo}
{\sc H.~Luo}, {\em Accelerated differential inclusion for convex optimization},
  Optimization, 72 (2023), pp.~1139--1170.

\bibitem{2022LuoChen}
{\sc H.~Luo and L.~Chen}, {\em From differential equation solvers to
  accelerated first-order methods for convex optimization}, Math. Program., 195
  (2022), pp.~735--781.

\bibitem{2019May}
{\sc R.~May}, {\em On the convergence of the continuous gradient projection
  method}, Optimization, 68 (2019), pp.~1791--1806.

\bibitem{2004MazencNesic}
{\sc F.~Mazenc and D.~Ne{\v{s}}i{\'c}}, {\em Strong lyapunov functions for
  systems satisfying the conditions of la salle}, IEEE Transactions on
  Automatic Control, 49 (2004), pp.~1026--1030.

\bibitem{2003Nesterov}
{\sc Y.~Nesterov}, {\em Introductory lectures on convex optimization: A basic
  course}, vol.~87, Springer Science \& Business Media, 2003.

\bibitem{2018PatrascuNecoara}
{\sc A.~Patrascu and I.~Necoara}, {\em On the convergence of inexact projection
  primal first-order methods for convex minimization}, IEEE Trans. Autom.
  Control., 63 (2018), pp.~3317--3329.

\bibitem{2015PatrikssonStrmberg}
{\sc M.~Patriksson and C.~Str{\"o}mberg}, {\em Algorithms for the continuous
  nonlinear resource allocation problem---new implementations and numerical
  studies}, European Journal of Operational Research, 243 (2015), pp.~703--722.

\bibitem{1967Polyak}
{\sc B.~Polyak}, {\em A general method for solving extremum problems}, Dokl.
  Akad. Nauk SSSR, 174 (1967).

\bibitem{2017PolyakShcherbakov}
{\sc B.~Polyak and P.~Shcherbakov}, {\em Lyapunov functions: An optimization
  theory perspective}, IFAC, 50 (2017), pp.~7456--7461.
\newblock 20th IFAC World Congress.

\bibitem{1960Rosen}
{\sc J.~B. Rosen}, {\em The gradient projection method for nonlinear
  programming. part i. linear constraints}, Journal of the Society for
  Industrial and Applied Mathematics, 8 (1960), pp.~181--217.

\bibitem{1961Rosen}
\leavevmode\vrule height 2pt depth -1.6pt width 23pt, {\em The gradient
  projection method for nonlinear programming. part ii. nonlinear constraints},
  Journal of the Society for Industrial and Applied Mathematics, 9 (1961),
  pp.~514--532.

\bibitem{2021SanzZygalakis}
{\sc J.~M. Sanz~Serna and K.~C. Zygalakis}, {\em The connections between
  {L}yapunov functions for some optimization algorithms and differential
  equations}, SIAM J. Numer. Anal., 59 (2021), pp.~1542--1565.

\bibitem{1977Scheurer}
{\sc B.~Scheurer}, {\em Existence et approximation de points selles pour
  certains probl{\`e}mes non lin{\'e}aires}, RAIRO. Anal. num{\'e}r., 11
  (1977), pp.~369--400.

\bibitem{2000SchroppSinger}
{\sc J.~Schropp and I.~Singer}, {\em A dynamical systems approach to
  constrained minimization}, Numer. Funct. Anal., 21 (2000), pp.~537--551.

\bibitem{2006Micheal}
{\sc M.~{\'O}. Searc{\'o}id}, {\em Metric Spaces}, Springer-Verlag, 2006.

\bibitem{2022ShiDuJordanSu}
{\sc B.~Shi, S.~S. Du, M.~I. Jordan, and W.~J. Su}, {\em Understanding the
  acceleration phenomenon via high-resolution differential equations}, Math.
  Program., 195 (2022), pp.~79--148.

\bibitem{2009ShikhmanStein}
{\sc V.~Shikhman and O.~Stein}, {\em Constrained optimization: Projected
  gradient flows}, J. Optim. Theory Appl., 140 (2009), pp.~117--130.

\bibitem{2016SuBoydCandes}
{\sc W.~Su, S.~Boyd, and E.~J. Cand`es}, {\em A differential equation for
  modeling {N}esterov's accelerated gradient method: Theory and insights}, J.
  Mach. Learn. Res.,  (2016).

\bibitem{1973TANABE}
{\sc K.~TANABE}, {\em An algorithm for the constrained maximization in
  nonlinear programming}, J. Operations Research Soc. of Japan, 17 (1973).

\bibitem{1980Tanabe}
{\sc K.~Tanabe}, {\em A geometric method in nonlinear programming}, J. Optim.
  Theory Appl., 30 (1980), pp.~181--210.

\bibitem{uzawa1958iterative}
{\sc H.~Uzawa}, {\em Iterative methods for concave programming}, Studies in
  linear and nonlinear programming, 6 (1958), pp.~154--165.

\bibitem{2006WangLiu}
{\sc C.~Wang and Q.~Liu}, {\em Convergence properties of inexact projected
  gradient methods}, Optimization, 55 (2006), pp.~301--310.

\bibitem{2021WilsonRechtJordan}
{\sc A.~C. Wilson, B.~Recht, and M.~I. Jordan}, {\em A {L}yapunov analysis of
  accelerated methods in optimization}, J. Mach. Learn. Res., 22 (2021),
  pp.~1--34.

\bibitem{2017XuZikatanov}
{\sc J.~Xu and L.~Zikatanov}, {\em Algebraic multigrid methods}, Acta Numer.,
  26 (2017), pp.~591--721.

\bibitem{2020XuYouseptZou}
{\sc Y.~Xu, I.~Yousept, and J.~Zou}, {\em An adaptive edge element
  approximation of a quasilinear {H}(curl)-elliptic problem}, Math. Models
  Methods Appl. Sci., 30 (2020), pp.~2799--2826.

\bibitem{1980Hiroshi}
{\sc H.~Yamashita}, {\em A differential equation approach to nonlinear
  programming}, Math. Program., 18 (1980), pp.~155--168.

\bibitem{2026ZengZhanGuoWei}
{\sc J.~Zeng, D.~Zhan, R.~Guo, and C.~Wei}, {\em Variable-preconditioned
  transformed primal--dual method for generalized wasserstein gradient flows},
  J. Comput. Phys., 563 (2026), p.~115103.

\end{thebibliography}

\end{document}